\theoremstyle{plain}
\newtheorem{theorem}{Theorem}[section]
\newtheorem{lemma}[theorem]{Lemma}
\newtheorem{proposition}[theorem]{Proposition}
\newtheorem{Bounded Diameter Lemma}[theorem]{Bounded Diameter Lemma}
\theoremstyle{definition}
\newtheorem{definition}[theorem]{Definition}
\newtheorem{remark}[theorem]{Remark}
\def\dist{{\text{dist}}}
\title{Tame the flexibility of circle patterns}
\author{Ze Zhou}
\address{Institute of Mathematics, Hunan University, Changsha 410082, China}
\email{zhouze@hnu.edu.cn}
\thanks{This work was supported by NSF of China (N0.11601141 and No.11631010)}
\date{}
\begin{document}


\begin{abstract}
This paper proves a deformation circle pattern theorem, which gives a complete description of those circle patterns with interstices in terms of the combinatorial type, the exterior intersections angles and the conformal structures of interstices. As results, the surface version of Rivin's theorem and the approximation property of packable surfaces are obtained.

\bigskip
\noindent{\bf Mathematics Subject Classifications:} 52C26,\,52C25.

\end{abstract}

\maketitle


\section{Introduction}
A circle pattern $\mathcal P$ on a Riemannian surface $(S,\mu)$ is a collection of closed disks. Particularly, if every pair of disks in $\mathcal P$ is either touched or disjoint, we often call $\mathcal P$ a circle packing. The contact graph $G(\mathcal P)$ of $\mathcal P$ is a graph having a vertex for each disk, and having an edge between the vertices $v,u$ for each connected component of $D_v\cap D_u$. Let $V_{\mathcal P}, E_{\mathcal P}$ denote the sets of vertices and edges of $G({\mathcal P})$. For an edge $e=[v,u]\in E_{\mathcal P}$ associated to a component $\mathcal E\subset D_v\cap D_u$, the exterior intersection angle $\Theta(e)$ is supplementary to the angle between the exterior normals of the boundaries $\partial D_v, \partial D_u$ at an intersection point in $\mathcal E$. In addition, for each connected component of the complement of the union of the interiors of the disks in $\mathcal P$, we call it an interstice. Please refer to Stephenson's monograph \cite{Stephenson1} for an exposition on circle patterns.

In this paper we will study the realization and characterization problem of circle patterns on hyperbolic surfaces. Precisely, suppose that $G$ is a graph embedding into a compact oriented surface $S$ of genus $g>1$ and $\Theta: E\mapsto[0,\,\pi)$ is a function defined in the edge set of $G$. Does there exist a circle pattern $\mathcal P$ whose contact graph is $G$ and whose exterior intersection angle function is $\Theta$? And if it does, how to characterize the solution space? For simplicity, we mainly focus on these circle patterns whose interstices correspond to the faces of the graph $G$ bijectively. The following consequence was due to Thurston \cite[Chap. 13]{Thurston}.

\begin{theorem}[Thurston] \label{T-1-1}
  Let  $G$ be the 1-skeleton of a triangulation $\mathcal{T}$ of a compact oriented surface $S$ of genus $g>1$. Assume that $\Theta: E\mapsto [0,\,\pi/2]$ is a function satisfying the conditions below:
 \begin{itemize}
 \item[$(i)$] If the edges $e_1,e_2,e_3$  form a null-homotopic closed path in $S$, then $\sum_{i=1}^3 \Theta(e_i)<\pi$;
\item[$(ii)$] If the edges $e_1,e_2,e_3,e_4$ form a null-homotopic closed path in $S$, then $\sum_{i=1}^4 \Theta(e_i)<2\pi$.
 \end{itemize}
 Then there exists a hyperbolic metric $\mu$ on $S$ such that $(S,\mu)$ supports a circle pattern $\mathcal P$ with the contact graph $G$ and the exterior intersection angles given by $\Theta$.
 Moreover, the pair $({\mu} ,\mathcal P)$ is unique up to isometry.
 \end{theorem}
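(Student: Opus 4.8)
The plan is to reduce the realization problem to the solvability of a finite system of equations in the circle radii, and then to solve that system by a convex variational argument.

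\textbf{Reduction to radii.} Fix the combinatorics $G=\mathcal T^{(1)}$ and the angle data $\Theta$. A candidate pattern is encoded by a radius function $r\colon V\to(0,\infty)$, one positive hyperbolic radius $r_v$ per disk. Given $r$ and $\Theta$, I realize each face $f=\{i,j,k\}$ of $\mathcal T$ as a hyperbolic triangle $T_f(r)$ whose vertices are the three circle centers, the length of the side $[i,j]$ being prescribed by the hyperbolic law of cosines
\begin{equation*}
  \cosh\ell_{ij}=\cosh r_i\cosh r_j+\cos\Theta([i,j])\,\sinh r_i\sinh r_j .
\end{equation*}
Condition $(i)$, $\sum_{\text{edges of }f}\Theta<\pi$, is exactly what guarantees that these three lengths obey the strict triangle inequalities, so $T_f(r)$ is a genuine nondegenerate hyperbolic triangle for every admissible $r$. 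Gluing the $T_f(r)$ along shared edges (the length $\ell_{ij}$ depends only on the edge, not the incident face) produces a hyperbolic cone metric $\mu_r$ on $S$ with cone points at the vertices; the disks of radius $r_v$ then form a pattern with contact graph $G$ and angles $\Theta$, and $\mu_r$ is a \emph{smooth} hyperbolic metric precisely when the cone angle at every vertex equals $2\pi$.

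\textbf{Curvature and the variational functional.} Let $\beta^f_v(r)$ be the interior angle of $T_f(r)$ at $v$, let $\Phi_v(r)=\sum_{f\ni v}\beta^f_v(r)$ be the cone angle at $v$, and define the discrete curvature
\begin{equation*}
  K_v(r)=2\pi-\Phi_v(r),
\end{equation*}
so the goal is to solve $K_v\equiv 0$. I pass to the coordinates $u_v=\log\tanh(r_v/2)\in(-\infty,0)$ and invoke two classical facts about hyperbolic triangles. First, the angle Jacobian $\big(\partial\beta^f_v/\partial u_w\big)$ of a single triangle is symmetric (a Schl\"afli-type identity); hence $\big(\partial\Phi_v/\partial u_w\big)$ is symmetric, the one-form $\omega=\sum_v K_v\,du_v$ is closed on the convex domain of admissible coordinates (which, by $(i)$, is the full cube $(-\infty,0)^V$), and therefore exact, $\omega=d\mathcal F$ for a smooth $\mathcal F$. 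Second, $\big(\partial\Phi_v/\partial u_w\big)$ is negative definite, so the Hessian $\big(-\partial\Phi_v/\partial u_w\big)$ of $\mathcal F$ is positive definite and $\mathcal F$ is \emph{strictly convex}. Strict convexity is the engine for both existence and uniqueness.

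\textbf{Existence, and the main obstacle.} A strictly convex function has at most one critical point, and it attains a (necessarily unique) minimum if and only if it is proper; at that minimum $\nabla\mathcal F=(K_v)$ vanishes, which is exactly the equation I want. Properness is where conditions $(i)$ and $(ii)$ do the real work, and it is the step I expect to be hardest. I must rule out the boundary degenerations $u_v\to 0^-$ (some radius $\to\infty$) and $u_v\to-\infty$ (some radius $\to 0$) by showing that $\mathcal F\to+\infty$ along any such escape. The mechanism is that when a subset $A\subset V$ of radii degenerates, the partial curvatures $K_v$ for $v\in A$ acquire a definite sign forced by the geometry of very thin and very fat hyperbolic triangles, and the two combinatorial inequalities — $(i)$ controlling $3$-cycles and $(ii)$ excluding the "square" degeneration around a $4$-cycle in the link of $A$ — are precisely what convert this into a coercivity bound. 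A Bounded Diameter Lemma of the usual kind then keeps minimizing sequences in a compact part of radius space, so a minimizer $r^\ast$ exists and yields the metric $\mu=\mu_{r^\ast}$ together with the pattern $\mathcal P$.

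\textbf{Uniqueness.} Uniqueness up to isometry is immediate from strict convexity: the condition $K\equiv 0$ reads $\nabla\mathcal F=0$, and a strictly convex function has a unique critical point, so the radius function, and hence $(\mu,\mathcal P)$ up to isometry, is unique. Equivalently, one can argue by a maximum principle: given two solutions $r$ and $r'$, examine a vertex maximizing $r_v-r'_v$ and use the monotonicity of each angle $\beta^f_v$ in the radii to derive a contradiction unless $r\equiv r'$.
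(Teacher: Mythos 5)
Your route is genuinely different from the paper's. The paper (following Thurston; see its Section 3, where the argument is given for the more general Theorem \ref{T-1-2}) runs a continuity method: the curvature map $Th:\mathbb{R}_+^{|V|}\to\mathbb{R}^{|V|}$ is shown to be continuous, injective (via the maximum principle, Lemma \ref{L-2-3}) and proper onto the convex set $Y$ cut out by \eqref{E-6}--\eqref{E-7}, so invariance of domain gives $Th(\mathbb{R}_+^{|V|})=Y$, and existence reduces to checking $O\in Y$. You instead integrate the curvature one-form into a strictly convex potential $\mathcal{F}$ on $(-\infty,0)^{V}$. That part of your argument is sound, and its ingredients are exactly the paper's local lemmas: the symmetry and sign/diagonal-dominance conditions you invoke are Lemma \ref{L-2-2} $\langle a\rangle$, $\langle b\rangle$ written in the coordinates $u_v=\log\tanh(r_v/2)$, and strict convexity gives uniqueness in one line where the paper uses Lemma \ref{L-2-3}. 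This is the Colin de Verdi\`ere/Chow--Luo variational picture; what it buys is a cleaner rigidity statement and a minimization problem in place of a degree-theoretic argument.

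The genuine gap is in existence: properness of $\mathcal{F}$ is asserted, not proved, and that is where essentially all of the work in this theorem lives. To show $\mathcal{F}\to+\infty$ along every degeneration you need, first, the boundary limits of the angles (the paper's Lemma \ref{L-2-4}), and second, the statement that for every nonempty $V_0\subset V$ whose radii tend to $0$ the limiting gradient pairs negatively with the escape direction, which is precisely the paper's inequality
\[
\sum_{(e,v)\in Lk(V_0)}\big(\Theta(e)-\pi\big)+2\pi\chi\big(CK(V_0)\big)\,<\,0 .
\]
Your sketch claims conditions $(i)$ and $(ii)$ "are precisely what convert this into a coercivity bound", but that implication is itself the nontrivial combinatorial step, and it concerns \emph{all} subsets $V_0$, not just $3$- and $4$-cycles in a link: $CK(V_0)$ can have arbitrary topology, and one must separate the cases $\chi(CK(V_0))\le 0$ (automatic), $CK(V_0)$ simply connected with boundary path of length $s\ge 5$ (here the hypothesis $\Theta\le\pi/2$ is indispensable, since each term $\Theta(e)-\pi\le-\pi/2$ must absorb the $+2\pi$), and boundary path of length $3$ or $4$, which is the only place $(i)$ and $(ii)$ enter. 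Your write-up never uses $\Theta\le\pi/2$ after the setup, which is a sign this case analysis is missing; without it the coercivity claim is unsupported. Two smaller omissions: having found the critical radius vector you must still verify that the resulting disks intersect only as prescribed, i.e.\ that the contact graph is exactly $G$ (the paper flags this as the ``$G$-type'' verification at the end of its proof of Theorem \ref{T-1-2}); and for uniqueness you need that \emph{every} pair $(\mu,\mathcal P)$ realizing $(G,\Theta)$ arises from your gluing construction, so that it corresponds to a critical point of $\mathcal{F}$ at all.
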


 It is of interest to relax the requirement of non-obtuse angles in the above theorem. Recently, this was resolved by Zhou \cite{Zhou}. Before stating the result, let us explain some terminologies.

 For a circle pattern $\mathcal P$ on a Riemannian surface $(S,\mu)$, the \textbf{primitive contact graph} $G_{\bigtriangleup}(\mathcal P)$ of $\mathcal P$ is obtained from the contact graph $G(\mathcal P)$, by removing every edge whose intersection component is contained in a third closed disk in $\mathcal P$ (see FIGURE 1). Namely, an edge $e=[v,u]\in E_\mathcal P$ appears in $G_{\bigtriangleup}(\mathcal P)$ if and only if there exists no vertex $w\in V_\mathcal P\setminus\{v,u\}$  such that $\mathcal E\subset D_w$, where $\mathcal E\subset D_v\cap D_u$ denotes the intersection component for $e$.

\begin{figure}[htbp]\centering
\includegraphics[width=0.51\textwidth]{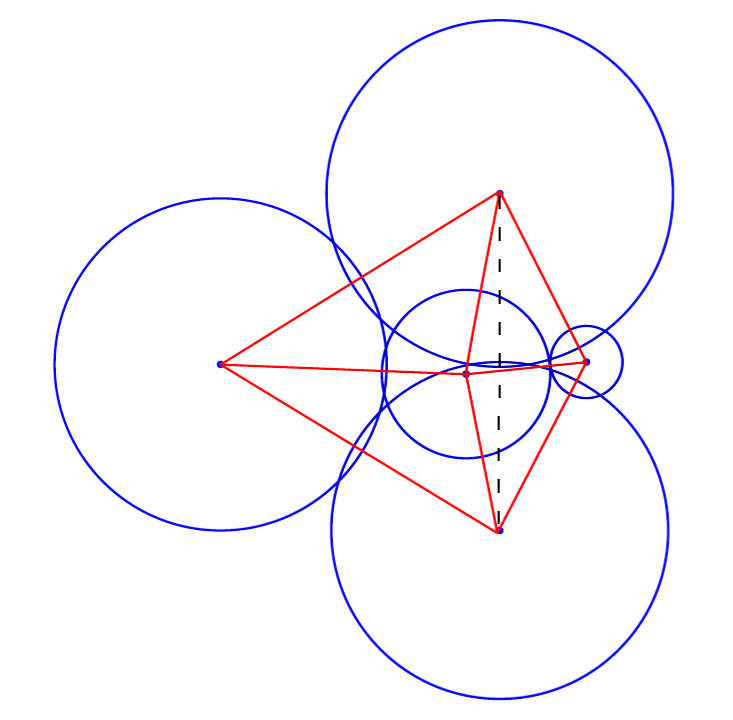}
\caption{The dashed edge is removed}
\end{figure}

A closed (not necessarily simple) path $\gamma$ in a compact oriented surface $S$ is called \textbf{pseudo-Jordan}, if $S\setminus \{\gamma\}$ has a simply-connected component with boundary $\gamma$.
Below is the result of Zhou \cite{Zhou}. We mention that a weaker version of this theorem can be derived from  Schenker's work \cite{Schl}.

 \begin{theorem}\label{T-1-2}
 Let $G$ be the 1-skeleton of a triangulation $\mathcal{T}$ of a compact oriented surface $S$ of genus $g>1$. Assume that $\Theta:E\mapsto [0,\,\pi)$ is a function satisfying
 \[
\sum\nolimits_{i=1}^s \Theta(e_i) \, <\, (s-2)\pi,
 \]
 whenever $e_1,e_2,\cdots,e_s$  form a \textbf{pseudo-Jordan} path in $S$. Then there exists a hyperbolic metric $\mu$ on $S$ such that $(S,\mu)$  supports a circle pattern $\mathcal P$ with the {\it\textbf{primitive contact graph}} $G$ and the exterior intersection angles given by $\Theta$. Moreover, the pair $(\mu,\mathcal P)$ is unique up to isometry.
\end{theorem}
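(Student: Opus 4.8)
The plan is to prove both halves of the statement by the variational (discrete-curvature) method. First I would introduce geometric coordinates: to each vertex $v$ attach the hyperbolic radius $r_v\in(0,\infty)$ of the disk $D_v$ and pass to a logarithmic variable $u_v$ (for instance $u_v=\log\tanh(r_v/2)$). With the angles $\Theta(e)$ held fixed, each face $f=\{v_i,v_j,v_k\}$ of $\mathcal T$ determines, through its three circles, a curvilinear interstice, which is nonempty exactly when $\Theta(e_{ij})+\Theta(e_{jk})+\Theta(e_{ki})<\pi$ (the $s=3$ case of the hypothesis); on this locus the three centers span a nondegenerate geodesic triangle whose side lengths obey the hyperbolic law of cosines $\cosh\ell_{ij}=\cosh r_i\cosh r_j+\sinh r_i\sinh r_j\cos\Theta(e_{ij})$. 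Writing $\theta_v^f(u)$ for its inner angle at $v$, a valid circle pattern realizing $G$ is the same as a point $u$ at which the discrete curvature $K_v(u):=2\pi-\sum_{f\ni v}\theta_v^f(u)$ vanishes for every $v$: the interstices then glue into a genuine, cone-point-free hyperbolic surface carrying $\mathcal P$. Thus the theorem reduces to existence and uniqueness of a zero of $K=(K_v)$.

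Next I would exhibit the variational structure. The cornerstone is the symmetry $\partial\theta_i^f/\partial u_j=\partial\theta_j^f/\partial u_i$, a Schl\"afli-type identity for the decorated hyperbolic triangle; it makes the $1$-form $\sum_v K_v\,du_v$ closed, so on the realizability domain there is a potential $\mathcal H$ with $\nabla\mathcal H=K$. Uniqueness then follows from strict convexity: for a single triangle the matrix $(\partial\theta_i^f/\partial u_j)$ is symmetric and negative definite, hence $(\partial K_v/\partial u_w)=-\sum_f(\partial\theta^f/\partial u)$ is positive definite and $\mathcal H$ is strictly convex. Because a hyperbolic structure carries no scaling freedom, a critical point of $\mathcal H$ is unique when it exists, giving the pair $(\mu,\mathcal P)$ uniquely up to isometry. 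To make this convexity usable globally I would extend each $\theta_v^f$ continuously past the triangle-degeneration walls, following Bobenko--Pinkall--Springborn and Luo, obtaining a $C^1$ convex extension of $\mathcal H$ to the whole configuration space whose critical points coincide with the genuine solutions.

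The existence half is where the real work lies, and I expect the coercivity (properness) of $\mathcal H$ to be the main obstacle. Once $\mathcal H$ is a $C^1$ convex functional, a zero of $K$ is exactly an interior minimizer, and such a minimizer exists provided $\mathcal H$ blows up as $u$ approaches the boundary of its domain, that is, as some radii tend to $0$ or to $\infty$. To control these degenerations I would partition the vertices according to whether their radii blow up, stay bounded, or collapse, and compute the limiting inner angles $\theta_v^f$; the limiting value of the relevant directional derivative of $\mathcal H$ then collapses to an angle sum taken along the closed path bounding the blown-up region, which is pseudo-Jordan precisely when that region is simply connected. The hypothesis $\sum_i\Theta(e_i)<(s-2)\pi$ over pseudo-Jordan paths is exactly the assertion that every such limiting derivative is strictly positive, forcing $\mathcal H\to+\infty$ in every degenerating direction, hence properness and a minimizer $u^\ast$ with $K(u^\ast)=0$. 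Building the exact dictionary between degeneration patterns and pseudo-Jordan paths, and checking that the obtuse-angle regime introduces no additional, unaccounted-for degenerations, is the step that will demand the most care.

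Finally I would verify that the pattern produced has \emph{primitive} contact graph equal to $G$. The gluing guarantees that disks indexed by adjacent vertices of $G$ meet at the prescribed angle and that the faces of $\mathcal T$ are realized as interstices; since each face satisfies $\sum\Theta<\pi$, these interstices are nonempty, so every edge of $G$ survives in $G_{\bigtriangleup}(\mathcal P)$. Any further intersections created by obtuse angles have their lens contained in a third disk and are therefore deleted in passing to the primitive contact graph, so $G_{\bigtriangleup}(\mathcal P)=G$ exactly. Combining the unique minimizer of the convex variational problem with this identification yields both existence and uniqueness up to isometry.
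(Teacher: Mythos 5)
Your overall strategy is sound and genuinely different from the paper's. The paper runs Thurston's continuity method: it forms the curvature map $Th:\mathbb{R}_{+}^{|V|}\to\mathbb{R}^{|V|}$, proves it is continuous, injective (via the maximum principle, Lemma \ref{L-2-3}) and proper, and identifies its image with the convex region $Y$ cut out by (\ref{E-6})--(\ref{E-7}) using invariance of domain (Proposition \ref{P-2-4}); the pseudo-Jordan hypothesis enters only at the end, to verify that the origin satisfies (\ref{E-8}). You instead integrate the curvature: in the coordinates $u_v=\log\tanh(r_v/2)$ the symmetry of Lemma \ref{L-2-2}$\langle a\rangle$ makes $\sum_v K_v\,du_v$ closed on the convex box $(-\infty,0)^{|V|}$, and the sign conditions of Lemma \ref{L-2-2}$\langle b\rangle$ make each per-face angle matrix negative definite by diagonal dominance, so you obtain a strictly convex potential whose critical points are exactly the desired patterns. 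This is the variational scheme of Chow--Luo \cite{Luo1} and Bobenko--Springborn \cite{Boben}; it consumes the same local lemmas as the paper, gets uniqueness for free from strict convexity, and avoids the topological input of invariance of domain, at the price of having to prove coercivity; the paper's route yields strictly more, namely a description of all achievable curvature vectors rather than just the zero vector. One simplification you miss: since Lemma \ref{L-2-1} produces a configuration for \emph{every} radius vector once each face satisfies $\sum\Theta<\pi$, there are no degeneration walls inside the domain, so the $C^1$-extension step you invoke is unnecessary.

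There are, however, two concrete gaps. First, your coercivity claim is false as stated at the $r\to\infty$ walls. In your coordinates these walls $\{u_v=0\}$ lie at \emph{finite} distance, and the gradient of $\mathcal{H}$ is bounded (every angle lies in $[0,\pi)$, so $2\pi-\pi\deg(v)<K_v\le 2\pi$), hence $\mathcal{H}$ extends continuously to those walls and does \emph{not} blow up there; positivity of a limiting directional derivative cannot force $\mathcal{H}\to+\infty$ along a direction that exits the domain in finite time. The standard repair is different: since $\vartheta_v\to 0$ as $r_v\to\infty$ (Lemma \ref{L-2-4}), one has $K_v\to 2\pi>0$ near such a wall, so $\mathcal{H}$ strictly decreases when one moves back into the interior, and the minimizer of the continuous extension cannot sit on a wall; genuine blow-up is needed, and available, only in the directions $u\to-\infty$ (collapsing radii). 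Second, on those collapsing directions your dictionary invokes the hypothesis only when the collapsing region $CK(V_0)$ is simply connected; when it has positive genus or several boundary components, the bounding path is not pseudo-Jordan and the hypothesis says nothing --- there the needed positivity comes instead from $\chi(CK(V_0))\le 0$ together with $\Theta(e)<\pi$, exactly the case split behind (\ref{E-7}) and (\ref{E-8}). Mixed-rate collapses must also be treated; this is the content the paper defers to \cite{Thurston}.

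Finally, your closing sentence --- that any extra lens created in the obtuse regime is contained in a third disk, so that $G_{\bigtriangleup}(\mathcal P)=G$ --- is the statement to be proved, not an argument; it is a nontrivial geometric fact, and the paper itself defers it to \cite{Zhou}. The same caveat applies to uniqueness: to feed an arbitrary realizing pair $(\mu,\mathcal P)$ into your convexity argument you must know that its disk centers span a geodesic triangulation isotopic to $\mathcal{T}$ (the paper's ``$G$-type'' condition), which is again part of what \cite{Zhou} establishes. With these points supplied, your variational argument would constitute a correct alternative proof.
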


One may ask the following question: what if $G$ is generally the 1-skeleton of a cellular decompositions $\mathscr{D}$ of $S$? Considering $G$ as a subgraph of certain triangular graph, the existence follows from Theorem \ref{T-1-2}. Nonetheless, the rigidity may not hold any more. For instance, under the assumption that some $2$-cells of $\mathscr{D}$ are quadrangles, Brooks' results \cite{BK1,BK2} imply that up to isometry there exist uncountable many pairs $(\mu, \mathcal P)$ realizing the same data. Now a natural problem arises: how to describe the moduli space of solutions?


 The first progress was made by Brooks \cite{BK1,BK2}, who gave a description in case that all $2$-cells of $\mathscr{D}$ were either triangles or quadrangles by a kind of continued fraction parameters. For those circle patterns on the Riemann sphere with non-obtuse exterior intersection angles, He-Liu \cite{HL} developed a deformation theory showing that the solution space is identified with the product of the Teichm\"{u}ller spaces of  interstices. See also Huang-Liu \cite{Hu-L} for a related result on convex hyperbolic polyhedra. Aside from these special cases, the problem remains open. In this paper we shall give a complete description for circle patterns on higher genus surfaces.

 Let us introduce a notion as an analogue of the quasiconformal quadrangle. For basic backgrounds on quasiconformal mappings and Teichm\"{u}ller theory, the readers can refer to \cite{Ahl,Ima,LV}. Given a topological polygonal region $I\subseteq \mathbb{D}$ in the hyperbolic disk, we consider all quasiconformal embeddings $h:I\mapsto\mathbb{D}$. Say two such embeddings $h, \tilde{h}: I\mapsto
\mathbb{D}$ are Teichm\"{u}ller equivalent, if the composition
mapping
\[
h\circ \tilde{h}^{-1}: h(I)\mapsto \tilde{h}(I)
\]
 is homotopic to a conformal homeomorphism $\varphi$ which maps
$h(e_{i})$ onto $\tilde{h}(e_{i})$ for each side $e_{i}\subset\partial  I$ of $I$. In other words, there exists a mark-preserving conformal mapping between their images. Here a homeomorphism between two polygonal region is called mark-preserving if the $i$-th side of one region is mapped into the $i$-th side of the other.

 \begin{definition}
  The Teichm\"{u}ller space $T(I)$ of $I$ is the set of  equivalence classes of quasiconformal embeddings $h: I\mapsto \mathbb{D}$.
 \end{definition}

\begin{remark}\label{R-2-5}
The space $T(I)$ parameterizes all the conformal structures on $I$. Moreover, if $I$ is $m$-sided, then $T(I)$ is diffeomorphic to the Euclidean space $\mathbb{R}^{m-3}$. See e.g. \cite{LV}.
\end{remark}

Let $E,F$ be the sets of edges and $2$-cells of $\mathscr{D}$. By Theorem \ref{T-1-1}, there exists at least one hyperbolic metric $\mu^0$ on $S$ such that $(S,\mu^0)$ supports a circle packing $\mathcal P^0$ with contact graph $G$. Suppose that $\big\{\,I_1^0,\cdots,I_{|F|}^0\,\big\}$ are all the interstices of $\mathcal P^0$. Note that each $ I_\alpha^0$ is a polygonal region. We define
\[
T(G(\mathcal P^0))\,=\,\prod\nolimits_{\alpha=1}^{|F|}T(I_\alpha^0).
\]
Due to Remark \ref{R-2-5}, it follows that
\[
T(G(\mathcal P^0))\,\cong\, \mathbb{R}^{2|E|-3|F|}.
\]
Here what the choice $\mathcal P^0$ is to $T(G(\mathcal P^0))$,  that a base surface $S_0$ is to the Teichm\"{u}ller space of compact surfaces. Since different choices give the same topological information, we simply denote it by $T(G)$.

\begin{theorem}\label{T-1-5}
 Let $G$ be the 1-skeleton of a cellular decomposition $\mathscr{D}$ of a compact oriented surface $S$ of genus $g>1$. Assume that $\Theta: E\mapsto [0,\pi)$ is a function such that
\[
\sum\nolimits_{i=1}^s\Theta(e_i)<(s-2)\pi,
\]
whenever the edges $e_1,e_2,\cdots,e_s$  form a \textbf{pseudo-Jordan} path in $S$. Suppose that $T(G)$ is defined as above. For any
\[
\big[h(G)\big]\,=\,\big(\,h_1,h_2,\cdots,h_{|F|}\,\big)\in T(G),
\]
there exists a hyperbolic metric $\mu$ on $S$ such that $(S,\mu)$ supports a circle pattern $\mathcal P$ with the {\it\textbf{primitive contact graph}} $G$, the exterior intersection angles given by $\Theta$, and the interstices equipped with  conformal structures assigned by $[h(G)]$. Moreover, the pair $(\mu,\mathcal P)$ is unique up to isometry.
\end{theorem}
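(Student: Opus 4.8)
The plan is to read the conclusion as the statement that a natural ``record the conformal type of each interstice'' map is a homeomorphism, and to prove this by the continuity method. Write $\mathcal{M}(G,\Theta)$ for the space of pairs $(\mu,\mathcal{P})$ --- a hyperbolic metric together with a circle pattern whose primitive contact graph is $G$ and whose exterior intersection angles are $\Theta$ --- taken up to isometry; by Theorem~\ref{T-1-2} (viewing $G$ as a subgraph of a triangulation of $\mathscr{D}$ and extending $\Theta$ so that the pseudo-Jordan inequalities persist) this space is nonempty. Each interstice $I_\alpha$ of such a $\mathcal{P}$ is a curvilinear $m_\alpha$-gon carrying the conformal structure induced from $(S,\mu)$, hence a well-defined point of $T(I_\alpha)$; recording these defines a map $\Phi:\mathcal{M}(G,\Theta)\to T(G)=\prod_\alpha T(I_\alpha)$, and the theorem is exactly the assertion that $\Phi$ is a bijection, with surjectivity giving existence and injectivity giving uniqueness. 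First I would lift a pattern to the universal cover $\mathbb{H}^2$, where it becomes a $\rho$-equivariant family of circles for a Fuchsian representation $\rho:\pi_1(S)\to\mathrm{Isom}^+(\mathbb{H}^2)$, coordinatized by the radii $\{r_v\}_{v\in V}$ and the shape parameters of the interstices. Using $|V|-|E|+|F|=2-2g$ together with the closing-up (angle-sum) constraint at each vertex, one checks that the radii are cut out to finite freedom while each interstice retains $m_\alpha-3$ shape moduli, so $\mathcal{M}(G,\Theta)$ is a smooth manifold of dimension $\sum_\alpha(m_\alpha-3)=2|E|-3|F|$, matching $\dim T(G)$ by Remark~\ref{R-2-5}. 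When every face is a triangle this dimension is $0$, $T(G)$ is a point, and the statement degenerates to Theorem~\ref{T-1-2}.

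The three analytic inputs are then that $\Phi$ is (1) a local homeomorphism, (2) proper, and (3) globally injective. For (1) the key is that $d\Phi$ is everywhere an isomorphism: an infinitesimal deformation fixing $G$ and $\Theta$ and inducing no first-order change in any interstice modulus must be trivial in $\mathcal{M}(G,\Theta)$. I would express the variation of the Teichm\"{u}ller coordinates of each interstice in terms of the radius variations $\{\dot r_v\}$ through the hyperbolic cosine law governing the circle intersections, producing a linear system whose matrix is a weighted discrete Laplacian attached to $(G,\Theta)$; the strict pseudo-Jordan inequalities are precisely what make this form definite on the tangent space of $\mathcal{M}(G,\Theta)$, so its kernel is trivial and $d\Phi$ is invertible, whence $\Phi$ is a local homeomorphism by the inverse function theorem. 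Making rigorous this passage from the discrete data (radii and prescribed angles) to the continuous conformal moduli of the interstices --- i.e. quantifying how the point of $T(I_\alpha)$ moves as the bounding circles move --- is, I expect, the principal difficulty, since it is exactly the ingredient absent from the purely combinatorial Theorem~\ref{T-1-2}.

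For (2) I would show that if $[h(G)]$ ranges over a compact subset of $T(G)$ then the corresponding patterns cannot escape to infinity: no radius may tend to $0$ or $\infty$ and no interstice may collapse toward the boundary of its moduli space. Here the pseudo-Jordan inequalities play the role of Thurston's conditions, bounding the geometry of every flower and every interstice uniformly away from degeneration and thereby keeping the developing data and the holonomy $\rho$ in a compact family up to conjugacy; this forces $\Phi$ to be proper. Combining (1) and (2), $\Phi$ is a proper local homeomorphism, so its image is simultaneously open and closed, and since $T(G)\cong\mathbb{R}^{2|E|-3|F|}$ is connected and $\mathcal{M}(G,\Theta)$ is nonempty, $\Phi$ is surjective and a covering map; this already yields the existence of $(\mu,\mathcal{P})$ realizing any prescribed $[h(G)]$.

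Finally, for (3) the same definite Laplacian supplies a global maximum-principle argument: given two patterns with identical interstice conformal structures, comparing their radius functions vertex by vertex and invoking the sign of the weights forces the radii, and then the holonomies, to agree, so the two patterns are isometric. Global injectivity together with the covering property upgrades $\Phi$ to a homeomorphism onto the simply connected $T(G)$, which delivers both the existence and the uniqueness in the statement. The two steps I expect to demand the most care are the nondegeneracy of $d\Phi$ in (1) and the exclusion of boundary degenerations in (2); both rest on extracting quantitative control from the \emph{strict} pseudo-Jordan inequalities, and it is the interface between the combinatorial circle-pattern data and the Teichm\"{u}ller parameters of the interstices that constitutes the genuinely new work here.
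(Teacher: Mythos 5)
Your global architecture --- a map $\Phi$ from the moduli space of patterns realizing $(G,\Theta)$ to $T(G)$, shown to be a proper local homeomorphism and globally injective --- is a legitimate alternative strategy in the spirit of He--Liu's deformation theory, and it differs genuinely from the paper's route. But as written it has two gaps, located exactly at the steps you yourself flag as the ``principal difficulty,'' and neither is a technicality. In step (1) you assert that $d\Phi$ is governed by a ``weighted discrete Laplacian attached to $(G,\Theta)$'' whose definiteness follows from the strict pseudo-Jordan inequalities. No such computation is given, and the attribution is misplaced: once a face has more than three sides, the inner angles at the centers (let alone the Teichm\"{u}ller coordinates of the interstice) are \emph{not} functions of the radii and prescribed angles through any cosine law --- they depend on the full conformal geometry of the curvilinear polygon. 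In the paper the pseudo-Jordan inequalities never yield definiteness of any form; they enter only to rule out degeneration (Proposition \ref{P-4-1} and inequality (\ref{E-8})). The monotonicity that actually drives injectivity is Lemma \ref{L-2-10}, the maximal principle for $m$-circle configurations whose interstices share a conformal structure, and its proof requires genuine univalent-function theory: the quantity $M(\phi;z)$, Schwarz-lemma-type boundary estimates, and the Schiffer--Hawley formula (Lemma \ref{L-9-1}). Your step (3) silently presupposes exactly this lemma; without it, comparing two radius vectors at the maximizing vertex yields nothing. Moreover, you never establish that your space $\mathcal{M}(G,\Theta)$ is a smooth manifold of the claimed dimension, nor that $\Phi$ is differentiable, so the inverse function theorem cannot even be invoked; your step (2) is likewise only a sketch of what the paper proves via the collar theorem and the angle-sum estimate in Proposition \ref{P-4-1}.

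For contrast, the paper sidesteps all local deformation theory. Existence is obtained by filling each prescribed conformal region $h_\alpha(I_\alpha^0)$ with a regular hexagonal circle packing of mesh $1/n$, gluing the resulting contact graphs to $G$ to form a triangulation, applying Theorem \ref{T-1-2}, discarding the auxiliary disks, and passing to a limit: Proposition \ref{P-4-1} (where pseudo-Jordan is used) prevents degeneration, and Rodin--Sullivan's theorem guarantees that the limiting interstices carry exactly the prescribed conformal structures. Uniqueness then follows from the maximal principle Lemma \ref{L-2-10} by the radius-comparison argument you outline. So your proposal identifies the correct global skeleton, but the two ingredients it leaves unproven --- the analytic link between circle data and interstice moduli, and the exclusion of degenerations --- are precisely the substantive content; the first would have to be replaced by, or rederived from, the univalent-function machinery of the paper's appendices before the continuity method could close.
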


\begin{remark}
Note that the Teichm\"{u}ller space of a triangle consists of a point. Thus Theorem \ref{T-1-2} is a corollary of the above result.
\end{remark}

 A circle pattern $\mathcal P$ on $(S,\mu)$ is called \textbf{ideal} if it satisfies the following properties: $(i)$ $\mathcal P$ has the \textbf{primitive contact graph} isomorphic to the 1-skeleton of a cellular decomposition of $S$ whose $2$-cells correspond to the interstices of $\mathcal P$ bijectively; $(ii)$ every interstice of $\mathcal P$ consists of a point. By Theorem \ref{T-1-5}, we will show the following result obtained by Bobenko-Springborn \cite{Boben}. It is closely related to Rivin's theorem on ideal hyperbolic polyhedra \cite{Rivin1}.
\begin{theorem}\label{T-1-7}
 Let $G$ be the 1-skeleton of a cellular decomposition $\mathscr{D}$ of a compact oriented surface $S$ of genus $g>1$. Assume that $\Theta: E\mapsto (0,\,\pi)$ is a function such that the following conditions hold:
 \begin{itemize}
\item[\textbf{(H1)}]When the distinct edges $e_1,e_2,\cdots,e_m$ form the boundary of a $2$-cell of $\mathscr{D}$, $\sum_{l=1}^m \Theta(e_l)=(m-2)\pi$.
\item[\textbf{(H2)}]When the edges $e_1,e_2,\cdots,e_s$ form a \textbf{pseudo-Jordan} path which is not the boundary of a 2-cell of $\mathscr{D}$, $\sum_{l=1}^s \Theta(e_l)<(s-2)\pi$.
\end{itemize}
Then there exists a hyperbolic metric $\mu$  on $S$ such that $(S,\mu)$  supports an\textbf{ ideal}  circle pattern  $\mathcal P$  with the {\it\textbf{primitive contact graph}} $G$ and the exterior intersection angles  given by $\Theta$. Moreover, the pair $(\mu,\mathcal P)$ is unique up to isometry.
\end{theorem}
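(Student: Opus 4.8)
The plan is to deduce the theorem from Theorem \ref{T-1-5} by a degeneration argument in which the genuine interstices provided by that theorem are pinched to points. First I would perturb the angle function: for small $t>0$ set $\Theta_t=\Theta-t$ on every edge. Since $\Theta$ takes values in the open interval $(0,\pi)$ and $E$ is finite, $\Theta_t$ is again $(0,\pi)$-valued once $t<\min_e\Theta(e)$. Condition \textbf{(H2)} is an open condition and is preserved after decreasing each angle, while condition \textbf{(H1)} turns into the strict inequality
\[
\sum\nolimits_{l=1}^{m}\Theta_t(e_l)\,=\,(m-2)\pi-m\,t\,<\,(m-2)\pi
\]
on every face boundary. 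Hence \emph{every} pseudo-Jordan path now satisfies the strict inequality required by Theorem \ref{T-1-5}. Fixing an arbitrary base point $[h(G)]\in T(G)$, Theorem \ref{T-1-5} produces, for each such $t$, a hyperbolic metric $\mu_t$ and a circle pattern $\mathcal P_t$ with primitive contact graph $G$ and exterior intersection angles $\Theta_t$.

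The next step is to quantify the collapse of the interstices. Each interstice $I_f^t$ of $\mathcal P_t$ is an $m$-sided curvilinear polygon whose sides are circular arcs; a direct computation from the definition of $\Theta$ (supplementary to the angle between exterior normals) shows that the interior angle of $I_f^t$ at the corner dual to an edge $e_l$ equals $\Theta_t(e_l)$. Since $I_f^t$ is a topological disk, the Gauss--Bonnet theorem gives
\[
\area(I_f^t)\,=\,\int_{\partial I_f^t}k_g\,ds+(m-2)\pi-\sum\nolimits_{l=1}^{m}\Theta_t(e_l).
\]
Because the boundary arcs bound convex disks, they bend away from $I_f^t$, so their geodesic curvature relative to $I_f^t$ is nonpositive and $\int_{\partial I_f^t}k_g\,ds\le 0$. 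Substituting the value of the angle sum yields $0<\area(I_f^t)\le m\,t$. Thus every interstice has area $O(t)$, and, carrying a fixed conformal shape, it converges to a single point as $t\to 0^{+}$; the circles surrounding each face acquire a common point, which is exactly the ideal condition.

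Then comes compactness. The Gauss--Bonnet theorem for the closed surface fixes the total area $\area(S,\mu_t)=4\pi(g-1)$ independently of $t$, and since the interstitial area tends to $0$ the disks eventually carry essentially all of it, which bounds every radius from above. Together with the fact that the angles remain in a fixed compact subinterval of $(0,\pi)$, I would extract a subsequence along which the radii converge and $\mu_t$ converges in moduli space to a hyperbolic metric $\mu$, with $\mathcal P_t\to\mathcal P$. As the limiting angles $\Theta\in(0,\pi)^{E}$ are nondegenerate, the primitive contact graph of $\mathcal P$ is still $G$, its exterior intersection angles are $\Theta$, and its interstices are the limit points, so $(\mu,\mathcal P)$ is an ideal circle pattern realizing the prescribed data. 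I expect the main obstacle to be precisely this compactness step: one must exclude degeneration of $\mu_t$ in moduli space, i.e.\ establish a uniform \emph{lower} bound on the circle radii (equivalently on the injectivity radius) so that no simple closed geodesic pinches. This two-sided radius bound, forced by the fixed total area together with the uniform angle bounds, is the technical heart of the argument, and it also shows the limit is independent of the auxiliary choice $[h(G)]$, since the vanishing interstitial area annihilates the moduli directions parametrized by $T(G)$.

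Finally, uniqueness. Any ideal pattern realizing $(G,\Theta)$ can be recovered as a limit of the unique solutions furnished by Theorem \ref{T-1-5}, so it suffices to promote this to rigidity. I would argue that the correspondence sending an ideal configuration to its angle function has injective differential --- an infinitesimal rigidity statement provable by a discrete maximum-principle/energy-convexity argument of the same type underlying the uniqueness half of Theorem \ref{T-1-5} --- and combine this local injectivity with the properness established above. Since the admissible parameter set cut out by \textbf{(H1)}--\textbf{(H2)} is connected, a standard continuity/degree argument then upgrades local rigidity to global uniqueness of $(\mu,\mathcal P)$ up to isometry.
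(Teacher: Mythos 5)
Your existence strategy coincides with the paper's own: replace $\Theta$ by $\Theta-\varepsilon$ on every edge, check that \textbf{(H1)} becomes strict so that Theorem \ref{T-1-5} applies, solve for each $\varepsilon$, and pass to a limit. Your Gauss--Bonnet estimate $0<\area(I_f^t)\le mt$ is correct (the interstice angles do equal the exterior intersection angles, and the boundary arcs do have nonpositive geodesic curvature relative to the interstice) and is a cleaner route to interstice collapse than the paper's, which instead assumes the interstices do \emph{not} collapse, extracts a limiting conformal map via Montel/Rouch\'e, and contradicts \textbf{(H1)} by comparing the limiting interstice polygon with its rectified geodesic polygon. The genuine gap is at the step you yourself flag and then wave off: non-degeneration. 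You assert that a uniform lower bound on the radii is ``forced by the fixed total area together with the uniform angle bounds,'' but this cannot be right, because your existence argument never invokes condition \textbf{(H2)} --- and without \textbf{(H2)} the theorem is false, so any correct proof must use it. This is exactly where the paper uses it, via the argument of Proposition \ref{P-4-1}: if the disks over a vertex set $V_0$ collapse and $CK(V_0)$ is not simply connected, a pinching geodesic appears and the collar theorem forces the diameter of $(S,\mu_t)$ to blow up, contradicting the uniform upper bound on radii; if $CK(V_0)$ is simply connected, the disks along the bounding pseudo-Jordan path $e_1,\dots,e_s$ come to share a common point, forcing $\sum_{i}\bigl(\pi-\Theta_t(e_i)\bigr)\to 2\pi$, hence $\sum_i\Theta(e_i)=(s-2)\pi$, contradicting the strict inequality in \textbf{(H2)}. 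Note also the logical order: your ``zero area implies the interstice is a point'' conclusion is valid only \emph{after} the limiting disks are known to be non-degenerate, so this missing step undermines the ideal-pattern conclusion as well as the existence of the limit metric.

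The uniqueness part is also not a proof, and here your route diverges from the paper's to its detriment. The opening claim --- that every ideal pattern realizing $(G,\Theta)$ arises as a limit of the solutions of Theorem \ref{T-1-5} --- is unjustified; it presupposes an ``un-pinching'' deformation statement that is nowhere established and is essentially as hard as the uniqueness itself. The subsequent scheme (infinitesimal rigidity plus properness plus connectedness of the admissible set) is incomplete even granting its ingredients: a proper local homeomorphism over a connected base is only a finite covering, and to get degree one you need simple connectivity or a point with a unique fibre, neither of which you supply. The paper needs none of this: Lemma \ref{L-2-13} extends the maximum principle (Lemma \ref{L-2-10}) and the blow-up behaviour (Lemma \ref{L-2-11}) to ideal configurations, and then uniqueness follows by the same direct global argument as in the rigidity part of Theorem \ref{T-1-5}: if two ideal patterns realized $(G,\Theta)$ with distinct radius vectors $r,\tilde r$, the vertex $v_0$ maximizing $\tanh(r(v)/2)/\tanh(\tilde r(v)/2)>1$ would satisfy $k(v_0)(r)>k(v_0)(\tilde r)$, contradicting $k\equiv 0$ for both. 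This is precisely the ``maximum-principle argument of the same type underlying the uniqueness half of Theorem \ref{T-1-5}'' that you allude to; carried out directly, it makes your degree-theoretic detour unnecessary.
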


A compact hyperbolic surface is called packable if it supports a circle packing whose contact graph is triangular. Applying Theorem \ref{T-1-5}, we will prove a result due to Brooks \cite{BK2}, which asserts that any compact hyperbolic surface can be approximated by packable ones. See also the work of Bowers-Stephenson \cite{BS3}.

\begin{theorem}[Brooks]\label{T-1-8}
 The packable surfaces form a density set in the Teichm\"{u}ller space of $S$.
 \end{theorem}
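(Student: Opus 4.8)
The plan is to realize a dense, explicitly packable subset inside the parameter space furnished by Theorem \ref{T-1-5} and then transport it to the Teichm\"{u}ller space $\mathcal{T}(S)$ by the continuity of the parametrization. First I would fix a cellular decomposition $\mathscr{D}$ of $S$ whose $2$-cells are all quadrilaterals and whose number of faces is $|F|=6g-6$; such a decomposition exists for every $g>1$, and an Euler-characteristic count then gives $|V|=4g-4$ and $|E|=12g-12$, so that $\dim T(G)=2|E|-3|F|=6g-6=\dim\mathcal{T}(S)$. I would take the tangential data $\Theta\equiv 0$, which satisfies the pseudo-Jordan inequalities $\sum_{i=1}^s\Theta(e_i)=0<(s-2)\pi$ as soon as every pseudo-Jordan path has length $s\geq 3$; this holds once $\mathscr{D}$ is chosen as a genuine cell decomposition without monogon or bigon faces. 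Theorem \ref{T-1-5} then assigns to each $[h(G)]\in T(G)$ a hyperbolic metric $\mu$, defining a map $\Phi:T(G)\to\mathcal{T}(S)$.

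Next I would establish that $\Phi$ is a homeomorphism onto all of $\mathcal{T}(S)$. Injectivity is immediate from the uniqueness clause of Theorem \ref{T-1-5}, and continuity follows from the continuous dependence of $(\mu,\mathcal{P})$ on the interstitial conformal data underlying that theorem. Since $T(G)$ and $\mathcal{T}(S)$ are both manifolds of dimension $6g-6$, invariance of domain shows that $\Phi$ is open and a homeomorphism onto its image. To upgrade this to surjectivity I would prove (or extract from the deformation theory behind Theorem \ref{T-1-5}) that $\Phi$ is proper: a sequence of interstitial structures leaving every compact subset of $T(G)$ must force the associated metrics to leave every compact subset of $\mathcal{T}(S)$, and conversely. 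A proper local homeomorphism into the connected, simply connected space $\mathcal{T}(S)$ is a homeomorphism onto it, whence $\Phi(T(G))=\mathcal{T}(S)$. I expect this properness statement, which controls how a degenerating interstitial modulus pinches the metric, to be the main analytic obstacle.

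Next I would identify the packable locus $P\subseteq T(G)$. A quadrilateral interstice $I_\alpha$ has $T(I_\alpha)\cong\mathbb{R}$, parametrized by its conformal modulus. Following Brooks \cite{BK1,BK2}, an interstice bounded by four circular arcs can be completely filled by a chain of mutually tangent circles, turning it into a union of curvilinear triangles, precisely when its modulus admits a suitable continued-fraction expansion, and the set of such moduli is dense in $\mathbb{R}$. Carrying out this filling simultaneously in every interstice leaves the ambient metric unchanged but refines the pattern into a genuine circle packing with triangular primitive contact graph. Thus $P=\prod_\alpha P_\alpha$, with each $P_\alpha\subset T(I_\alpha)$ dense, is dense in $T(G)$, and every metric in $\Phi(P)$ is packable.

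Finally I would combine these ingredients. Given an arbitrary $[\mu^\ast]\in\mathcal{T}(S)$ and $\epsilon>0$, surjectivity of $\Phi$ provides $x^\ast\in T(G)$ with $\Phi(x^\ast)=[\mu^\ast]$; density of $P$ provides $x'\in P$ arbitrarily close to $x^\ast$; and continuity of $\Phi$ makes $\Phi(x')$, which is packable, lie within $\epsilon$ of $[\mu^\ast]$. Hence the packable surfaces are dense in $\mathcal{T}(S)$. The two steps I expect to require genuine work are the properness of $\Phi$, needed for surjectivity, and the verification that Brooks' circle-chain filling yields a dense set of interstitial moduli compatible with the primitive-contact-graph bookkeeping; the remainder is soft topology together with the continuity supplied by Theorem \ref{T-1-5}.
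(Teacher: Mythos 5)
Your argument stands or falls on the claim that $\Phi:T(G)\to \mathcal{T}(S)$ is proper (hence surjective) for a single quadrangulation $G$ fixed in advance, and this is exactly where it breaks. Properness would require: if a sequence of interstitial data leaves every compact subset of $T(G)$, then the associated metrics leave every compact subset of $\mathcal{T}(S)$. But the natural degeneration in $T(G)$ does \emph{not} degenerate the metric. Let the modulus of one quadrilateral interstice tend to infinity while the others stay bounded. The radii of all disks remain bounded above (Lemma \ref{L-2-11}), the hyperbolic area of $(S,\mu_n)$ is constant ($=4\pi(g-1)$ by Gauss--Bonnet), and no short geodesic is created: geometrically, the two ``opposite'' disks of that quadrilateral interstice simply approach tangency, the interstice pinches at a point, and the primitive contact graph of the limit pattern jumps to $G$ plus a diagonal. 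So the metrics $\mu_n$ stay in a compact part of $\mathcal{T}(S)$ while $[h_n]\to\infty$ in $T(G)$; $\Phi$ is not proper, your covering-space argument does not start, and surjectivity of $\Phi$ (equivalently, the assertion that \emph{every} hyperbolic metric supports a packing with the one prescribed contact graph $G$) is left with no support --- indeed it is precisely the kind of statement that fails for circle packings, whose realizable combinatorics depend on the metric. Even the weaker statement you would need, density of $\Phi(T(G))$ in $\mathcal{T}(S)$ for this single $G$, does not follow from anything in the paper. A secondary issue: your step filling a quadrilateral interstice by a tangent chain ``precisely when the modulus has a suitable continued-fraction expansion'' is Brooks' own theorem \cite{BK1,BK2}; importing it makes the proof essentially circular as a proof of Theorem \ref{T-1-8}.

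The paper avoids both problems by letting the combinatorics depend on the target metric rather than fixing it beforehand. Given $\mu\in \mathcal{T}(S)$, one first places \emph{some} circle packing $\mathcal{P}$ on $(S,\mu)$ itself, with contact graph $G=G(\mu)$ the $1$-skeleton of a cellular decomposition; then each interstice is filled, cookie-cutter style, by a regular hexagonal packing of mesh $1/n$, and Thurston's Theorem \ref{T-1-1} applied to the resulting triangular graph produces a packable metric $\mu_n$. Non-degeneration (Proposition \ref{P-4-1}) lets one extract a limit $(\mu_\infty,\mathcal{P}_\infty)$ realizing $G$; the Rodin--Sullivan theorem \cite{RS} forces the interstices of $\mathcal{P}_\infty$ to carry the same conformal structures as those of $\mathcal{P}$; and then the rigidity half of Theorem \ref{T-1-5} --- not any surjectivity statement --- gives $\mu_\infty=\mu$, so $\mu_{n_k}\to\mu$ with every $(S,\mu_{n_k})$ packable. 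If you want to salvage your outline, replace ``fix one quadrangulation $G$ and prove $\Phi$ surjective'' by this metric-adapted choice of $G$; the uniqueness clause of Theorem \ref{T-1-5}, which you correctly identified as available, is then the only input needed from the deformation theorem.
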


The paper is organized as follows. In Section 2, we collect several preparatory results concerning three-circle and multi-circle configurations. Section 3 gives a sketch proof of Theorem \ref{T-1-2}.  In Section 4, we present the proofs of the main results of this paper. Finally, there are three appendixes on univalent functions and multi-circle configurations.

\section{Preliminaries}
\subsection{Three-circle configurations } To prove Circle Pattern theorem, Thurston ever formulated several lemmas on configurations of three circles meeting in non-obtuse angles \cite{Thurston}. Below is a similar result proved by Zhou \cite{Zhou}.
\begin{lemma}\label{L-2-1}
 Given $r_i, r_j, r_k>0$ and three angles $\Theta_i, \Theta_j, \Theta_k\in[0,\pi)$ satisfying
\[
\Theta_i+\Theta_j+\Theta_k\,<\,\pi,
\]
 there exists a configuration of three mutually intersecting closed disks in hyperbolic geometry, unique up to isometry, having radii $r_i, r_j, r_k$  and meeting in exterior intersection angles $\Theta_i, \Theta_j, \Theta_k$.
\end{lemma}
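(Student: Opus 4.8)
The plan is to reduce the lemma to the existence and uniqueness of a hyperbolic triangle formed by the three centers. First I would record the relation between the distance of two centers and the exterior angle at the edge joining them: if disks of radii $r_i,r_j$ meet with exterior intersection angle $\Theta_k$, then applying the hyperbolic law of cosines to the triangle spanned by the two centers and one of their intersection points gives
\[
\cosh d_{ij}=\cosh r_i\cosh r_j+\sinh r_i\sinh r_j\cos\Theta_k,
\]
and cyclically for $d_{jk},d_{ik}$. Since $\Theta\in[0,\pi)$ forces $\cos\Theta\in(-1,1]$, each such $d$ lies strictly between $|r_i-r_j|$ and $r_i+r_j$, so the two circles genuinely meet (tangentially exactly when $\Theta=0$). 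Thus the three center distances are completely and explicitly determined by the data, and the problem becomes: do $d_{ij},d_{jk},d_{ik}$ arise as the side lengths of a hyperbolic triangle? Given such a triangle I would place the centers at its vertices and draw the circles of radii $r_i,r_j,r_k$, so that the pairwise angles are correct by construction. Uniqueness up to isometry is then immediate, because a hyperbolic triangle is determined up to isometry by its three side lengths, and the orientation of the ambient surface removes the remaining reflectional ambiguity.

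The heart of the matter is therefore to show that the hypothesis $\Theta_i+\Theta_j+\Theta_k<\pi$ guarantees the three triangle inequalities among $d_{ij},d_{jk},d_{ik}$. I would argue by monotonicity and continuity. Differentiating the cosine relation shows that $d_{ij}$ is a strictly decreasing function of $\Theta_k$ (its derivative is a negative multiple of $\sin\Theta_k$), so the closed region $\{\Theta_i,\Theta_j,\Theta_k\ge 0,\ \sum\Theta<\pi\}$ can be connected within itself to the mutual–tangency corner $\Theta_i=\Theta_j=\Theta_k=0$, where the side lengths are $r_i+r_j,\ r_j+r_k,\ r_i+r_k$ and the triangle inequalities hold trivially, for instance $r_i+r_k<(r_i+r_j)+(r_j+r_k)$ since $r_j>0$. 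It then suffices to check that no triangle inequality can degenerate to an equality while $\sum\Theta<\pi$. The geometric reason the critical threshold is $\pi$ is transparent: the configuration degenerates precisely when the three circles pass through a common point $P$, and computing the angle of each triangle $O_iPO_j$ at $P$, namely $\pi$ minus the corresponding interior intersection angle, and using that these angles fill the full turn $2\pi$ about $P$, gives $(\pi-\Theta_i)+(\pi-\Theta_j)+(\pi-\Theta_k)=2\pi$, that is $\Theta_i+\Theta_j+\Theta_k=\pi$. Hence the degeneration locus is exactly $\{\sum\Theta=\pi\}$, and the triangle inequalities should hold strictly throughout the open region $\{\sum\Theta<\pi\}$.

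The step I expect to be the main obstacle is making this degeneration analysis fully rigorous, since it is where the hypothesis is genuinely consumed. Concretely it amounts to the trigonometric inequality asserting that the three distances defined by the cosine relation satisfy the triangle inequality whenever $\sum\Theta<\pi$; I would verify it either by promoting the angle–sum–at–a–common–point computation to an honest statement about the sign of the defect $d_{ij}+d_{jk}-d_{ik}$ as a monotone function of the angles, or by a direct computation with the hyperbolic law of cosines that expresses a Gram-type determinant of the configuration and shows it has the correct sign exactly on $\{\sum\Theta<\pi\}$. Particular care is needed along the boundary faces where one or more $\Theta$ vanish, since these mutual tangencies are the tightest cases and must be handled as limits rather than excluded. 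Once the triangle inequalities are secured, the construction and the congruence argument of the first paragraph deliver both existence and uniqueness up to isometry, completing the proof.
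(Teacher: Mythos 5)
First, a point of comparison: this paper does not actually prove Lemma \ref{L-2-1}; it states it as ``a similar result proved by Zhou \cite{Zhou}'' and never reproduces the argument, so your proposal has to stand entirely on its own. Your reduction is sound as far as it goes: the hyperbolic law of cosines in the triangle $O_iPO_j$ (where $P$ is an intersection point and the angle at $P$ is $\pi-\Theta_k$) gives exactly $\cosh d_{ij}=\cosh r_i\cosh r_j+\sinh r_i\sinh r_j\cos\Theta_k$, existence is equivalent to the strict triangle inequalities for $d_{ij},d_{jk},d_{ik}$, and uniqueness up to isometry follows because side lengths determine a hyperbolic triangle.

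The genuine gap is the step you yourself flagged, and the geometric picture you propose to close it with is false. You assert that the triangle of centers degenerates precisely when the three circles pass through a common point, so that the degeneration locus is exactly $\{\Theta_i+\Theta_j+\Theta_k=\pi\}$. Both directions of this equivalence fail. (i) Collinearity does not imply concurrency: take three disks of equal radius $r$ whose centers lie on a geodesic at consecutive distances $r$ and $r$. Then $d_{ik}=d_{ij}+d_{jk}=2r$, so the defect vanishes; the disks pairwise intersect, with $\Theta_j=0$ and $\cos\Theta_i=\cos\Theta_k=-\cosh r/(1+\cosh r)$, so all three angles lie in $[0,\pi)$; yet the outer two circles are tangent at the \emph{center} of the middle circle, so no point lies on all three circles, and $\Theta_i+\Theta_j+\Theta_k>4\pi/3\neq\pi$. (ii) Concurrency does not imply degeneration: three circles of equal radius passing through a common point, with centers in directions $2\pi/3$ apart from that point, satisfy $\Theta_i=\Theta_j=\Theta_k=\pi/3$, hence angle sum exactly $\pi$, while the triangle of centers is a nondegenerate equilateral triangle. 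What shrinks to a point on $\{\sum\Theta=\pi\}$ is the \emph{interstice}, not the triangle of centers; these are two different degenerations, and your continuity argument needs the latter.

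Consequently the chain ``defect $=0$ at some point with $\sum\Theta<\pi$ $\Rightarrow$ the circles are concurrent $\Rightarrow$ $\sum\Theta=\pi$, contradiction'' has no justification, and since example (i) shows that defect-zero configurations with all angles in $[0,\pi)$ genuinely exist (only with $\sum\Theta>\pi$), the hypothesis $\sum\Theta<\pi$ must be consumed by an actual inequality, not by a topological argument alone. What remains to be proven is precisely the quantitative core of the lemma: for all radii and all angles with $\Theta_i+\Theta_j+\Theta_k<\pi$, one has $\cosh(d_{ij}+d_{jk})>\cosh d_{ik}$, i.e.
\[
\cosh d_{ij}\cosh d_{jk}+\sinh d_{ij}\sinh d_{jk}\,>\,\cosh r_i\cosh r_k+\sinh r_i\sinh r_k\cos\Theta_j,
\]
with the $d$'s given by your cosine relations (equivalently, in your Gram-determinant formulation, that the symmetric matrix with diagonal entries $-1$ and off-diagonal entries $-\cosh d_{\tau\sigma}$ has negative determinant). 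Your proposal defers exactly this computation, so the proof is incomplete at its central step.
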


As FIGURE 2, let $\vartheta_i,\vartheta_j,\vartheta_k$ be the inner angles of the triangle of centers of the three disks. Lemma \ref{L-2-2} is a combination of Zhou \cite{Zhou} and Xu \cite{Xu}. See also Chow-Luo \cite{Luo1}, Guo \cite{Guo} for some related results.

\begin{figure}[htbp]\centering
\includegraphics[width=0.75\textwidth]{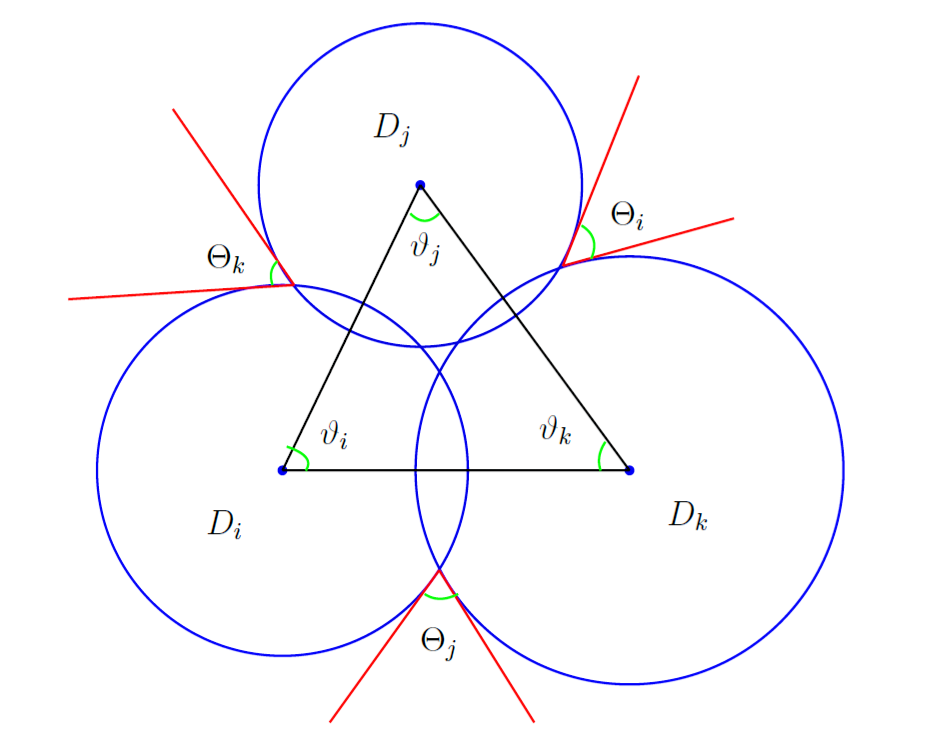}
\caption{A three-circle configuration}
\end{figure}

\begin{lemma}\label{L-2-2}
 Suppose that $\Theta_i, \Theta_j, \Theta_k$  satisfy the conditions of Lemma \ref{L-2-1}. Then
\begin{itemize}
\item[$\langle a\rangle$]$\displaystyle{
\sinh r_j\dfrac{\partial\vartheta_i}{\partial r_j}\,=\,\sinh r_i\dfrac{\partial\vartheta_j}{\partial r_i}};
$
\item[$\langle b\rangle$]$\displaystyle{
\dfrac{\partial\vartheta_i}{\partial r_i}\ <\ 0,\quad \dfrac{\partial\vartheta_j}{\partial r_i}\ > \ 0,\,\quad \dfrac{\partial(\vartheta_i+\vartheta_j+\vartheta_k)}{\partial r_i}\ <\ 0}.
$
\end{itemize}
\end{lemma}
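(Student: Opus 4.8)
The plan is to realize the map $(r_i,r_j,r_k)\mapsto(\vartheta_i,\vartheta_j,\vartheta_k)$ as a composition of two classical cosine-law maps and to differentiate through them. Writing $l_i,l_j,l_k$ for the side lengths of the triangle of centers (with $l_i$ opposite the center of the $i$-th disk), the hyperbolic cosine law for two intersecting circles gives
\[
\cosh l_k=\cosh r_i\cosh r_j+\sinh r_i\sinh r_j\cos\Theta_k ,
\]
and cyclically for $l_i,l_j$; the key structural fact is that $l_k$ depends only on the two radii $r_i,r_j$ whose disks it joins. The inner angles are then recovered from $l_i,l_j,l_k$ by the cosine law of the center triangle, so $\vartheta_a=\vartheta_a(l_i,l_j,l_k)$ and every $\partial\vartheta_a/\partial r_b$ is obtained by the chain rule through the intermediate lengths.

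I would then assemble the two families of building-block derivatives. Differentiating the triangle cosine law yields the standard angle--length derivatives
\[
\frac{\partial\vartheta_i}{\partial l_i}=\frac{\sinh l_i}{A},\qquad
\frac{\partial\vartheta_i}{\partial l_j}=-\frac{\sinh l_i\cos\vartheta_k}{A},\qquad
\frac{\partial\vartheta_i}{\partial l_k}=-\frac{\sinh l_i\cos\vartheta_j}{A},
\]
where $A=\sinh l_i\sinh l_j\sin\vartheta_k$ is symmetric in $i,j,k$ by the law of sines, so that $\sinh l_b\,\partial\vartheta_a/\partial l_b=\sinh l_a\,\partial\vartheta_b/\partial l_a$. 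Differentiating the circle cosine law and simplifying with the cosine law of the sub-triangle spanned by two centers and their intersection point $P$ gives the clean identity $\partial l_k/\partial r_j=\cos\beta_j^{(i)}$, where $\beta_j^{(i)}$ is the angle at the center of disk $j$ in that sub-triangle; equivalently $\sinh r_j\cos\beta_j^{(i)}=(\cosh r_j\cosh l_k-\cosh r_i)/\sinh l_k$.

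For part $\langle a\rangle$ I would pass to the coordinates $u_a=\log\tanh(r_a/2)$, so that $\partial/\partial u_a=\sinh r_a\,\partial/\partial r_a$ and $\langle a\rangle$ becomes the symmetry $\partial\vartheta_i/\partial u_j=\partial\vartheta_j/\partial u_i$, i.e.\ the closedness of the $1$-form $\vartheta_i\,du_i+\vartheta_j\,du_j+\vartheta_k\,du_k$. Substituting the building blocks into the chain rule and eliminating the $\cosh l$'s and $\cosh r$'s via the two cosine laws, the laws of sines in the center triangle, and the law of sines $\sinh r_j\sin\beta_j^{(i)}=\sinh r_i\sin\beta_i^{(j)}$ in each sub-triangle, both $\sinh r_j\,\partial\vartheta_i/\partial r_j$ and $\sinh r_i\,\partial\vartheta_j/\partial r_i$ reduce to one and the same expression symmetric under $i\leftrightarrow j$. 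This step is bookkeeping-heavy but mechanical, and it is the content drawn from \cite{Zhou,Xu}.

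The substance lies in part $\langle b\rangle$. From the chain rule one reads off
\[
\frac{\partial\vartheta_i}{\partial r_i}=-\frac{\sinh l_i}{A}\bigl(\cos\vartheta_k\cos\beta_i^{(k)}+\cos\vartheta_j\cos\beta_i^{(j)}\bigr),\qquad
\frac{\partial\vartheta_j}{\partial r_i}=\frac{\sinh l_j}{A}\bigl(\cos\beta_i^{(k)}-\cos\vartheta_i\cos\beta_i^{(j)}\bigr),
\]
so the two sign claims amount to positivity of the bracketed quantities. The guiding intuition is that enlarging disk $i$ lengthens the two sides of the center triangle adjacent to its vertex while fixing the opposite side $l_i$, thereby shrinking the apex angle $\vartheta_i$ and enlarging the base angles. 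The main obstacle is that this intuition can fail verbatim: since $\Theta$ is only assumed to take values in $[0,\pi)$, the center triangle and the sub-triangles may be obtuse, and then $\partial l_k/\partial r_i=\cos\beta_i^{(j)}$ can be negative, so an adjacent side may in fact shorten. Establishing $\langle b\rangle$ therefore requires exploiting the global constraint $\Theta_i+\Theta_j+\Theta_k<\pi$ (which produces a genuine interstice and bounds the sub-triangle angles) to show the competing terms never reverse the net sign; this is the delicate sign analysis of \cite{Zhou,Xu}. Finally, for the total $\partial(\vartheta_i+\vartheta_j+\vartheta_k)/\partial r_i<0$ I would invoke Gauss--Bonnet, $\vartheta_i+\vartheta_j+\vartheta_k=\pi-\area(T)$, reducing the claim to the monotonicity $\partial\area(T)/\partial r_i>0$ of the area of the center triangle, which follows from the same sign control.
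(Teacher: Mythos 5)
First, a point of comparison: the paper does not prove Lemma \ref{L-2-2} at all. It is stated as ``a combination of Zhou \cite{Zhou} and Xu \cite{Xu}'', with Chow--Luo \cite{Luo1} and Guo \cite{Guo} cited as related work, so there is no internal argument to measure your proposal against. Your reduction framework is correct and is indeed the standard one used in those references: the circle cosine law $\cosh l_k=\cosh r_i\cosh r_j+\sinh r_i\sinh r_j\cos\Theta_k$, the chain rule through the side lengths, the identity $\partial l_k/\partial r_j=\cos\beta_j^{(i)}$, and the angle--length derivatives $\partial\vartheta_i/\partial l_i=\sinh l_i/A$, $\partial\vartheta_i/\partial l_j=-\sinh l_i\cos\vartheta_k/A$ all check out, as do the resulting expressions for $\partial\vartheta_i/\partial r_i$ and $\partial\vartheta_j/\partial r_i$, and the change of variable $u_a=\log\tanh(r_a/2)$ that turns $\langle a\rangle$ into a symmetry of a Jacobian.

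Nevertheless there is a genuine gap, and it sits exactly where the content of the lemma lies. For $\langle b\rangle$ you correctly diagnose that with $\Theta\in[0,\pi)$ the sub-triangle angles can be obtuse, so $\cos\beta_i^{(j)}$ can be negative and the naive monotonicity intuition breaks down; but you then write that the needed positivity ``requires exploiting the global constraint $\Theta_i+\Theta_j+\Theta_k<\pi$ \dots\ this is the delicate sign analysis of \cite{Zhou,Xu}.'' That sign analysis is not an auxiliary detail to be outsourced: showing that the angle-sum condition forces $\cos\vartheta_k\cos\beta_i^{(k)}+\cos\vartheta_j\cos\beta_i^{(j)}>0$ and $\cos\beta_i^{(k)}-\cos\vartheta_i\cos\beta_i^{(j)}>0$ \emph{is} the lemma (for $\Theta\in[0,\pi/2]$ these are Thurston's classical facts; the whole point is the extension to $[0,\pi)$). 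Having only restated the inequalities in these coordinates, you have not proved them. The same problem infects the third inequality: the Gauss--Bonnet reformulation $\vartheta_i+\vartheta_j+\vartheta_k=\pi-\area(T)$ is fine, but the claim that $\partial\area(T)/\partial r_i>0$ ``follows from the same sign control'' is unsupported; the sum's sign cannot be read off from its terms, since $\partial\vartheta_i/\partial r_i<0$ while $\partial\vartheta_j/\partial r_i,\,\partial\vartheta_k/\partial r_i>0$, so this monotonicity needs its own argument. The unexecuted ``mechanical bookkeeping'' in $\langle a\rangle$ is a lesser offense, since it genuinely is a finite computation from the identities you list. In fairness, deferring the hard signs to \cite{Zhou,Xu} is precisely what the paper itself does; but judged as a self-contained proof attempt, your proposal is a correct reduction that stops short of the decisive step.
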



\begin{lemma}\label{L-2-3}
Under the conditions of Lemma \ref{L-2-1}, let $r=(r_i,r_j,r_k)$ and $\tilde{r}=(\tilde{r}_i,\tilde{r}_j,\tilde{r}_k)$ be two vectors of positive numbers such that
\[
\max_{\tau=i,j,k}\Bigg\{\,\frac{\tanh(r_\tau/2)}{\tanh(\tilde r_\tau/2)}\,\Bigg\}
\,=\,\frac{\tanh(r_i/2)}{\tanh(\tilde r_i/2)}\,>\,1.
\]
Then
\[
\vartheta_i(r)\,<\,\vartheta_i(\tilde{r}).
\]
\end{lemma}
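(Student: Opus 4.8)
The plan is to pass to the logarithmic coordinates $u_\tau=\log\tanh(r_\tau/2)$, $\tau=i,j,k$, in which both the hypothesis and the derivative relations of Lemma \ref{L-2-2} take their cleanest form, and then to run a one-parameter monotonicity (maximum-principle) argument along the segment joining $\tilde r$ to $r$. The point of the substitution is that $\tanh(r/2)\in(0,1)$, so the admissible region is exactly $\{u_\tau<0\}$, and the ratio hypothesis becomes the statement that a single coordinate difference is maximal and positive.

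First I would record the elementary identity $dr_\tau/du_\tau=\sinh r_\tau$, so that
\[
\frac{\partial\vartheta_i}{\partial u_\tau}=\sinh r_\tau\,\frac{\partial\vartheta_i}{\partial r_\tau},\qquad \tau=i,j,k .
\]
Writing $A_\tau=\partial\vartheta_i/\partial u_\tau$, part $\langle a\rangle$ of Lemma \ref{L-2-2} says precisely that the matrix $(\partial\vartheta_\alpha/\partial u_\beta)$ is symmetric, i.e. $\partial\vartheta_j/\partial u_i=A_j$ and $\partial\vartheta_k/\partial u_i=A_k$. Part $\langle b\rangle$, together with the evident symmetry of the three labels, gives the sign pattern $A_i<0$ and $A_j,A_k>0$; and multiplying the third inequality of $\langle b\rangle$ by $\sinh r_i>0$ and invoking the symmetry just noted yields the crucial bound
\[
A_i+A_j+A_k=\sinh r_i\,\frac{\partial(\vartheta_i+\vartheta_j+\vartheta_k)}{\partial r_i}<0 .
\]

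Next I would parametrize the segment $u(t)=(1-t)\tilde u+t\,u$, $t\in[0,1]$, and set $\delta_\tau=u_\tau-\tilde u_\tau$. The hypothesis translates to $\delta_i=\max_\tau\delta_\tau>0$, since $\log$ is increasing. Because $u_\tau,\tilde u_\tau<0$, the whole segment stays in $\{u_\tau<0\}$, so every intermediate triple $r(t)$ is a genuine three-circle configuration with the fixed angles $\Theta_i,\Theta_j,\Theta_k$, and the sign relations above hold along the entire path. Computing the derivative of $\vartheta_i$ along the segment and using $A_j,A_k>0$ with $\delta_j,\delta_k\le\delta_i$,
\[
\frac{d}{dt}\vartheta_i\big(u(t)\big)=A_i\delta_i+A_j\delta_j+A_k\delta_k\le(A_i+A_j+A_k)\,\delta_i<0 ,
\]
and integrating from $t=0$ to $t=1$ gives $\vartheta_i(r)-\vartheta_i(\tilde r)<0$, which is the claim.

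The main obstacle is choosing the right coordinate: in the raw radius variables the relevant combination of partial derivatives has no definite sign, and it is only after the substitution $u=\log\tanh(r/2)$ — which simultaneously symmetrizes the Jacobian via $\langle a\rangle$ and makes it diagonally dominant via the summed inequality in $\langle b\rangle$ — that the maximal-coordinate hypothesis can be converted into strict monotonicity. A secondary point to check is that the straight segment in $u$-space never leaves the admissible region, which I would verify directly from $u_\tau,\tilde u_\tau<0$, so that the derivative identities of Lemma \ref{L-2-2} remain valid at every intermediate configuration and the final integration is legitimate.
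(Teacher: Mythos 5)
Your proposal is correct and follows essentially the same route as the paper's proof: the substitution $u_\tau=\log\tanh(r_\tau/2)$, the symmetrization and diagonal-dominance consequences of Lemma \ref{L-2-2}, and monotonicity of $\vartheta_i$ along the straight segment joining $\tilde u$ to $u$, with the same bound $A_i\delta_i+A_j\delta_j+A_k\delta_k\le(A_i+A_j+A_k)\delta_i<0$. The only cosmetic difference is that you integrate the derivative over $[0,1]$ while the paper invokes the mean value theorem at a single point $\xi$; the two are interchangeable.
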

\begin{proof}
For $\tau=i,j,k$, let $u_\tau=\log\tanh(r_\tau/2)$. Then $\vartheta_i,\vartheta_j,\vartheta_k$ are smooth functions of $u=(u_i,u_j,u_k)$. Due to Lemma \ref{L-2-2}, we obtain
\[
\dfrac{\partial\vartheta_i}{\partial u_j}\,=\,\dfrac{\partial\vartheta_j}{\partial u_i}\,>\,0
\]
and
\[
\frac{\partial(\vartheta_i+\vartheta_j+\vartheta_k)}{\partial u_i}\,<\,0.
\]
For $t\in[0,1]$, let $r_\tau(t)>0$ such that
\[
tu_\tau+(1-t)\tilde{u}_\tau\,=\,\log\tanh(r_\tau(t)/2).
\]
Define
\[
F(t)=\vartheta_i\big(r(t)\big).
 \]
 By the mean value theorem, there exists $\xi\in(0,1)$ such that
\[
\vartheta_i(r)-\vartheta_i(\tilde{r})\,=\,F(1)-F(0)\,=\,F'(\xi).
\]
Setting $u(\xi)=\xi u+(1-\xi)\tilde{u}$, from Lemma \ref{L-2-2}, it follows that
\[
\begin{aligned}
F'(\xi)\,=\, &\, \big(\,u_i-\tilde{u}_i\,\big)\frac{\partial\vartheta_i}{\partial u_i}(u(\xi))+\big(\,u_j-\tilde{u}_j\,\big)\frac{\partial\vartheta_i}{\partial u_j}(u(\xi))+\big(\,u_k-\tilde{u}_k\,\big)\frac{\partial\vartheta_i}{\partial u_k}(u(\xi))\\
\leq\, &\, \max_{\tau=i,j,k}\big\{(\,u_\tau-\tilde{u}_\tau\,\big)\}\Big(\,\frac{\partial\vartheta_i}{\partial u_i}(u(\xi))+\frac{\partial\vartheta_i}{\partial u_j}(u(\xi))+\frac{\partial\vartheta_i}{\partial u_k}(u(\xi))\,\Big)\\
=\, &\, \big(\,u_i-\tilde{u}_i\,\big)\frac{\partial(\vartheta_i+\vartheta_j+\vartheta_k)}{\partial u_i}(u(\xi))\\
<\, &\, 0.
\end{aligned}
\]
Consequently,
\[
\vartheta_i(r)\,<\,\vartheta_i(\tilde{r}).
\]
\end{proof}

\begin{lemma}\label{L-2-4}
Under the conditions of Lemma \ref{L-2-1}, we have
\[
\displaystyle{\lim_{r_i\to\infty}\vartheta_i\,=\,0},
\]
and
\[
\begin{aligned}
&\lim_{(r_i,r_j,r_k)\to(0,a,b)}\vartheta_i\,=\,\pi-\Theta_i, \\
&\lim_{(r_i,r_j,r_k)\to(0,0,c)}\vartheta_i+\vartheta_j\,=\,\pi,  \\
&\lim_{(r_i,r_j,r_k)\to(0,0,0)}\vartheta_i+\vartheta_j+\vartheta_k\,=\,\pi, \\
\end{aligned}
\]
where $a,b,c$ are fixed positive numbers.
\end{lemma}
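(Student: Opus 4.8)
The plan is to reduce every assertion to two classical identities of hyperbolic trigonometry applied to the triangle of centers $\triangle O_iO_jO_k$. Writing $l_i,l_j,l_k$ for its side lengths, with $l_k$ opposite the vertex $O_k$ (so $l_k=\dist(O_i,O_j)$), the fact that $\partial D_i,\partial D_j$ meet at exterior angle $\Theta_k$ gives, via the hyperbolic law of cosines in the sub-triangle $O_iO_jP$ spanned by the two centers and an intersection point $P$ (where the angle at $P$ between the radii is $\pi-\Theta_k$),
\[
\cosh l_k \,=\, \cosh r_i\cosh r_j + \sinh r_i\sinh r_j\cos\Theta_k,
\]
and cyclically for $l_i,l_j$. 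The inner angle at $O_i$ is then recovered from the hyperbolic law of cosines for angles,
\[
\cos\vartheta_i \,=\, \frac{\cosh l_j\cosh l_k-\cosh l_i}{\sinh l_j\sinh l_k}.
\]
These two formulas turn each geometric limit into an elementary computation of $\cosh/\sinh$ asymptotics.

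For the first limit I would let $r_i\to\infty$ with $r_j,r_k$ bounded. Then $\cosh l_j,\cosh l_k$ blow up like $\tfrac12 e^{r_i}$ while $\cosh l_i$ stays bounded, so
\[
\cos\vartheta_i \,=\, \coth l_j\coth l_k - \frac{\cosh l_i}{\sinh l_j\sinh l_k}\,\to\,1,
\]
whence $\vartheta_i\to0$. For the second limit, as $(r_i,r_j,r_k)\to(0,a,b)$ one reads off $l_k\to a$, $l_j\to b$ and $\cosh l_i\to\cosh a\cosh b+\sinh a\sinh b\cos\Theta_i$; substituting into the angle formula collapses the $\cosh a\cosh b$ terms and leaves $\cos\vartheta_i\to-\cos\Theta_i$, i.e. $\vartheta_i\to\pi-\Theta_i$. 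Geometrically this is the transparent statement that a vanishing disk $D_i$ has its center land on an intersection point of $\partial D_j$ and $\partial D_k$, so that $\vartheta_i$ becomes the angle between the two radii there, supplementary to the exterior intersection angle $\Theta_i$.

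The last two limits I would handle through the Gauss--Bonnet angle-defect identity $\vartheta_i+\vartheta_j+\vartheta_k=\pi-\area(\triangle O_iO_jO_k)$. When $(r_i,r_j,r_k)\to(0,0,0)$ all three sides tend to $0$, the triangle collapses to a point, its area tends to $0$, and the angle sum tends to $\pi$. When $(r_i,r_j,r_k)\to(0,0,c)$ one checks $l_k\to 0$ while $l_i,l_j\to c$, so the triangle again degenerates and its area tends to $0$, giving angle sum $\to\pi$; isolating the single angle at $O_k$ via the law of cosines yields
\[
\cos\vartheta_k \,\to\, \frac{\cosh^2 c-1}{\sinh^2 c}\,=\,1,
\]
hence $\vartheta_k\to0$ and therefore $\vartheta_i+\vartheta_j\to\pi$.

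The main obstacle is precisely this third limit: feeding $l_k\to0$ directly into the formula for $\vartheta_i$ (or $\vartheta_j$) produces an indeterminate $0/0$, since both the numerator and $\sinh l_k$ vanish, so the individual base angles cannot be obtained by naive substitution. Routing around this through the angle-sum (area) identity, together with the clean evaluation of the opposite angle $\vartheta_k$, is the key idea; the remaining steps are routine hyperbolic-trigonometric asymptotics.
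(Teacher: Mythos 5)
Your proof is correct, and it follows what is essentially the intended route: the paper itself contains no proof of Lemma \ref{L-2-4} (it is dismissed as ``routine calculations'' with a pointer to \cite{Zhou}, see also \cite{Ge-Jiang}, \cite{Ge-Xu}), and those routine calculations are precisely what you carry out, namely the two hyperbolic laws of cosines: $\cosh l_k=\cosh r_i\cosh r_j+\sinh r_i\sinh r_j\cos\Theta_k$ for the sides and the angle formula for $\cos\vartheta_i$. Two points in your write-up should be tightened. First, in the Gauss--Bonnet step you assert without justification that the area of the triangle tends to $0$ when the sides tend to $(c,c,0)$ or $(0,0,0)$; this is true but needs a line (for instance, the triangle is contained in the $l_k$-neighborhood of a geodesic segment of bounded length, and the area of that tube vanishes with $l_k$), or it can be bypassed entirely: once you know $\vartheta_k\to 0$ and $l_k\to 0$, the hyperbolic law of cosines for angles, $\cos\vartheta_k=-\cos\vartheta_i\cos\vartheta_j+\sin\vartheta_i\sin\vartheta_j\cosh l_k$, forces $\cos(\vartheta_i+\vartheta_j)\to-1$ directly, with no area argument. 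Second, you prove $\lim_{r_i\to\infty}\vartheta_i=0$ only when $r_j,r_k$ stay bounded, whereas in the paper this limit feeds the properness claim $\langle c\rangle$ of Proposition \ref{P-2-4}, where the other radii may blow up simultaneously; your computation does extend to that case, since $\cosh l_j\geq c_j\cosh r_i\cosh r_k$ and $\cosh l_k\geq c_k\cosh r_i\cosh r_j$ with $c_\tau=\min\{1,1+\cos\Theta_\tau\}>0$, while $\cosh l_i\leq 2\cosh r_j\cosh r_k$, so the subtracted term in your formula for $\cos\vartheta_i$ is $O(1/\cosh^2 r_i)$ uniformly in $r_j,r_k$, but this should be said explicitly.
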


The proof is based on routine calculations. For details, please refer to Zhou \cite{Zhou}. See also Ge-Jiang \cite{Ge-Jiang} and Ge-Xu \cite{Ge-Xu}.

\subsection{Degenerating cases}
For a three-circle configuration, as one of the radii $r_k\to 0$, it will degenerate to a pair of intersecting disks\,(see FIGURE 3). Below are several parallel results.

\begin{lemma}\label{L-2-5}
For any $r_i, r_j>0$ and $\Theta_k\in(0,\pi)$,
 there exists a configuration of two mutually intersecting closed disks in hyperbolic geometry, unique up to isometry, having radii $r_i, r_j$  and meeting in exterior intersection angle $\Theta_k$.
\end{lemma}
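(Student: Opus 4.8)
The plan is to mimic the existence-and-uniqueness argument for the three-circle configuration (Lemma \ref{L-2-1}), treating the two-circle case as a degenerate limit where one radius is sent to zero, but carrying out a direct construction in the upper half-plane or disk model of $\mathbb{H}^2$. First I would set up coordinates: place the center of the first disk $D_i$ at a fixed point, say the origin of the Klein or Poincar\'e disk, so that $D_i$ is the hyperbolic disk of radius $r_i$ about that point. The remaining freedom after this normalization is a one-parameter family of rotations about the center of $D_i$ together with the placement of the second disk, so the uniqueness ``up to isometry'' should come down to showing that the geometric constraints pin down $D_j$ modulo this residual symmetry.

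The key geometric quantity is the hyperbolic distance $\ell$ between the two centers. For two hyperbolic disks of radii $r_i,r_j$ whose boundary circles meet at exterior intersection angle $\Theta_k$, there is a hyperbolic law-of-cosines relation determining $\ell$ explicitly in terms of $r_i,r_j,\Theta_k$. Concretely, the angle $\Theta_k$ is supplementary to the angle between the exterior normals, so the circles cross at interior angle $\pi-\Theta_k$; applying the cosine rule for the hyperbolic triangle formed by the two centers and an intersection point yields an equation of the schematic form
\[
\cosh\ell \;=\; \cosh r_i\cosh r_j \,+\, \sinh r_i\sinh r_j\cos\Theta_k .
\]
The second step is therefore to verify that for every $r_i,r_j>0$ and every $\Theta_k\in(0,\pi)$ the right-hand side exceeds $1$ (so that a genuine positive distance $\ell$ exists) and exceeds neither $\cosh(r_i+r_j)$ in a way that would prevent the circles from actually crossing; that is, I would check $|r_i-r_j|<\ell<r_i+r_j$, which is exactly the condition that the two closed disks overlap in a lens rather than being tangent, nested, or disjoint. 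Given such an $\ell$, place the center of $D_j$ at hyperbolic distance $\ell$ from the center of $D_i$ along a chosen geodesic ray; this produces the desired configuration, and existence follows.

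For uniqueness, I would argue that any configuration realizing the data must have its centers at distance $\ell$ satisfying the same cosine relation, since that relation is forced by the radii and the intersection angle. Two configurations with the same $(r_i,r_j,\ell)$ differ only by the choice of geodesic ray from the center of $D_i$, and any two such rays are related by a rotation fixing that center, hence by an orientation-preserving isometry of $\mathbb{H}^2$; composing with a reflection if necessary handles orientation. Thus the configuration is unique up to isometry. The step I expect to require the most care is the inequality analysis in the second step: I must confirm that the monotonicity of the cosine expression in $\Theta_k$ over the full open range $(0,\pi)$ keeps $\ell$ strictly between $|r_i-r_j|$ and $r_i+r_j$, so that the two boundary circles genuinely intersect transversally for every admissible angle rather than degenerating to tangency at the endpoints — this is precisely the point where the restriction $\Theta_k\in(0,\pi)$ (as opposed to the closed interval) is used, and it can be cross-checked against the limiting behavior recorded in Lemma \ref{L-2-4} as $r_k\to 0$.
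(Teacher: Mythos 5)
Your proposal is correct, and in fact it supplies a proof that the paper itself omits: Lemma \ref{L-2-5} is stated in Section 2.2 merely as one of several ``parallel results'' for the degenerate two-circle case, with the proofs of the three-circle analogues (Lemmas \ref{L-2-1}--\ref{L-2-4}) deferred to Zhou's earlier paper. Your argument is the natural self-contained one. The key relation
\[
\cosh\ell \;=\; \cosh r_i\cosh r_j \,+\, \sinh r_i\sinh r_j\cos\Theta_k
\]
is exactly right: the triangle with vertices the two centers and an intersection point has angle $\pi-\Theta_k$ at the intersection point, and the hyperbolic law of cosines gives the displayed formula. The inequality analysis you flag as the delicate step does go through cleanly: since $\cos\Theta_k\in(-1,1)$ for $\Theta_k\in(0,\pi)$, the addition formulas give
\[
\cosh|r_i-r_j| \;<\; \cosh\ell \;<\; \cosh(r_i+r_j),
\]
so $|r_i-r_j|<\ell<r_i+r_j$, which is precisely the condition for transversal intersection; the endpoints $\Theta_k=0$ and $\Theta_k=\pi$ would give external and internal tangency respectively, which is why the open interval is needed. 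Existence then follows because the strict triangle inequality guarantees a hyperbolic triangle with sides $r_i,r_j,\ell$, whose angle at the third vertex is forced to be $\pi-\Theta_k$ by the same cosine rule; uniqueness follows, as you say, because the cosine relation pins down $\ell$ in any realizing configuration and isometries of $\mathbb{H}^2$ act transitively on ordered pairs of points at a fixed distance. The only caveat is cosmetic: your cross-check against Lemma \ref{L-2-4} is a sanity check rather than a logical ingredient, and your sentence about $\cosh(r_i+r_j)$ is garbled as written, though the inequality you actually verify is the correct one.
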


\begin{figure}[htbp]\centering
\includegraphics[width=0.69\textwidth]{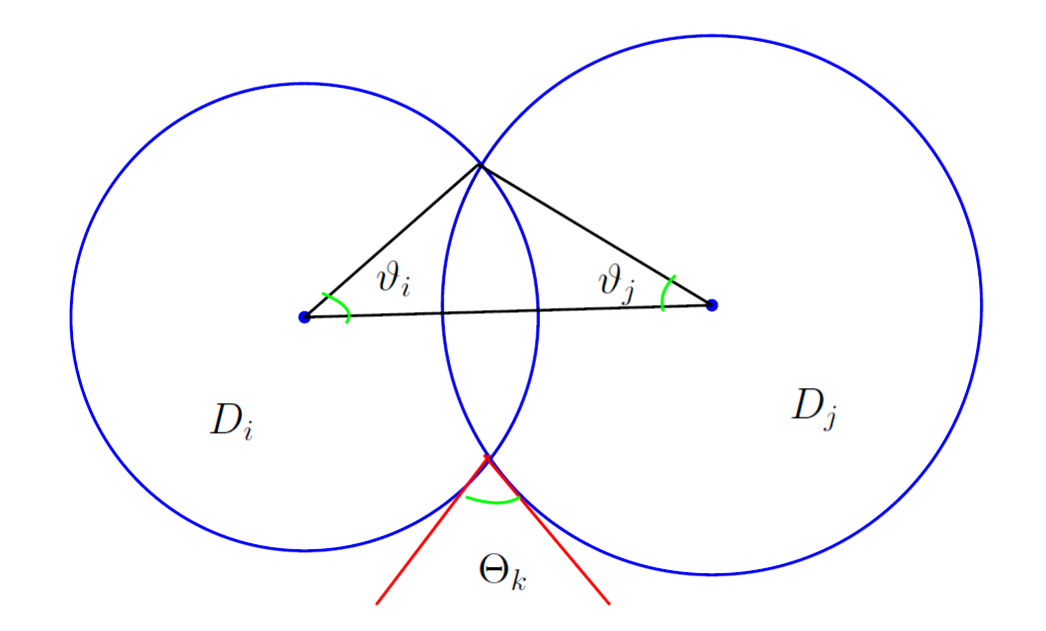}
\caption{}
\end{figure}

Fixing  $\Theta_k\in (0,\,\pi)$, then the inner angles $\vartheta_i,\vartheta_j$ are smooth functions of  $r=(r_i,r_j)$.
\begin{lemma}\label{L-2-6}
We have
\begin{itemize}
\item[$\langle a\rangle$]
$\displaystyle{
\sinh r_j\dfrac{\partial\vartheta_i}{\partial r_j}\,=\,\sinh r_i\dfrac{\partial\vartheta_j}{\partial r_i}}$;
\item[$\langle b\rangle$]
$\displaystyle{
\dfrac{\partial\vartheta_i}{\partial r_i}\ <\ 0,\quad \dfrac{\partial\vartheta_j}{\partial r_i}\ > \ 0,\,\quad \dfrac{\partial(\vartheta_i+\vartheta_j)}{\partial r_i}\ <\ 0}.$
\end{itemize}
\end{lemma}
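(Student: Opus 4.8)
The plan is to realize the degenerate configuration as a single hyperbolic triangle and to read off both assertions from it: I would import the symmetric identity $\langle a\rangle$ from Lemma \ref{L-2-2} by a smooth degeneration, and establish the strict signs in $\langle b\rangle$ by a direct variational picture. First I fix an intersection point $P$ of the two disks and look at the geodesic triangle $O_iO_jP$, where $O_i,O_j$ are the hyperbolic centers. Since the exterior intersection angle is supplementary to the angle between the outward normals, the interior angle of this triangle at $P$ equals $\psi:=\pi-\Theta_k$, a constant in $(0,\pi)$; the two sides issuing from $P$ have lengths $r_i,r_j$, while $\vartheta_i,\vartheta_j$ are the angles at $O_i,O_j$. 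The hyperbolic law of cosines then gives
\[
\cosh d\,=\,\cosh r_i\cosh r_j+\sinh r_i\sinh r_j\cos\Theta_k,\qquad d=|O_iO_j|>0,
\]
and the law of sines yields $\sinh r_i\sin\vartheta_i=\sinh r_j\sin\vartheta_j$. Because $\psi\in(0,\pi)$ keeps the triangle nondegenerate (one checks $d>0$ always), these relations determine $\vartheta_i,\vartheta_j$ as smooth functions of $(r_i,r_j)$; this also underlies the existence and uniqueness of Lemma \ref{L-2-5}.

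To obtain $\langle a\rangle$ I view this triangle as the $r_k\to0$ limit of the three-circle configuration of Lemma \ref{L-2-1}. Choose auxiliary angles $\Theta_i,\Theta_j$ small enough that $\Theta_i+\Theta_j+\Theta_k<\pi$; then for every $r_k>0$ Lemmas \ref{L-2-1}--\ref{L-2-2} apply, and as $r_k\to0$ the center $O_k$ converges to $P$, since $\cosh|O_iO_k|=\cosh r_i\cosh r_k+\sinh r_i\sinh r_k\cos\Theta_j\to\cosh r_i$ and likewise $|O_jO_k|\to r_j$. The dependence of $O_k$, hence of $\vartheta_i,\vartheta_j$, on $(r_i,r_j,r_k)$ is smooth up to and including $r_k=0$, where the triangle $O_iO_jP$ stays nondegenerate; consequently the identity $\sinh r_j\,\partial_{r_j}\vartheta_i=\sinh r_i\,\partial_{r_i}\vartheta_j$ of Lemma \ref{L-2-2}$\langle a\rangle$, valid for all $r_k>0$, persists at $r_k=0$. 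Since the limiting angles depend only on $r_i,r_j,\Theta_k$, this is exactly $\langle a\rangle$.

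For $\langle b\rangle$ the cleanest route keeps $P$ and the apex angle $\psi$ fixed and slides $O_i$ outward along its ray while $O_j$ stays put. As $r_i$ grows from $0$ to $\infty$, the angle $\vartheta_j$ at $O_j$ opens monotonically from $0$, giving $\partial_{r_i}\vartheta_j>0$, while the angle $\vartheta_i$ at $O_i$ decreases from $\pi-\psi=\Theta_k$ toward $0$, giving $\partial_{r_i}\vartheta_i<0$. For the third inequality I use the Gauss--Bonnet relation $\vartheta_i+\vartheta_j=\Theta_k-\mathrm{Area}(O_iO_jP)$: as $O_i$ recedes the larger triangle strictly contains the smaller one, so its area strictly increases, whence $\partial_{r_i}(\vartheta_i+\vartheta_j)=-\partial_{r_i}\mathrm{Area}<0$ (equivalently, since $\vartheta_k:=\pi-\Theta_k$ is constant here, $\partial_{r_i}(\vartheta_i+\vartheta_j)=\partial_{r_i}(\vartheta_i+\vartheta_j+\vartheta_k)$, the degenerate shadow of Lemma \ref{L-2-2}$\langle b\rangle$). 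The one genuinely delicate point is strictness: passing to the limit $r_k\to0$ only propagates the inequalities of Lemma \ref{L-2-2}$\langle b\rangle$ in non-strict form, so continuity alone cannot certify $\partial_{r_i}\vartheta_i<0$ at the boundary $r_k=0$. This is precisely why I argue the signs directly from the sliding-apex picture, where strict monotonicity of the area and of the two base angles is transparent and can, if needed, be confirmed by differentiating the cosine and sine laws recorded above.
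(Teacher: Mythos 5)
The paper never actually proves Lemma \ref{L-2-6}: Lemmas \ref{L-2-5}--\ref{L-2-8} are stated as ``parallel results'' for the degenerate case $r_k\to0$, with the underlying computations delegated to the references cited after Lemma \ref{L-2-4}. So your proposal must stand on its own, and most of it does. Your setup is correct: the triangle $O_iO_jP$ has apex angle $\psi=\pi-\Theta_k\in(0,\pi)$ at the intersection point $P$, sides $r_i,r_j$ issuing from $P$, and base angles $\vartheta_i,\vartheta_j$; the law of cosines and law of sines you record are right, as is the Gauss--Bonnet relation $\vartheta_i+\vartheta_j=\Theta_k-\mathrm{Area}(O_iO_jP)$. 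Your proof of $\langle a\rangle$ is also sound: the side lengths of the triangle of centers are analytic in $(r_i,r_j,r_k)$ up to and including $r_k=0$, the limit triangle is nondegenerate exactly because $\psi\in(0,\pi)$, so the three-circle angles converge in $C^1$ to the two-circle ones and the \emph{identity} of Lemma \ref{L-2-2}$\langle a\rangle$ survives the limit.

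The gap is in $\langle b\rangle$. You correctly observe that letting $r_k\to0$ in Lemma \ref{L-2-2}$\langle b\rangle$ only propagates the inequalities in non-strict form, but the substitute you offer --- strict monotonicity of $\vartheta_i$, $\vartheta_j$, and of the area as $O_i$ slides outward --- has exactly the same defect: a smooth, strictly monotone function can have vanishing derivative at isolated points (consider $-x^3$ at $0$), so strict monotonicity only certifies $\partial_{r_i}\vartheta_i\le0$, $\partial_{r_i}\vartheta_j\ge0$, $\partial_{r_i}(\vartheta_i+\vartheta_j)\le0$. Since the lemma is precisely a statement about strict signs of derivatives, the differentiation you defer as an optional ``confirmation'' is in fact the proof, and it must be carried out. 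Fortunately it is routine from your setup. The four-parts (cotangent) formula in the triangle $O_iO_jP$ gives
\[
\sin\psi\,\cot\vartheta_j=\sinh r_j\coth r_i-\cos\psi\,\cosh r_j,
\qquad
\sin\psi\,\cot\vartheta_i=\sinh r_i\coth r_j-\cos\psi\,\cosh r_i,
\]
and differentiating in $r_i$ yields
\[
\frac{\partial\vartheta_j}{\partial r_i}=\frac{\sin^2\vartheta_j\,\sinh r_j}{\sin\psi\,\sinh^2 r_i}>0,
\qquad
\frac{\partial\vartheta_i}{\partial r_i}=-\frac{\sin^2\vartheta_i}{\sin\psi}\bigl(\cosh r_i\coth r_j-\cos\psi\,\sinh r_i\bigr)<0,
\]
the last bracket being positive since $\cosh r_i\coth r_j>\sinh r_i\ge\cos\psi\,\sinh r_i$ when $\cos\psi\ge0$, and trivially when $\cos\psi<0$. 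For the sum, the law of sines $\sin\vartheta_j\sinh r_j=\sin\vartheta_i\sinh r_i$ turns the first derivative into $\sin^2\vartheta_i/(\sin\psi\,\sinh r_j)$, whence
\[
\sin\psi\,\frac{\partial(\vartheta_i+\vartheta_j)}{\partial r_i}
=\frac{\sin^2\vartheta_i}{\sinh r_j}\bigl(1-\cosh r_i\cosh r_j+\cos\psi\,\sinh r_i\sinh r_j\bigr)
=\frac{\sin^2\vartheta_i}{\sinh r_j}\bigl(1-\cosh d\bigr)<0,
\]
since $d>0$. With these lines added, $\langle b\rangle$ is complete; note also that the same formulas give $\sinh r_j\,\partial_{r_j}\vartheta_i=\sinh r_i\,\partial_{r_i}\vartheta_j$ directly (both sides equal $\sin\vartheta_i\sin\vartheta_j/\sin\psi$ by the law of sines), so the limiting argument for $\langle a\rangle$, while valid, can be bypassed entirely.
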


\begin{lemma}\label{L-2-7}
Given two radius vectors $r=(r_i,r_j)$ and $\tilde{r}=(\tilde{r}_i,\tilde{r}_j)$ satisfying
\[
\max_{\tau=i,j}\Bigg\{\frac{\tanh(r_\tau/2)}{\tanh(\tilde r_\tau/2)}\Bigg\}
\,=\,\frac{\tanh(r_i/2)}{\tanh(\tilde r_i/2)}\,>\,1,
\]
then
\[
\vartheta_i(r)\,<\,\vartheta_i(\tilde{r}).
\]
\end{lemma}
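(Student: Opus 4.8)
The plan is to follow the proof of Lemma~\ref{L-2-3} essentially verbatim, with the three-circle identities of Lemma~\ref{L-2-2} replaced by the two-circle versions recorded in Lemma~\ref{L-2-6}. First I pass to the coordinates $u_\tau=\log\tanh(r_\tau/2)$, $\tau=i,j$, so that $\vartheta_i,\vartheta_j$ become smooth functions of $u=(u_i,u_j)$. Since $\partial r_\tau/\partial u_\tau=\sinh r_\tau>0$, the symmetry relation $\langle a\rangle$ of Lemma~\ref{L-2-6} becomes
\[
\frac{\partial\vartheta_i}{\partial u_j}=\sinh r_j\,\frac{\partial\vartheta_i}{\partial r_j}=\sinh r_i\,\frac{\partial\vartheta_j}{\partial r_i}=\frac{\partial\vartheta_j}{\partial u_i},
\]
and the sign relations $\langle b\rangle$ turn into
\[
\frac{\partial\vartheta_i}{\partial u_i}<0,\qquad \frac{\partial\vartheta_i}{\partial u_j}=\frac{\partial\vartheta_j}{\partial u_i}>0,\qquad \frac{\partial(\vartheta_i+\vartheta_j)}{\partial u_i}<0.
\]
In these coordinates the hypothesis becomes $\max_{\tau}\{u_\tau-\tilde u_\tau\}=u_i-\tilde u_i>0$.

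Next I interpolate linearly: for $t\in[0,1]$ define $r_\tau(t)>0$ by $tu_\tau+(1-t)\tilde u_\tau=\log\tanh(r_\tau(t)/2)$, and set $F(t)=\vartheta_i(r(t))$. The mean value theorem supplies $\xi\in(0,1)$ with $\vartheta_i(r)-\vartheta_i(\tilde r)=F(1)-F(0)=F'(\xi)$. Expanding $F'(\xi)$ by the chain rule and bounding the $u_j$-term using $u_j-\tilde u_j\le u_i-\tilde u_i$ together with $\partial\vartheta_i/\partial u_j>0$, I obtain
\[
F'(\xi)\le(u_i-\tilde u_i)\Big(\frac{\partial\vartheta_i}{\partial u_i}+\frac{\partial\vartheta_i}{\partial u_j}\Big)=(u_i-\tilde u_i)\,\frac{\partial(\vartheta_i+\vartheta_j)}{\partial u_i}<0,
\]
where the middle equality uses the symmetry $\partial\vartheta_i/\partial u_j=\partial\vartheta_j/\partial u_i$ and the final inequality uses $u_i-\tilde u_i>0$ together with the monotonicity of the angle sum. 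Hence $\vartheta_i(r)<\vartheta_i(\tilde r)$.

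There is no genuine obstacle here: the statement is precisely the degenerate specialization of Lemma~\ref{L-2-3} with the $u_k$-variable suppressed, and the only point deserving attention is the translation of Lemma~\ref{L-2-6} into the variables $u_\tau$—in particular the computation $\partial r_\tau/\partial u_\tau=\sinh r_\tau$ and the resulting agreement of the mixed derivatives $\partial\vartheta_i/\partial u_j=\partial\vartheta_j/\partial u_i$, which is what makes the bounding step collapse onto the monotone quantity $\vartheta_i+\vartheta_j$. One could alternatively try to deduce the lemma by letting $r_k\to0$ in Lemma~\ref{L-2-3}, but justifying the passage of a \emph{strict} inequality to the limit is more delicate than simply rerunning the mean-value argument directly, so I would carry out the direct proof above.
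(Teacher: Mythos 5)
Your proposal is correct and is essentially the proof the paper intends: the paper states Lemma~\ref{L-2-7} without proof as a ``parallel result,'' and the intended argument is exactly the one you give, namely the proof of Lemma~\ref{L-2-3} rerun in the coordinates $u_\tau=\log\tanh(r_\tau/2)$ with the identities of Lemma~\ref{L-2-2} replaced by those of Lemma~\ref{L-2-6}. Your translation $\partial r_\tau/\partial u_\tau=\sinh r_\tau$, the resulting symmetry $\partial\vartheta_i/\partial u_j=\partial\vartheta_j/\partial u_i$, and the mean-value bound collapsing onto $\partial(\vartheta_i+\vartheta_j)/\partial u_i<0$ are all exactly as in the paper's three-circle argument, and your remark that taking $r_k\to 0$ in Lemma~\ref{L-2-3} would only yield a non-strict inequality is a sound reason to prefer the direct proof.
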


\begin{lemma}\label{L-2-8}
Under the conditions of Lemma \ref{L-2-5}, then
\[
\displaystyle{\lim_{r_i\to\infty}\vartheta_i\,=\,0},
\]
and
\[
\begin{aligned}
&\lim_{(r_i,r_j)\to(0,d)}\vartheta_i\,=\,\Theta_k, \\
&\lim_{(r_i,r_j)\to(0,0)}\vartheta_i+\vartheta_j\,=\,\Theta_k,  \\
\end{aligned}
\]
where $d$ is a fixed positive number.
\end{lemma}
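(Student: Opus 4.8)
The plan is to reduce each assertion to explicit hyperbolic trigonometry. Let $O_i,O_j$ be the centers of the two disks and let $P$ be one of the two points of $\partial D_i\cap\partial D_j$; by Lemma \ref{L-2-5} the triangle $O_iO_jP$ is determined up to isometry, and $\vartheta_i,\vartheta_j$ are its inner angles at $O_i,O_j$. Because the exterior intersection angle is supplementary to the angle between the exterior normals, and these normals point along the rays from $O_i$ and $O_j$ through $P$, the inner angle of the triangle at $P$ equals $\pi-\Theta_k$ (consistently, external tangency $\Theta_k=0$ forces $O_i,P,O_j$ to be collinear). Writing $d_{ij}=\dist(O_i,O_j)$, the law of cosines at $P$ and at $O_i$ then gives
\[
\cosh d_{ij}=\cosh r_i\cosh r_j+\sinh r_i\sinh r_j\cos\Theta_k,
\qquad
\cos\vartheta_i=\frac{\cosh r_i\cosh d_{ij}-\cosh r_j}{\sinh r_i\sinh d_{ij}}.
\]
These express $\vartheta_i$ as an explicit smooth function of $(r_i,r_j)$, and each limit becomes elementary.

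For $\lim_{r_i\to\infty}\vartheta_i=0$, the first identity yields $d_{ij}=r_i+\log\!\big(\cosh r_j+\sinh r_j\cos\Theta_k\big)+o(1)$, the argument of the logarithm being positive since $\Theta_k<\pi$; feeding the large-$r_i$ asymptotics $\cosh(\cdot)\sim\sinh(\cdot)\sim\tfrac12 e^{(\cdot)}$ into the second identity gives $\cos\vartheta_i=1-O(e^{-2r_i})\to1$. For $\lim_{(r_i,r_j)\to(0,d)}\vartheta_i=\Theta_k$, both numerator and denominator of the quotient vanish to first order in $r_i$; expanding with $\cosh r_i=1+O(r_i^2)$, $\sinh r_i=r_i+O(r_i^3)$ and $\cosh d_{ij}=\cosh r_j+r_i\sinh r_j\cos\Theta_k+O(r_i^2)$ cancels the common factor $r_i\sinh r_j$ and leaves $\cos\vartheta_i\to\cos\Theta_k$. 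For $\lim_{(r_i,r_j)\to(0,0)}(\vartheta_i+\vartheta_j)=\Theta_k$, the cleanest route is Gauss--Bonnet: all three sides $r_i,r_j,d_{ij}$ tend to $0$, so the triangle $O_iO_jP$ degenerates and its area tends to $0$; since this area equals $\pi-\big(\vartheta_i+\vartheta_j+(\pi-\Theta_k)\big)$, we get $\vartheta_i+\vartheta_j\to\Theta_k$.

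The computations are routine, and I expect no genuine obstacle; the only step requiring care is the joint limit $(r_i,r_j)\to(0,d)$, where one must first extract the leading $r_i$-behaviour of the implicitly defined quantity $d_{ij}$ before cancelling in the indeterminate $0/0$ quotient. The Gauss--Bonnet observation avoids any such delicacy in the final part and could also serve as an independent check of the first two limits.
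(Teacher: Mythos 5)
Your proof is correct. There is nothing in the paper to compare it against: Lemma \ref{L-2-8} is stated without proof, as one of the ``parallel results'' to the three-circle Lemma \ref{L-2-4}, whose own proof the paper describes as routine calculation and defers to \cite{Zhou}; your argument supplies exactly that calculation. The key geometric identification---that under the paper's definition of exterior intersection angle the inner angle of the triangle $O_iO_jP$ at the intersection point $P$ equals $\pi-\Theta_k$---is right, and both laws of cosines follow from it. One simplification worth noting: substituting the first identity into the second factors $\sinh r_i$ out of the numerator exactly, giving
\[
\cos\vartheta_i\,=\,\frac{\sinh r_i\cosh r_j+\cosh r_i\sinh r_j\cos\Theta_k}{\sinh d_{ij}},
\]
which makes both the limit $r_i\to\infty$ and the limit $(r_i,r_j)\to(0,d)$ immediate and removes the $0/0$ delicacy you flag. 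For $(r_i,r_j)\to(0,0)$ this quotient is still indeterminate, so your Gauss--Bonnet argument (the triangle's area equals $\Theta_k-\vartheta_i-\vartheta_j$ and tends to $0$ with the side lengths) is the right tool there; it is the one genuinely non-computational idea in your write-up, and a cleaner route than pushing the expansion to second order.
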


\subsection{Multi-circle configurations}

A $m$-circle ($m\geq 3$) configuration $\mathcal P$ is the union of a $m$-sided interstice and its adjacent disks. Namely, it is a circle pattern with the \textbf{primitive contact graph}  isomorphic to a $m$-sided polygon.

\begin{lemma}[Rigidity]\label{L-2-9}
 Let $\mathcal P,\,\tilde{\mathcal P}$ be two $m$-circle configurations in hyperbolic geometry, with the same radius vector and exterior intersection angle function. If their interstices share the same conformal structure, then $\mathcal P,\tilde{\mathcal P}$ are isometric.
\end{lemma}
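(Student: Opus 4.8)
The plan is to show that any mark-preserving conformal homeomorphism between the two interstices is the restriction of a hyperbolic isometry carrying one configuration onto the other. Realize both configurations in the disk model $\mathbb{D}$ of the hyperbolic plane, so that each bounding circle $\partial D_l$ is a Euclidean circle $C_l\subset\mathbb{D}$ of hyperbolic radius $r_l$, and the interstice $I$ is a curvilinear $m$-gon whose $l$-th side is an arc $a_l\subset C_l$ and whose corner at $a_{l-1}\cap a_l$ has an interior angle depending only on the corresponding intersection angle (hence equal to the matching angle of $\tilde I$). By hypothesis there is a conformal homeomorphism $\varphi\colon I\to\tilde I$ sending $a_l$ onto $\tilde a_l$ for every $l$ and matching the corners. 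Since $\varphi$ respects all of this data and each interstice is simply connected, the lemma is equivalent to the assertion that a curvilinear $m$-gon bounded by circular arcs is determined, up to a M\"obius transformation, by its interior angles together with its conformal type.

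First I would upgrade the regularity of $\varphi$ and extend it across the boundary. Away from the corners each arc $a_l$ is real-analytic, so $\varphi$ extends continuously and then conformally up to the open arc; at a corner the equality of interior angles forces $\varphi$ to have a well-defined nonzero angular derivative, so corners map to corners. Next I would apply the Schwarz reflection principle across each side: letting $R_l$ and $\tilde R_l$ denote the inversions in $C_l$ and $\tilde C_l$, the rule $\varphi\circ R_l=\tilde R_l\circ\varphi$ continues $\varphi$ conformally across $a_l$ into $D_l$. Iterating these continuations produces a conformal map on the region swept out by the reflection pseudogroup generated by the $R_l$, intertwining it with the pseudogroup generated by the $\tilde R_l$. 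Equivalently, passing to Riemann maps $f,\tilde f\colon\mathbb{D}\to I,\tilde I$ with the \emph{same} prevertices $p_1,\dots,p_m$ (this is exactly what the coincidence of conformal types provides), one sees that $S_f$ and $S_{\tilde f}$ are reflections of rational functions whose double poles sit at the $p_l$ with principal parts fixed by the interior angles, so that they can differ only through finitely many accessory parameters.

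The heart of the matter, and the step I expect to be the main obstacle, is to prove that these accessory parameters are forced to agree, equivalently that the iterated reflection extension of $\varphi$ is \emph{globally univalent} and therefore patches to a single M\"obius transformation of $\hat{\mathbb{C}}$. This is precisely where the univalent-function estimates of the appendix are needed, since for generic angles $\Theta_l$ the reflection pseudogroups are non-discrete and one cannot simply invoke a classical domain of discontinuity; instead one must control the iterated reflected copies directly and show the limit of the extension is injective. Granting global univalence, $\varphi$ extends to a M\"obius map $M$ with $M(C_l)=\tilde C_l$ and $M(I)=\tilde I$. Finally I would invoke the equality of hyperbolic radii: a M\"obius transformation carrying three or more circles $C_l$ to circles $\tilde C_l$ of the same hyperbolic radius must preserve $\mathbb{D}$, hence lie in $\Isom(\mathbb{D})$; such an $M$ sends each $D_l$ isometrically onto $\tilde D_l$ and $I$ onto $\tilde I$, which is exactly the isometry of $\mathcal P$ onto $\tilde{\mathcal P}$ asserted by the lemma.
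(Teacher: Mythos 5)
Your proposal has a genuine gap, and it sits exactly where you yourself place ``the heart of the matter.'' Everything up to that point (boundary regularity of $\varphi$, corner behaviour, the single Schwarz reflection $\varphi\circ R_l=\tilde R_l\circ\varphi$ across each side) is fine, but the conclusion of the lemma is then made to rest entirely on the claim that the iterated reflection extension is globally univalent, equivalently that the accessory parameters of the two circular-arc polygon maps coincide --- and this claim is never proved; you only announce that ``univalent-function estimates'' should yield it and proceed ``granting global univalence.'' That step \emph{is} the lemma. Moreover, the route you choose faces a structural obstruction that you acknowledge but do not resolve: for generic angles $\Theta_l$ the pseudogroup generated by the inversions $R_1,\dots,R_m$ is non-discrete, so iterating the reflections does not produce a well-defined single-valued map on any fixed region --- distinct reflection words send overlapping copies of $I$ to images with no a priori consistency, so the ``limit of the extension'' whose injectivity you want to prove is not even a function. (This is why reflection-group arguments work for tangency packings or angles of the form $\pi/n$, where the group is discrete, and break down in the generality of Lemma \ref{L-2-9}.) A telltale symptom: in your argument the hypothesis that the two radius vectors are \emph{equal} is invoked only in the final cosmetic step (to see that the M\"obius map preserves $\mathbb{D}$), whereas it must enter the core estimate --- without it the statement is false, since one can deform a configuration keeping the angles and the conformal type of the interstice while changing the radii.

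The paper's proof is entirely different and avoids iterated reflections. It studies the hyperbolic distortion $M(\phi;z)=\frac{|\phi'(z)|(1-|z|^2)}{1-|\phi(z)|^2}$ of the mark-preserving conformal map $\phi:I\to\tilde I$ and runs a minimum principle on $H=\log M(\phi;\cdot)$ over the closed interstice, in four cases: at an interior minimum, the identities (\ref{E-15})--(\ref{E-16}) (subharmonicity) force $M\ge 1$; at a minimum on a side, the Schiffer--Hawley boundary formula (Lemma \ref{L-9-1}) combined with the equality of radii --- which allows one to normalize $C_l$ and $\tilde C_l$ to be the \emph{same} circle, with equal curvatures --- forces $|\phi'(z_0)|\ge 1$ and again $M\ge 1$; a corner with positive angle is reduced to the side case after finitely many local reflections; a tangency corner is handled by an explicit computation in the $1/z$ chart, where equal radii give parallel strips of equal width. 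Then the symmetry $M(\phi;z)\,M(\phi^{-1};w)\equiv 1$ from (\ref{E-14}), applied to both $\phi$ and $\phi^{-1}$, upgrades $M\ge 1$ to $M\equiv 1$, i.e.\ $\phi^{\star}d\tilde s=ds$, so $\phi$ is the restriction of a hyperbolic isometry and the configurations are isometric. Note that reflections appear there only finitely many times and purely locally (case III), and that the equal-radii hypothesis does quantitative work in every boundary case --- precisely the work that is missing from your proposal.
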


\begin{figure}[htbp]\centering
\includegraphics[width=0.72\textwidth]{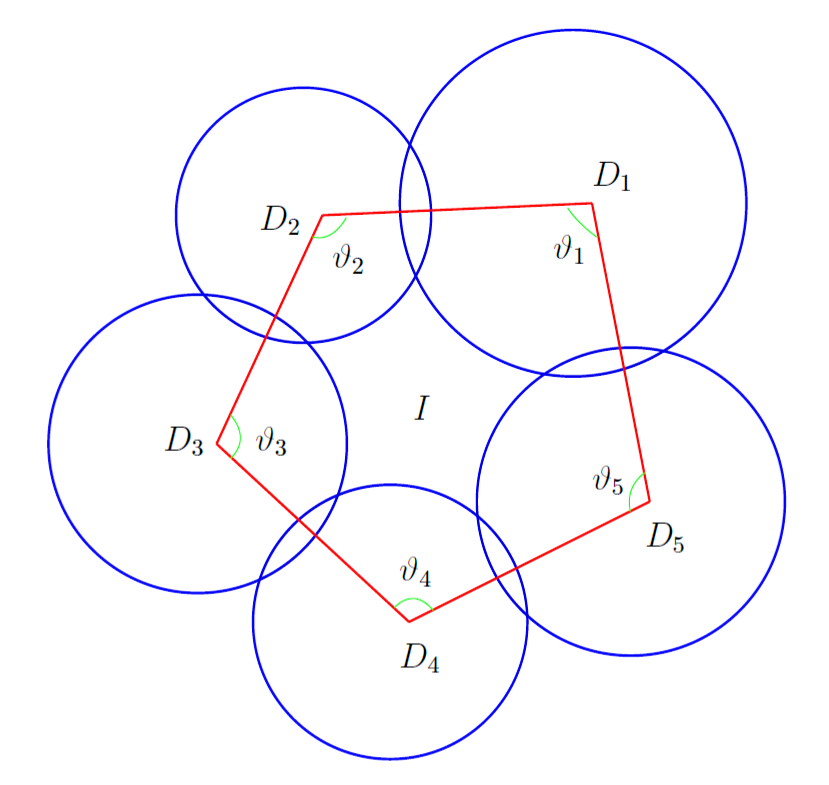}
\caption{A five-circle configuration}
\end{figure}

Let $\vartheta_1,\cdots,\vartheta_m$ be the inner angles of the  polygon of centers of the disks of $\mathcal P$. Fixing the exterior intersection angles and the conformal structure of the interstice, Lemma \ref{L-2-9} implies that $\vartheta_1,\cdots,\vartheta_m$ are well-defined functions of the radius vector $r=(r_1,\cdots,r_m)$.

\begin{lemma}[Maximal principle]\label{L-2-10}
  Suppose that $\mathcal P,\,\tilde{\mathcal P}$ are two $m$-circle configurations in hyperbolic geometry whose interstices are equipped with the same conformal structure. If their exterior intersection angle functions are the same and their radius vectors $r=(r_1,\cdots, r_m)$ and $r=(\tilde{r}_1,\cdots,\tilde{r}_m)$ satisfy
\[
\max_{1\leq \tau\leq m}\Bigg\{\frac{\tanh(r_\tau/2))}{\tanh(\tilde r_\tau/2}\Bigg\}
\,=\,\frac{\tanh(r_i/2)}{\tanh(\tilde r_i/2)}\,>\,1,
\]
then
\[
\vartheta_i(r)\,<\,\vartheta_i(\tilde{r}).
\]
\end{lemma}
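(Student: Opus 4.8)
The plan is to transplant the proof of Lemma~\ref{L-2-3} to the multi-circle setting, the only new ingredient being the $m$-circle counterpart of the derivative relations in Lemma~\ref{L-2-2}. First I introduce the coordinates $u_\tau=\log\tanh(r_\tau/2)$ for $\tau=1,\dots,m$, a strictly increasing diffeomorphism $(0,\infty)\to(-\infty,0)$ satisfying $\partial/\partial u_\tau=\sinh r_\tau\,\partial/\partial r_\tau$. By the rigidity Lemma~\ref{L-2-9}, once the exterior angles and the conformal structure of the interstice are frozen, each $\vartheta_\alpha$ is a well-defined function of $r$, hence of $u$, and applying the implicit function theorem to the defining (welding) equations shows this dependence is smooth. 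In these coordinates the hypothesis reads $\max_\tau(u_\tau-\tilde u_\tau)=u_i-\tilde u_i=:M>0$.

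Next I record the two derivative properties that the argument requires, namely the $m$-circle analogue of Lemma~\ref{L-2-2} (to be supplied by the appendix on multi-circle configurations): (P1) the off-diagonal non-negativity $\partial\vartheta_i/\partial u_\beta\ge 0$ for every $\beta\ne i$; and (P2) the symmetry $\partial\vartheta_\alpha/\partial u_\beta=\partial\vartheta_\beta/\partial u_\alpha$ together with the negativity
\[
\sum_{\beta=1}^m\frac{\partial\vartheta_i}{\partial u_\beta}\,=\,\frac{\partial(\vartheta_1+\cdots+\vartheta_m)}{\partial u_i}\,<\,0.
\]
The symmetry in (P2) is precisely the integrability condition for the $1$-form $\sum_\alpha\vartheta_\alpha\,du_\alpha$, which in the $u$-coordinate is exactly relation $\langle a\rangle$ of Lemma~\ref{L-2-2}; the negativity is the analogue of the last inequality of $\langle b\rangle$, and it has a transparent meaning: the polygon of centers is a geodesic $m$-gon, so Gauss--Bonnet gives $\vartheta_1+\cdots+\vartheta_m=(m-2)\pi-\area$, whence the inequality is equivalent to $\partial(\area)/\partial u_i>0$, i.e. enlarging one disk enlarges the polygon of centers.

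Granting (P1)--(P2), the mean value argument runs verbatim. For $t\in[0,1]$ set $u(t)=tu+(1-t)\tilde u$, let $r(t)$ be the corresponding radius vector, and put $F(t)=\vartheta_i\big(r(t)\big)$, so $F(1)=\vartheta_i(r)$ and $F(0)=\vartheta_i(\tilde r)$. There is $\xi\in(0,1)$ with
\[
\vartheta_i(r)-\vartheta_i(\tilde r)\,=\,F'(\xi)\,=\,\sum_{\beta=1}^m(u_\beta-\tilde u_\beta)\frac{\partial\vartheta_i}{\partial u_\beta}\big(u(\xi)\big).
\]
The term $\beta=i$ equals $M\,\partial\vartheta_i/\partial u_i$ exactly; for $\beta\ne i$ one has $0\le u_\beta-\tilde u_\beta\le M$ and $\partial\vartheta_i/\partial u_\beta\ge 0$ by (P1), so that term is at most $M\,\partial\vartheta_i/\partial u_\beta$. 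Summing and using (P2),
\[
F'(\xi)\,\le\,M\sum_{\beta=1}^m\frac{\partial\vartheta_i}{\partial u_\beta}\big(u(\xi)\big)\,=\,M\,\frac{\partial(\vartheta_1+\cdots+\vartheta_m)}{\partial u_i}\big(u(\xi)\big)\,<\,0,
\]
because $M>0$. Hence $\vartheta_i(r)<\vartheta_i(\tilde r)$, as asserted.

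The essential difficulty lies entirely in (P1)--(P2). In the three-circle case these are local computations within a single triangle (Lemma~\ref{L-2-2}); here the obstacle is that, since the conformal structure of the interstice is held fixed, $\vartheta_i$ genuinely depends on \emph{all} the radii, including those of non-adjacent disks, so neither the off-diagonal non-negativity nor the symmetry can be read off from one triangle. I expect to extract them from a potential/variational description of the welded $m$-circle configuration (the content of the appendix on univalent functions and multi-circle configurations): the symmetry is the closedness of $\sum_\alpha\vartheta_\alpha\,du_\alpha$, the sign in (P1) reflects the monotone response of the welding map to a dilation of one boundary arc, and the Gauss--Bonnet identity furnishes the negativity of the total angle derivative.
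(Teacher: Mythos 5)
Your mean value argument is fine as far as it goes, but it reduces the lemma to the assertions (P1)--(P2), and these are precisely where the entire difficulty lives; they are never established, neither in your proposal nor anywhere in the paper. In the three-circle case the symmetry and sign conditions of Lemma~\ref{L-2-2} come from explicit local computations inside a single triangle. Once the conformal structure of the interstice is frozen, $\vartheta_i$ depends on all the radii through a conformal welding constraint, so there is no local formula to differentiate: even the smoothness of $\vartheta_i$ in $r$ (which you invoke via an unspecified ``implicit function theorem on the welding equations'') is nontrivial, the claimed symmetry $\partial\vartheta_\alpha/\partial u_\beta=\partial\vartheta_\beta/\partial u_\alpha$ is not the closedness of any 1-form that anyone has exhibited, and your Gauss--Bonnet reduction of the negativity condition to ``enlarging one disk enlarges the polygon of centers'' both presupposes that symmetry and replaces one unproved monotonicity statement by another (the other disks must reposition themselves to preserve the welding data, so the area response is not obvious). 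Your hope that the appendix ``on univalent functions and multi-circle configurations'' supplies (P1)--(P2) is misplaced: that appendix does not contain any derivative relations for $m$-circle configurations.

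What the paper actually does is avoid derivatives altogether and compare the two configurations directly through the mark-preserving conformal map $\phi: I\mapsto\tilde I$. The angle $\vartheta_i$ is split as $\vartheta_{i0}+\vartheta_{i1}+\vartheta_{i2}$, where $\vartheta_{i1},\vartheta_{i2}$ are the flank angles formed with the two adjacent disks and $\vartheta_{i0}$ is the angle subtended by the $i$-th boundary arc of the interstice. The flank angles are handled by the two-circle monotonicity Lemma~\ref{L-2-7}. For $\vartheta_{i0}$, one runs an extremum principle on the distortion quantity $M(\phi;z)$ (the same four-case analysis as in the proof of Lemma~\ref{L-2-9}, resting on the Schiffer--Hawley formula, Lemma~\ref{L-9-1}) to get the lower bound $M(\phi;z)\geq\tanh(\tilde r_i/2)/\tanh(r_i/2)$, and then integrates the hyperbolic length element along the $i$-th arc: since $\sinh r_i\,\vartheta_{i0}=\int_{\gamma_i}ds$ and $\sinh\tilde r_i\,\tilde\vartheta_{i0}=\int_{\tilde\gamma_i}d\tilde s$, the bound yields $\vartheta_{i0}\leq\bigl(\cosh^2(\tilde r_i/2)/\cosh^2(r_i/2)\bigr)\tilde\vartheta_{i0}<\tilde\vartheta_{i0}$. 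If you want to salvage your route, you would first have to prove (P1)--(P2) for welded configurations, which is a substantially harder (and to my knowledge open) problem than the comparison statement itself.
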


\begin{lemma}\label{L-2-11}
 $\vartheta_i\rightarrow 0$ , as $r_i\to \infty$.
 \end{lemma}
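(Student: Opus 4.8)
The plan is to compare $\vartheta_i$ with the central angle subtended at the center of $D_i$ by the $i$-th side of the interstice, and to show that this central angle, together with the discrepancy between the two, tends to $0$ as $r_i\to\infty$ (with the remaining radii, the angles $\Theta$, and the conformal structure of the interstice all held fixed). Write $c_1,\dots,c_m$ for the centers of the disks, so that $\vartheta_i=\angle c_{i-1}c_ic_{i+1}$ (indices mod $m$). Let $p=\partial D_{i-1}\cap\partial D_i$ and $q=\partial D_i\cap\partial D_{i+1}$ be the two endpoints of the arc of $\partial D_i$ that bounds the interstice, and set $\psi=\angle p\,c_i\,q$, $\alpha=\angle c_{i-1}c_i\,p$, $\beta=\angle c_{i+1}c_i\,q$. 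Since angular distance is a metric on the directions at $c_i$, the triangle inequality gives $|\vartheta_i-\psi|\le\alpha+\beta$, so it suffices to prove $\alpha,\beta,\psi\to0$.

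First I would control $\alpha$ and $\beta$ using only the two adjacent disks. The triangle $c_{i-1}c_i\,p$ has $\dist(c_i,p)=r_i$, $\dist(c_{i-1},p)=r_{i-1}$, and $\dist(c_{i-1},c_i)=d_{i-1,i}$, where $\cosh d_{i-1,i}=\cosh r_{i-1}\cosh r_i+\sinh r_{i-1}\sinh r_i\cos\Theta_{i-1}$; as $r_{i-1}$ and $\Theta_{i-1}$ are fixed, $d_{i-1,i}\to\infty$ when $r_i\to\infty$. The hyperbolic law of sines then yields $\sin\alpha\le \sinh r_{i-1}/\sinh d_{i-1,i}\to0$, and symmetrically $\beta\to0$. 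For the central angle, the isosceles triangle $p\,c_i\,q$ gives $\sinh(L/2)=\sinh r_i\,\sin(\psi/2)$ with $L=\dist(p,q)$, hence $\sin(\psi/2)=\sinh(L/2)/\sinh r_i$, and $\psi\to0$ the moment $L$ stays bounded.

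The main obstacle is therefore to show that the $i$-th side of the interstice does not grow, i.e. that $L=\dist(p,q)$ remains bounded. This is exactly where the fixed data must be used: the interstice is a hyperbolic curvilinear $m$-gon whose interior angles are prescribed by $\Theta$ and whose conformal type is fixed, while the geodesic curvature of its $i$-th side is $\coth r_i\to1$. After normalizing away the ambient isometries (say by pinning down $D_{i+1}$), I would argue that such interstices form a precompact family: were $p$ or $q$ to escape to infinity, or to the ideal boundary, along some sequence $r_i\to\infty$, the conformal modulus of the interstice would degenerate, contradicting that it is held fixed. Granting this precompactness, $L$ is bounded and the previous step closes the argument. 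Conceptually the limit is transparent, since as $r_i\to\infty$ the disk $D_i$ converges to a horoball and its center $c_i$ escapes to the ideal center $\xi$, so the geodesic segments $c_ic_{i-1}$ and $c_ic_{i+1}$ become asymptotic to the ray aimed at $\xi$ and the angle between them collapses. I would also record that Lemma \ref{L-2-10} makes $\vartheta_i$ monotone decreasing in $r_i$, so $\lim_{r_i\to\infty}\vartheta_i$ exists a priori; thus only the \emph{vanishing} of this limit, and not mere convergence, relies on the compactness argument above.
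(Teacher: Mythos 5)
Your decomposition is exactly the paper's: your $\psi,\alpha,\beta$ are the paper's $\vartheta_{i0},\vartheta_{i1},\vartheta_{i2}$ (the paper even has the exact identity $\vartheta_i=\vartheta_{i0}+\vartheta_{i1}+\vartheta_{i2}$ rather than a triangle inequality), and your law-of-sines treatment of $\alpha,\beta$ is the paper's appeal to Lemma \ref{L-2-8}. (Minor point: $\sin\alpha\to0$ alone leaves the alternative $\alpha\to\pi$; the law of cosines, $\cos\alpha\ge 1-\cosh r_{i-1}/(\cosh r_i\cosh d_{i-1,i})$, closes that loophole.) The genuine gap is your fourth step, which you yourself flag as the main obstacle and then defer: the boundedness of $L$. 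Your proposed mechanism --- ``were $p$ or $q$ to escape to infinity, or to the ideal boundary, the conformal modulus of the interstice would degenerate'' --- is asserted, not proved, and it is not even clearly true as a heuristic: a region can have boundary reaching the ideal circle while carrying a perfectly finite marked conformal structure (an ideal quadrilateral bounded by four geodesics has finite modulus), so ``escape to the ideal boundary'' does not by itself degenerate the conformal class. Controlling the central angle $\vartheta_{i0}$ is precisely the analytic content of the lemma, and the paper does not use any compactness for it: it invokes the quantitative estimate (\ref{E-24}), $\vartheta_{i0}\le\frac{\cosh^2(\tilde r_i/2)}{\cosh^2(r_i/2)}\,\tilde\vartheta_{i0}$, obtained by comparing with a \emph{fixed} reference configuration $\tilde{\mathcal P}$ carrying the same conformal structure; that estimate rests on the lower bound (\ref{E-22}) for $M(\phi;z)$, proved via the boundary maximum principle and the Schiffer--Hawley formula (Lemma \ref{L-9-1}). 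In other words, the machinery of Appendices 2--3 is exactly the ingredient your sketch presupposes; as written, you have shown only that \emph{if} $L$ stays bounded then $\vartheta_i\to0$.

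It is worth recording that the gap can be closed much more cheaply than by either your compactness claim or the paper's conformal estimates, with no use of the conformal structure at all. Traverse $\partial I$ with $I$ on the left: each side $\gamma_\tau$ is an arc of a circle of radius $r_\tau$ whose disk lies on the other side, so its geodesic curvature relative to $I$ is $-\coth r_\tau$, and the interior angle of $I$ at each corner equals the exterior intersection angle there. Gauss--Bonnet then gives
\[
\area(I)\;+\;\sum_{\tau=1}^{m}\coth r_\tau\,\ell(\gamma_\tau)\;+\;\sum_{\tau=1}^{m}\Theta_\tau\;=\;(m-2)\pi,
\]
whence $\ell(\gamma_i)\le\sum_\tau\ell(\gamma_\tau)\le(m-2)\pi$ uniformly. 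Since $\gamma_i$ is an arc of a circle of radius $r_i$, we get $\vartheta_{i0}=\ell(\gamma_i)/\sinh r_i\le(m-2)\pi/\sinh r_i\to0$ directly (no need even to pass through the chord $L$). So your outline is repairable, but by a different idea than the one you propose; the step you labeled ``granting this precompactness'' is, in the proposal as it stands, a conjecture rather than a proof.
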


The proofs of the above three lemmas are involved and based on some delicate estimates on univalent functions. For details, please refer to Appendix 2 and Appendix 3.

\begin{lemma}\label{L-2-12}
Given a family of  $m$-circle configurations $\{\mathcal P_r \}$  with the interstices $\{I_r\}$ sharing the same conformal structure, let $r=(r_1,\cdots,r_m)$ be the radius vector of $\mathcal P_r$. If $r_i\to 0$ for some $i\in\{1,\cdots,m\}$, then $I_r$ degenerates to a point.
\end{lemma}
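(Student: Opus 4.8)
The plan is to argue by contradiction and to reduce the statement to a single distortion estimate for conformal maps. I work in the Poincar\'e disk $\mathbb{D}$ and measure the size of the interstice by its hyperbolic diameter $\diam I_r$, which is invariant under the ambient isometries; ``degenerates to a point'' then means $\diam I_r\to 0$. Since all the $I_r$ carry the \emph{same} (nondegenerate) conformal type, I fix once and for all a model $m$-gon $\hat I=\mathbb{D}$ with distinct marked boundary points $\hat v_1,\dots,\hat v_m$, and I write the realization as a conformal, mark-preserving map $F_r\colon \hat I\to I_r$ sending $\hat v_j$ to the vertex $v_j$ of $I_r$ and the boundary arc $\hat s_j$ to the $j$-th side $s_j$ (an arc of the circle of radius $r_j$). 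Proving $\diam I_r\to 0$ amounts to controlling the ``local scale'' of $F_r$ across the fixed domain $\hat I$.

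First I would record the local effect of $r_i\to 0$. The side $s_i$ is an arc of a hyperbolic circle of radius $r_i$, so its length is at most the circumference $2\pi\sinh r_i\to 0$; hence the two vertices $v_{i-1},v_i$ flanking $s_i$ satisfy $\dist(v_{i-1},v_i)\to 0$. (From the intersection-angle relation between two hyperbolic circles one also sees that the shrinking circle converges to a point $p$ lying on both neighbouring circles, so that in the limit $s_{i-1}$ and $s_{i+1}$ meet at $p$ and the two marked corners coalesce.) In other words, the local scale of $F_r$ near the arc $\hat s_i$ tends to $0$.

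The heart of the argument is then the following propagation principle: because the domain $\hat I$ and its conformal type are held fixed, the local scale of the univalent map $F_r$ on one part of $\hat I$ is comparable, with constants independent of $r$, to its local scale on any other part. Granting this, a scale tending to $0$ near $\hat s_i$ forces the scale to tend to $0$ uniformly over $\hat I$, whence $\diam I_r\to 0$. This is precisely the mechanism by which a fixed conformal modulus refuses a ``thin'' realization: a conformal quadrilateral of fixed modulus with one side shrunk to zero must shrink altogether, just as a Euclidean rectangle of fixed aspect ratio does.

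The hard part will be to make this comparison rigorous up to and including the boundary sides, since the classical Koebe distortion bounds degenerate as one approaches $\partial\hat I$. Here I would exploit that each side is a circular arc meeting its neighbours at the prescribed interior angle $\Theta_j$, so that $F_r$ reflects analytically across the sides and the boundary scale is controlled by the interior scale at a fixed conformal distance; these are exactly the univalent-function estimates developed in the Appendices. Equivalently, one may package the contradiction through extremal length: the extremal distance in $I_r$ between $s_i$ and a fixed conformal arc is pinned to a finite value by the common conformal structure, yet, once $s_i$ collapses to $p$ while a macroscopic part of $I_r$ survives, a length--area (annulus) estimate centred at $p$ forces this same modulus to blow up. A normal-families formulation yields the same dichotomy: after normalizing $G_r=g_r\circ F_r$ by an isometry so that $G_r(0)=0$, any locally uniform limit of the univalent maps $G_r$ is either univalent---impossible, since a nondegenerate limit would keep $v_{i-1},v_i$ apart---or constant---forcing $\diam I_r\to 0$; either alternative gives the claim.
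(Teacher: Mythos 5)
Your proposal assembles three strategies, but the one that comes closest to a proof (the normal-families dichotomy at the end) breaks precisely at the decisive step, and it breaks in a way the paper's proof is specifically engineered to avoid. After normalizing, you must rule out a \emph{nonconstant} locally uniform limit of the univalent maps, and you do so by asserting that a univalent limit ``would keep $v_{i-1},v_i$ apart.'' But $v_{i-1},v_i$ are boundary values of $F_r$ at marked boundary points of $\hat I$, and locally uniform convergence on the \emph{open} domain gives no control whatsoever on boundary values: a sequence of univalent maps can converge locally uniformly to a univalent map while designated boundary points collide (conformal maps onto a disk with a shrinking slit already show this). The paper's proof supplies exactly the missing device. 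It takes the mark-preserving conformal maps $\varphi_r\colon I\to I_r$ from a \emph{fixed} reference interstice, extends them by Schwarz reflection across the $i$-th circle to $\check{\varphi}_r\colon\check I\to\check I_r$, so that part of the disk $\mathbb{D}_i$ bounded by $C_i$ becomes \emph{interior} to $\check I$; then a fixed disk $\mathbb{D}(z_0,\varrho)\subset\mathbb{D}_i\cap\check I$ has image inside the shrinking disk $\mathbb{D}_{r,i}$, so Cauchy's integral formula forces $\check{\varphi}_r'(z_0)\to 0$, which contradicts the Hurwitz/Rouch\'e statement that a nonconstant limit of univalent maps is univalent and hence has nonvanishing derivative. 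In other words, the hypothesis $r_i\to 0$ must first be converted into an interior derivative estimate before it can confront the Hurwitz dichotomy; your argument never makes this conversion. (You do invoke reflection across the circular sides, but only inside the unproved ``propagation principle,'' not in the normal-families argument where it is actually needed.)

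The other two routes have their own gaps. The ``propagation principle'' --- that the local scale of $F_r$ is comparable across $\hat I$ with constants independent of $r$, up to and including the boundary and the vertices --- is not a citable black box; it is essentially a restatement of the lemma, and the boundary/vertex cases (angles $\Theta_j$, cusps when $\Theta_j=0$) that you defer are the entire content. The extremal-length variant fails because the second plate of your condenser is not pinned: if $\diam I_r\not\to 0$, it does \emph{not} follow that the image of your ``fixed conformal arc'' stays macroscopic or stays away from the collapse point $p$; it may itself migrate to $p$, in which case both plates collapse together and the annulus estimate yields no blow-up of the modulus, so no contradiction. (A condenser argument can likely be made to work, but it requires a different choice of plates and a case analysis over which neighboring disks also degenerate --- none of which is sketched.) One mitigating remark: your final step, that a constant limit forces $\diam I_r\to 0$, is asserted with the same brevity as in the paper itself, so the genuine differential gap is the treatment of the nonconstant case described above.
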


\begin{proof}
  Without loss of generality, suppose that the $i$-th circle $C_{r,i}$ of $\mathcal P_r$ centers at the origin. Choose $\mathcal P\in \{\mathcal P_r\}$. Let $I$ and $C_i$ denote the interstice and $i$-th circle of $\mathcal P$. Because $\{I_r\}$ share the same conformal structure, there exists a mark-preserving conformal map $\varphi_r: I\mapsto I_r$. By Schwarz's reflection principle, we can extend it to $\check{\varphi}_r: \check{I}\mapsto \check{I}_r$ by reflections cross $C_i$ and $C_{r,i}$.

 For holomorphic function family $\{\check{\varphi}_{r}\}$, using Montel's theorem, we extract subsequence $\{\check{\varphi}_{r_k}\}$ such that
\[
\check{\varphi}_{r_k}\,\to\, \varphi_\ast\qquad \text{or} \qquad \bar{\varphi}_{r_k}\,\to\, \text{const},
\]
where  $\varphi_\ast$ is a non-constant holomorphic function.

The first case does not occur. Otherwise, pick $\mathbb{D}(z_0,\varrho)\subset \mathbb D_i\cap \check{I}$. Then the images $\check{\varphi}_r\big(\mathbb{D}(z_0,\varrho)\big)$ is contained in $\mathbb{D}_{r,i}\cap\check{I}_r$. Here $\mathbb D_i$ and $\mathbb{D}_{r,i}$ are the open disks bounded by $C_i$ and $C_{r,i}$. As $r_i\to 0$, it follows from Cauchy's integral formula that
\[
|\check{\varphi}'_{r}(z_0)|\,\to\, 0.
\]
Consequently,
\[
\varphi_\ast'(z_0)\,=\,0.
\]
On the other hand, because each $\check{\varphi}_r$ is one-to-one, Rouch\'{e}'s theorem implies that  $\varphi_\ast$ is also one-to-one. Hence the Jacobian is positive. Namely,
\[
|\varphi_\ast'(z)|\,>\,0,\;\forall\; z\in \check{I}.
\]
This leads to a contradiction.

 As a result, $\check{\varphi}_{r_k}$  tends to a constant. Moreover, similar argument implies that any other convergent subsequence of $\{\check{\varphi}_r\}$ tends to the same constant. Thus we show that the interstice $ I_r$ degenerates to a  point.
\end{proof}

A $m$-circle configuration is called ideal if its interstice consists of only one point.
\begin{lemma}\label{L-2-13}
Lemma \ref{L-2-9}, Lemma \ref{L-2-10} and Lemma \ref{L-2-11} hold for ideal $m$-circle configurations.
 \end{lemma}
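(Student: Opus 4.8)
The goal is to transfer Lemma \ref{L-2-9}, Lemma \ref{L-2-10} and Lemma \ref{L-2-11} from the setting of honest $m$-circle configurations (with nondegenerate interstice) to the \emph{ideal} case, where the interstice has collapsed to a single point. The plan is to realize an ideal configuration as a limit of nondegenerate ones and then push the three properties through the limit, using the degeneration already controlled by Lemma \ref{L-2-12}. First I would fix the combinatorial data: the number of sides $m$, the exterior intersection angle function, and—crucially—the conformal structure of the interstice, which in the ideal case is the trivial (point) structure. I would approximate an ideal $m$-circle configuration $\mathcal P$ by a family $\{\mathcal P^{(n)}\}$ of nondegenerate $m$-circle configurations sharing the same angle data, whose interstices carry conformal structures $[h_n]$ converging in $T(I)$ to the point structure, and whose radius vectors $r^{(n)}$ converge to the radius vector $r$ of $\mathcal P$. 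Lemma \ref{L-2-12} guarantees that such an approximating family does degenerate to a point, so the target ideal configuration is indeed the limit of the approximants.

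For rigidity (Lemma \ref{L-2-9}), suppose $\mathcal P,\tilde{\mathcal P}$ are two ideal $m$-circle configurations with the same radius vector and the same angle function. The inner angles $\vartheta_1,\dots,\vartheta_m$ of the polygon of centers are determined by the hyperbolic trigonometry of the surrounding disks and their pairwise intersection angles; since the interstice is a single point, the usual closing-up condition (that the angles $\vartheta_\tau$ sum correctly and the disk chain closes up around the point) is an extra constraint. I would argue that, given the radii and the exterior angles, the cyclic chain of disks around a common ideal point is rigid: each successive disk is determined up to isometry by the previous one and the prescribed intersection angle, and the point structure forces the chain to close with no remaining modulus. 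Concretely, I would take the approximating families for $\mathcal P$ and $\tilde{\mathcal P}$, apply the nondegenerate rigidity of Lemma \ref{L-2-9} to conclude the approximants are isometric once their (now common) conformal moduli agree, and pass to the limit; uniqueness of the limit point structure removes the ambiguity.

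For the maximal principle (Lemma \ref{L-2-10}) and the limit $\vartheta_i\to 0$ as $r_i\to\infty$ (Lemma \ref{L-2-11}), the strategy is the same continuity/limit argument: both statements are inequalities or limits involving $\vartheta_i$ as a function of the radius vector, and both hold for every nondegenerate approximant by Lemma \ref{L-2-10} and Lemma \ref{L-2-11} respectively. Since $\vartheta_i$ depends continuously on the radius vector and on the conformal modulus of the interstice, and the ideal configuration is the limit as $[h_n]$ tends to the point structure, the nonstrict passage to the limit is immediate. The only delicate point is that the maximal principle asserts a \emph{strict} inequality $\vartheta_i(r)<\vartheta_i(\tilde r)$, and naive limits of strict inequalities are only weak. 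To recover strictness I would not take a limit of the conclusion but instead reprove the monotonicity directly for the ideal configuration, using that the derivative identities and sign conditions of the three-circle and multi-circle lemmas (the analogues of Lemma \ref{L-2-2}$\langle b\rangle$) survive the degeneration: the key monotonicity $\partial\vartheta_i/\partial r_i<0$ and the mixed positivity remain \emph{strict} in the ideal limit because they are determined by the local hyperbolic geometry of the individual disks, which does not degenerate even though the interstice collapses.

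The main obstacle I anticipate is precisely this preservation of \emph{strict} inequalities under degeneration. Establishing that the sign conditions governing the monotonicity of $\vartheta_i$ remain strict at the boundary of moduli space—rather than merely weak—requires checking that the univalent-function estimates underlying Lemma \ref{L-2-10} do not degenerate as the interstice shrinks to a point. I expect this to reduce to verifying that the relevant conformal derivative (the analogue of $|\varphi_\ast'|$ in the proof of Lemma \ref{L-2-12}) stays bounded away from zero on the closed disks surrounding the ideal point, so that the differential inequalities governing $\vartheta_i$ retain their strict sign; this is exactly where the machinery of Appendix 2 and Appendix 3 must be invoked in the limiting regime.
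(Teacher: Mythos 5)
Your overall strategy---realize the ideal configuration as a limit of nondegenerate ones and push Lemmas \ref{L-2-9}--\ref{L-2-11} through the limit---is not the paper's, and it has two concrete gaps. First, your appeal to Lemma \ref{L-2-12} is a misapplication: that lemma assumes a family of configurations whose interstices carry a \emph{fixed} conformal structure and concludes degeneration of the interstice when some radius $r_i\to 0$; in your setup the radii converge to the (positive) radius vector of the ideal configuration, and it is the conformal structures $[h_n]$ that are supposed to degenerate. Lemma \ref{L-2-12} says nothing about that regime, and in fact ``the point structure'' is not an element of $T(I)$ at all, so ``$[h_n]$ converging in $T(I)$ to the point structure'' is not meaningful; to produce your approximating family you would first need an existence and deformation theory for nearly ideal configurations, which is essentially what is being established. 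Second, as you yourself note, the strict inequality in the maximal principle (Lemma \ref{L-2-10}) does not survive a limit, and your proposed repair---that the strict derivative sign conditions ``survive the degeneration''---is exactly the unproven point: the multi-circle maximal principle is proved in Appendix 3 via univalent-function estimates (Schiffer--Hawley), not via the derivative identities of Lemma \ref{L-2-2}, and no version of those estimates is available for a collapsing interstice. Deferring this to ``the machinery of Appendix 2 and Appendix 3 in the limiting regime'' leaves the main difficulty unaddressed.

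The paper's proof avoids all of this with one elementary observation you are missing: in an ideal configuration every disk adjacent to the interstice passes through the single interstice point, so joining that common point to the centers of the disks splits the configuration into $m$ pieces, each of which is a two-circle configuration (FIGURE 7). Each inner angle $\vartheta_i$ is then a sum of angles computed purely from two-circle geometry, and rigidity, the (strict) maximal principle, and the limit $\vartheta_i\to 0$ as $r_i\to\infty$ follow immediately from Lemma \ref{L-2-5}, Lemma \ref{L-2-7} and Lemma \ref{L-2-8}, with strictness inherited from the already-strict two-circle statements. No limiting argument, and no univalent-function machinery, is needed.
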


The proof is left to Appendix 3.

\section{A sketch proof of Theorem \ref{T-1-2}}
Theorem \ref{T-1-2} plays a crucial role in the proof of our main result. For completeness,  in this section we give a sketch proof of this theorem.

\subsection{Thurston's construction} Recall that $S$ is a compact oriented surface of genus $g>1$ and $\mathcal{T}$ is a triangulation of $S$ with the sets of vertices, edges and triangles $V, E, F$. Let $r\in \mathbb R_{+}^{|V|}$ be a radius vector, which assigns each vertex $v\in V$ a positive number $r(v)$. Then $r$ together with the weight  function $\Theta: E\mapsto [0,\,\pi)$ in Theorem \ref{T-1-2}  determines a hyperbolic cone metric structure on $S$ as follows.

For each triangle $\triangle(v_i,v_j,v_k)$ of $\mathcal{T}$ with vertices $v_i,v_j,v_k$,  we associate the triangle determined by the centers of three mutually intersecting disks with hyperbolic radii
\[
 r(v_i),\,r(v_j),\,r(v_k)\,
 \]
 and exterior intersection angles
 \[
 \Theta\big([v_i,v_j]\big),\,\Theta\big([v_j,v_k]\big),\,\Theta\big([v_k,v_i]\big).
 \]
By Lemma \ref{L-2-1}, the above procedure works well.

Gluing all of these hyperbolic triangles produces a metric surface $(S,\mu(r))$ which is locally hyperbolic with possible cone type singularities at the vertices. For each $v\in V$, the discrete curvature $k(v)$ is defined as
\[
k(v)\;=\;2\pi\ -\ \sigma(v),
\]
where $\sigma(v)$  denotes the cone angle at $v$. More precisely, $\sigma(v)$ is equal to the sum of all inner angles having vertex $v$.

Write $k(v)$ as $k(v)(r)$, then $k(v)$ is a smooth function $r$. If there exists a radius vector $r^0$ such that $k(v)(r^0)=0$ for all $v\in V$, then the cone singularities turn into smooth. For every $v\in V$, on $(S,\mu(r^0))$ drawing the disk centering at $v$ with radius $r^{0}(v)$, we will obtain a circle pattern realizing $(G,\Theta)$. Consider the following curvature map
\[
\begin{aligned}
Th:\quad\qquad &\,\mathbb{R}_{+}^{|V|} \quad
&\,\longmapsto \,\;\qquad\qquad &\,\mathbb{R}^{|V|} \\
\big(\,r(v_1),\, &r(v_2),\,\cdots\,\big)&\longmapsto\;\quad\big(\,k(v_1),\, &k(v_2),\,\cdots\,\big).\\
  \end{aligned}
  \]
It remains to prove that the origin $\emph{O}=(0,\cdots,0)\in \mathbb R^{|V|}$ belongs to the image of the map $Th$.

\subsection{Continuity method} To achieve the goal, a key step is to give a description of the image set. Let $Y\subset \mathbb{R}^{|V|}$ be the convex set characterized by the following system of inequalities
\begin{equation}\label{E-6}
k(v)\ <\ 2\pi, \; \forall v\,\in V,
\end{equation}
and
\begin{equation}\label{E-7}
\sum_{v \in V_0}k(v) >\sum_{(e,v)\in Lk(V_0)}\big(\Theta(e)-\pi\big)+2\pi\chi\big(CK(V_0)\big)
\end{equation}
for any non-empty subset $V_0$ of $V$.
Here $CK(V_0)$, with the Euler characteristic $\chi\big(CK(V_0)\big)$, denotes the union of these cells of $\mathcal T$ having at least one vertex in $V_0$, and $Lk(V_0)$ denotes the set of pairs $(e, v)$ of an edge $e$ and a vertex $v$ satisfying: $(i)$ the end vertices of $e$ are not in $V_0$; $(ii)$ $v$ is in $V_0$; $(iii)$ $e$ and $v$ form a triangle of $\mathcal{T}$. We mention that the cells of $\mathcal{T}$ include the vertices ($0$-cells), the open edges ($1$-cells) and the open triangles ($2$-cells) of $\mathcal{T}$.

\begin{proposition}\label{P-2-4}
$Th(\mathbb{R}_{+}^{|V|})=Y$.
\end{proposition}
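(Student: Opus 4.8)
The plan is to prove the two inclusions $Th(\mathbb{R}_{+}^{|V|})\subseteq Y$ and $Y\subseteq Th(\mathbb{R}_{+}^{|V|})$ separately, handling the second by a continuity (openness--closedness) argument once $Y$ is seen to be open, convex and connected. It is convenient to pass to the logarithmic radii $u(v)=\log\tanh(r(v)/2)$, which identify $\mathbb{R}_{+}^{|V|}$ with $(-\infty,0)^{|V|}$; by part $\langle a\rangle$ of Lemma \ref{L-2-2} and the chain rule $\partial/\partial u=\sinh r\,\partial/\partial r$, the Jacobian matrix $\big(\partial k(v)/\partial u(w)\big)_{v,w\in V}$ is then symmetric.

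For the inclusion into $Y$, the bound \eqref{E-6} is immediate: the cone angle $\sigma(v)$ is a sum of strictly positive inner angles, so $k(v)=2\pi-\sigma(v)<2\pi$. For \eqref{E-7} I would first record the elementary per-triangle estimate $\vartheta_i<\pi-\Theta_i$, valid in every triangle of the construction: since $\partial\vartheta_i/\partial r_i<0$ by part $\langle b\rangle$ of Lemma \ref{L-2-2} and $\vartheta_i\to\pi-\Theta_i$ as $r_i\to 0$ by Lemma \ref{L-2-4}, the angle $\vartheta_i$ at a vertex stays strictly below $\pi-\Theta_i$, where $\Theta_i$ is the exterior angle of the opposite edge. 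Fixing a non-empty $V_0\subseteq V$, I would then classify the triangles of $CK(V_0)$ by the number of their vertices lying in $V_0$: a triangle with two or three vertices in $V_0$ contributes an angle sum $<\pi$ to $\sum_{v\in V_0}\sigma(v)$, while a triangle meeting $V_0$ in a single vertex $v$ contributes $\vartheta_v<\pi-\Theta(e)$, its opposite edge $e$ being exactly the edge for which $(e,v)\in Lk(V_0)$. Summing these bounds gives $\sum_{v\in V_0}\sigma(v)<\pi N_T-\sum_{(e,v)\in Lk(V_0)}\Theta(e)$, where $N_T$ is the number of triangles in $CK(V_0)$; feeding this into $\sum_{v\in V_0}k(v)=2\pi|V_0|-\sum_{v\in V_0}\sigma(v)$ and simplifying with the Euler-characteristic identity $2|V_0|-N_T+|Lk(V_0)|=2\chi(CK(V_0))$ for the subcomplex produces precisely \eqref{E-7}. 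The bookkeeping in this Euler count is the only delicate point of this direction; I would double-check it on the case $V_0=V$, where it reduces to $2|V|-|F|=4-4g$.

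For the reverse inclusion I would run Thurston's continuity method. The set $Y$ is convex, hence connected, being the intersection of the half-spaces \eqref{E-6}--\eqref{E-7}. The symmetry from part $\langle a\rangle$ of Lemma \ref{L-2-2} together with the signs in part $\langle b\rangle$ shows that $\big(\partial k(v)/\partial u(w)\big)$ has positive diagonal, non-positive off-diagonal entries and strictly positive row sums, hence is strictly diagonally dominant and positive definite. Consequently $Th$ is a local diffeomorphism, so $Th(\mathbb{R}_{+}^{|V|})$ is open (and therefore open in $Y$ once the first inclusion is known); positive definiteness also exhibits $Th$ as the gradient of a strictly convex function, which yields global injectivity and hence the uniqueness clause of Theorem \ref{T-1-2}. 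It then remains only to show that $Th(\mathbb{R}_{+}^{|V|})$ is closed in $Y$, for then connectedness of $Y$ forces equality.

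The closedness is the main obstacle, and it is where the full strength of Lemma \ref{L-2-4} is needed. I would argue by properness of $Th$ onto $Y$: given radii $r_n$ with $k(r_n)\to k^\ast$, I want to show that if $r_n$ degenerates then $k^\ast\in\partial Y$, so that $k^\ast\in Y$ forces $\{r_n\}$ to remain in a compact subset of $\mathbb{R}_{+}^{|V|}$ and hence $k^\ast$ to be attained. Passing to a subsequence, let $V_\infty$ be the vertices whose radii tend to $\infty$ and $V_{0}$ those whose radii tend to $0$. If $V_\infty\neq\varnothing$, then $\vartheta_i\to 0$ at those vertices by Lemma \ref{L-2-4} (or Lemma \ref{L-2-11}), so $\sigma(v)\to 0$ and $k^\ast(v)=2\pi$, landing on the boundary face of \eqref{E-6}. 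If instead $V_0\neq\varnothing$, then the limiting angle identities of Lemma \ref{L-2-4} show that every per-triangle inequality $\vartheta_i<\pi-\Theta_i$ used for the $V_0$-vertices becomes an equality in the limit, so that the inequality \eqref{E-7} associated with this very set $V_0$ degenerates to an equality; thus $k^\ast\in\partial Y$ again. The delicate part here is to organize the various degeneration limits of Lemma \ref{L-2-4} (one radius, two radii, or all three radii in a triangle tending to $0$) so that they assemble into the single boundary equality for the chosen $V_0$; once this is done, openness, closedness and connectedness of $Y$ together give $Th(\mathbb{R}_{+}^{|V|})=Y$.
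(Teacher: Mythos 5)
Your proposal is correct, and it shares the paper's overall continuity-method skeleton (the image of $Th$ is shown to be open and closed in the connected set $Y$), but the mechanisms you use for the two central claims are genuinely different from the paper's. The paper gets injectivity from the maximum principle of Lemma \ref{L-2-3} (compare two radius vectors at a vertex maximizing $\tanh(r(v)/2)/\tanh(\tilde{r}(v)/2)$), and then gets openness from Brouwer's invariance of domain. You instead work in the coordinates $u(v)=\log\tanh(r(v)/2)$ and use Lemma \ref{L-2-2}~$\langle a\rangle$--$\langle b\rangle$ to show the Jacobian of $Th$ is symmetric with positive diagonal, non-positive off-diagonal entries and positive row sums (the row-sum computation indeed needs the symmetry $\partial\vartheta_v/\partial u_j=\partial\vartheta_j/\partial u_v$ to convert $\sum_{w}\partial\vartheta_v/\partial u_w$ into $\partial(\vartheta_v+\vartheta_j+\vartheta_k)/\partial u_v$, as you indicate); diagonal dominance gives positive definiteness, hence openness, and symmetry on the convex domain $(-\infty,0)^{|V|}$ makes $Th$ a gradient field whose potential is strictly convex, hence globally injective. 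This is the variational route of Colin de Verdi\`ere/Chow--Luo/Bobenko--Springborn, and it bypasses both invariance of domain and Lemma \ref{L-2-3}. You also prove explicitly the inclusion $Th(\mathbb{R}_{+}^{|V|})\subseteq Y$, which the paper only uses implicitly (deferring to Thurston): your per-triangle bound $\vartheta_i<\pi-\Theta_i$ together with the incidence identity $3N_T=2E_{CK}+|Lk(V_0)|$ (with $N_T$, $E_{CK}$ the numbers of triangles and edges of $CK(V_0)$; each edge of $CK(V_0)$ lies in two triangles of $CK(V_0)$, and each triangle-edge incidence whose edge is outside $CK(V_0)$ is exactly one element of $Lk(V_0)$) yields $2|V_0|-N_T+|Lk(V_0)|=2\chi(CK(V_0))$ and hence \eqref{E-7}; your sanity check $2|V|-|F|=4-4g$ is the right one. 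For properness both you and the paper rely on the same degeneration limits of Lemma \ref{L-2-4} and leave the bookkeeping at sketch level; your organization into $V_\infty$ and $V_0$, disposing of $V_\infty\neq\emptyset$ first so that in the remaining case the other radii may be assumed to converge in $(0,\infty)$, is the correct way to assemble it. As to what each approach buys: yours is more self-contained and gives the convex-potential picture, while the paper's maximum-principle argument is the one that survives the passage to multi-circle configurations (Lemma \ref{L-2-10}), where no variational structure is available --- that is precisely how the rigidity part of Theorem \ref{T-1-5} is proved later in the paper.
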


\begin{proof}
We have the following claims:
\begin{itemize}
\item[$\langle a\rangle$] $Th$ is continuous.
\item[$\langle b\rangle$] $Th$ is injective. Suppose that $r,\tilde{r}$ are two radius vectors giving the same discrete curvatures. We need to show $r=\tilde{r}$. Otherwise, without loss of generality, assume that $v_0\in V$ satisfies
    \[
\max_{v\in V}\Bigg\{\frac{\tanh(r(v)/2)}{\tanh(\tilde r(v)/2)}\Bigg\}
\,=\,\frac{\tanh(r(v_0)/2)}{\tanh(\tilde r(v_0)/2)}\,>\,1.
\]
From Lemma \ref{L-2-3}, it follows that
    \[
  k(v_0)(r)\,>\,k(v_0)(\tilde{r}),
    \]
    which leads to a contradiction.
\item[$\langle c\rangle$] $Th$ is proper. It suffices to check that one of the inequalities in (\ref{E-6}) and (\ref{E-7}) turns into equal as the radius vector goes beyond all compact sets of $\mathbb{R}_{+}^{|V|}$, which follows from Lemma \ref{L-2-4}. See Thurston's work \cite[Chap. 13]{Thurston} for details.
\end{itemize}

Due to Brouwer's theorem on invariance of domain, the first two clams imply that $Th(\mathbb{R}_{+}^{|V|})$ is an open set of $Y$. On the other hand, it follows from the third claim that $Th(\mathbb{R}_{+}^{|V|})$ is closed in $Y$. Because $Y$ is connected, we have $Th(\mathbb{R}_{+}^{|V|})=Y$.
\end{proof}

 A circle pattern $\mathcal P$ on a hyperbolic surface $(S,\mu)$ is called $G$-type, if there exists a geodesic triangulation $\mathcal{T}(\mu)$ of $(S,\mu)$ with the following properties: $(i)$ $\mathcal{T}(\mu)$ is isotopic to $\mathcal{T}$; $(ii)$ the vertices of $\mathcal{T}(\mu)$ coincide with the centers of the disks in $\mathcal P$.
\begin{proof}[\textbf{Proof of Theorem \ref{T-1-2}}]
 First we claim that there exists a hyperbolic metric $\mu$ on $S$ such that $(S,\mu)$  supports a $G$-type circle pattern $\mathcal P$ with the exterior intersection angles given by $\Theta$. By Proposition \ref{P-2-4}, it suffices to show that $\emph{O}=(0,0,\cdots,0)\in Y$. More precisely, we need to check that
\begin{equation}\label{E-8}
\sum_{(e,v)\in Lk(V_0)}\big(\Theta(e)-\pi\big)+2\pi\chi\big(CK(V_0)\big)\,<\,0,\; \forall\,V_0\subset V,\,  V_0\neq\emptyset.
\end{equation}

Without loss of generality, suppose that $CK(V_0)$ is connected and is homotopic to a surface of topological type $(g_0,l_0)$, where $g_0,l_0$ denotes the genus and the number of boundary components, respectively. Thus we have
\[
\chi(CK(V_0))\, =\, 2-2g_0-l_0.
\]

If $g_0\geq 1$ or $l_0\geq 2$, then $\chi(CK(V_0))\leq 0$,  (\ref{E-8}) naturally holds.

Let $g_0=0$, $l_0=1$. Then $CK(V_0)$ is simply-connected. And
\[
\chi(CK(V_0))\, =\, 2-2g_0-l_0\,=\,1.
\]
Assume that  $e_1,\cdots,e_s$ are the edges marked with vertices $v_1,\cdots,v_s$ such that $(e_i,v_i)\in Lk(V_0)$ for $i=1,\cdots, s$. Note that $e_1,\cdots,e_s$ form a \textbf{pseudo-Jordan} path bounding $CK(V_0)$. According to the condition, we have
\[
\sum\nolimits_{i=1}^s \Theta(e_i)\,<\,(s-2)\pi.
\]
It follows that
\[
\sum_{(e,f)\in Lk(V_0)}\big(\Theta(e)-\pi\big)+2\pi\chi(CK(V_0))\,=\,\sum_{i=1}^s \big(\Theta(e_i)-\pi\big)+2\pi\,<\,0.
\]

Meanwhile, the injectivity of $Th$  implies that $(\mu,\mathcal P)$ is unique up to isometry. The final step is to check that the above $G$-type circle pattern $\mathcal P$  actually realizes $G$ as the \textbf{primitive contact graph}, which has been prove by Zhou \cite{Zhou}.
\end{proof}

\section{Proofs of main results}

\subsection{Deformation circle pattern theorem} The major purpose of this paper is to prove Theorem \ref{T-1-5}. Our strategy is similar to He-Liu \cite{HL} and Liu-Zhou \cite{L-Z}. To be specific, it is a combination of Theorem \ref{T-1-2} and Rodin-Sullivan's trick \cite{RS}.

\begin{proof}[\textbf{Proof of Theorem \ref{T-1-5}}]
The proof is divided into two parts.

\textbf{\textsl{Existence part.}} For each $ I_\alpha^0\in\{\,I^0_1,\cdots,I_{|F|}^0\,\}$, let $h_\alpha: I_\alpha^0 \mapsto\mathbb{D}$ be the preassigned conformal structure. Regard the image region $h_\alpha(I_\alpha^0)$ as a bounded domain in the complex plane $\mathbb{C}$. Lay down the regular hexagonal circle packing  in $\mathbb{C}$, with each circle of radius $1/n$. By using the boundary component $h_\alpha(I_\alpha^0)$ like a cookie-cutter, we obtain a circle packing $\mathcal Q_{n,\alpha}$ which consists of all disks intersecting with the region $h_\alpha(I_\alpha^0)$. Let $K_{n,\alpha}$ denote the contact graph of $\mathcal {Q}_{n,\alpha}$.

Jointing these graphs $\{K_{n,\alpha}\}_{\alpha=1}^{|F|}$ to the original graph $G$ along the corresponding boundaries produces a triangular graph $G_n$. Let $E_n$ be the edge set of $G_n$. Define a weight function $\Theta_n: E_n\mapsto [0,\pi)$ by setting $\Theta_n(e)=\Theta(e)$, if $e$ belongs to $E$. Otherwise, set $\Theta_n(e)=0$. Note that $(G_n,\Theta_n)$ satisfies the conditions of Theorem \ref{T-1-2}. It follows that there exists a hyperbolic metric $\mu_n$ on $S$ such that $(S,\mu_n)$ supports a circle pattern with primitive contact graph $G_n$ and the exterior intersection angles given by $\Theta_n$. Discarding those disks associated to those new vertices, we obtain a circle pattern $\mathcal P_n$ realizing $(G,\Theta)$.

Due to Lemma \ref{L-2-4}, there exists an upper bound for the radius of every disk in $\mathcal P_n$ as $n$ varies. Let us extract a subsequence of $\{(\mu_n,\mathcal P_n)\}$ convergent to a pre circle pattern pair $(\mu_\infty,\mathcal P_\infty)$. For simplicity, we still denote it by $\{(\mu_n,\mathcal P_n)\}$.

By the following Proposition \ref{P-4-1}, $\mathcal P_n$ does not degenerate. Hence $(\mu_\infty,\mathcal P_\infty)$ is a true circle pattern pair realizing $(G,\Theta)$.  Rodin-Sullivan's consequence \cite{RS} implies there exists a mark-preserving conformal mapping between the $\alpha$-th interstice of $(\mu_\infty,\mathcal P_\infty)$ and $h_\alpha(I_\alpha^0)$. Namely, the interstices are equipped with the conformal structures asigned by $[h(G)]$. Setting $(\mu,\mathcal P)=(\mu_\infty,\mathcal P_\infty)$ then gives the desired circle pattern pair.

\textbf{ \textsl{Rigidity part.}} Suppose that there exist
two circle pattern pairs $(\mu,\mathcal P)$ and $(\tilde \mu,\tilde{\mathcal P})$ with the desired properties. Let $r, \tilde{r}$ be the radius vectors of $\mathcal P$ and  $\tilde{\mathcal P}$, respectively. We claim that $r=\tilde{r}$.

Otherwise, without loss of generality, assume that $v_0\in V$ satisfies
\[
\max_{v\in V}\Bigg\{\frac{\tanh(r(v)/2)}{\tanh(\tilde r(v)/2)}\Bigg\}
\,=\,\frac{\tanh(r(v_0)/2)}{\tanh(\tilde r(v_0)/2)}\,>\,1.
\]
From Lemma \ref{L-2-11}, it follows that
    \[
  k(v_0)(r)\,>\,k(v_0)(\tilde{r}).
    \]
On the other hand, note that both $r,\tilde{r}$ give the smooth circle pattern. That means
 \[
 k(v_0)(r)\,=\,k(v_0)(\tilde{r})\,=\,0,
 \]
which leads to a contradiction.

As a result, the pairs $(\mu,\mathcal P)$ and $(\tilde \mu,\tilde{\mathcal P})$ are isometric.
\end{proof}

\begin{proposition}\label{P-4-1}
As $n\to\infty$, no disk in $\mathcal P_n$ degenerates to a point.
\end{proposition}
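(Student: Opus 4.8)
The plan is to argue by contradiction and to trace a degeneration of one of the original disks back to a degeneration of an adjacent interstice, which the rigidity of the hexagonal filling will forbid. Recall from the existence argument (via Lemma~\ref{L-2-4}) that the radii of the disks of $\mathcal P_n$ are bounded above uniformly in $n$, so only a lower bound is in question. Suppose the proposition fails: after passing to a subsequence there is an original vertex $v_0\in V$ with $r_n(v_0)\to 0$. Since $G$ is the $1$-skeleton of the cellular decomposition $\mathscr D$, the vertex $v_0$ lies on the boundary of some $2$-cell; fix one and let $I_\alpha^{(n)}$ be the associated interstice of $\mathcal P_n$, bounded by the original disks $D_{v_1},\dots,D_{v_{m_\alpha}}$ with $v_1=v_0$. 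Regarding $I_\alpha^{(n)}$ together with these bounding disks as an $m_\alpha$-circle configuration, one of its radii tends to $0$.

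The geometric heart of the matter is that this forces $I_\alpha^{(n)}$ to pinch. This is precisely the content of Lemma~\ref{L-2-12}: once a single boundary radius of an $m$-circle configuration tends to $0$, the interstice collapses. I would apply it here to conclude that, as $n\to\infty$, the interstice $I_\alpha^{(n)}$ shrinks to a point.

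At the same time, $I_\alpha^{(n)}$ is, by construction, filled by the hyperbolic realization of the hexagonal packing $\mathcal Q_{n,\alpha}$, whose contact graph $K_{n,\alpha}$ is a chunk of the regular hexagonal lattice cut from the fixed region $h_\alpha(I_\alpha^0)$. The crucial point --- and the reason the contradiction is not circular --- is that I will not invoke the conformal conclusion of Rodin--Sullivan \cite{RS} (which is only legitimate once non-degeneracy is known), but rather their \emph{universal} packing estimates: the ring lemma, valid because the interior filling vertices all have degree $6$, shows that tangent filling disks have comparable radii with an absolute constant, and the length--area (bounded diameter) lemma then forces the filling to occupy a definite portion of $I_\alpha^{(n)}$. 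A filling by the fixed combinatorial pattern $K_{n,\alpha}$ therefore cannot be squeezed into a pinching interstice; this is the contradiction that rules out $r_n(v_0)\to 0$, so no original disk degenerates.

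The step I expect to be the main obstacle is the interface between Lemma~\ref{L-2-12} and the approximating family. Lemma~\ref{L-2-12} is proved for a family of configurations whose interstices carry \emph{one and the same} conformal structure, whereas the structures of the $I_\alpha^{(n)}$ only vary with $n$; bridging this requires replacing the conformal maps in that proof by mark-preserving quasiconformal maps with maximal dilatation tending to $1$ and rerunning the Montel--Rouch\'e dichotomy. More seriously, ``collapse to a point'' is a priori only a statement about Euclidean scale and is compatible with a fixed conformal type, so the genuine contradiction must be extracted at the level of the hexagonal filling: one has to quantify, via the length--area lemma and the fixed total area $4\pi(g-1)$ of $(S,\mu_n)$, that the $K_{n,\alpha}$-filling forces a uniform positive lower bound on the size of $I_\alpha^{(n)}$, hence on $r_n(v_0)$. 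Making this quantitative lower bound uniform in $n$ is the delicate part of the argument.
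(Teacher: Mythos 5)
Your proposal correctly begins the way the paper does (upper bounds on radii from Lemma \ref{L-2-4}, then Lemma \ref{L-2-12} to convert a shrinking disk into a pinching interstice), but the step you rest the contradiction on is not true. There is no principle saying that a hexagonal filling ``cannot be squeezed into a pinching interstice'': circle packings have no intrinsic scale. The Ring Lemma controls only \emph{ratios} of adjacent radii and the length--area lemma gives \emph{upper} bounds on radii; neither gives a lower bound on the absolute size of the filled region, and indeed the whole filling (whose combinatorics $K_{n,\alpha}$ is not fixed but gets finer as $n\to\infty$) simply shrinks along with the interstice. That pinched interstices are geometrically perfectly possible is shown by the \textbf{ideal} circle patterns of Theorem \ref{T-1-7}, where every interstice \emph{is} a point while the bounding disks keep positive radii. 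A related warning sign: your argument never invokes the hypothesis $\sum_{i=1}^s\Theta(e_i)<(s-2)\pi$ on pseudo-Jordan paths. Since the conclusion of Proposition \ref{P-4-1} fails when this strict inequality is relaxed to equality (that is exactly the ideal/degenerate regime), no proof that ignores this hypothesis can be correct; a purely local analysis at one interstice cannot work.

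The paper's proof is global rather than local. It takes $V_0$ to be the set of \emph{all} vertices whose disks degenerate, forms the cell neighborhood $CK(V_0)$ of topological type $(g_0,l_0)$, and splits into two cases. If $g_0>0$ or $l_0>1$, the collapse produces a short simple closed geodesic $\gamma_n$, and the Collar theorem forces the diameter of $(S,\mu_n)$ to blow up, contradicting the diameter bound coming from the uniform upper bound on all radii (here the fixed hyperbolic area plays its role through the geometry of the surface, not through the filling). If $g_0=0$ and $l_0=1$, then $CK(V_0)$ is simply connected and bounded by a pseudo-Jordan path $e_1,\dots,e_s$; since all disks and interstices inside collapse, the non-degenerating boundary disks come to meet at a single common point, forcing the angle sum $\sum_{i=1}^s\bigl(\pi-\Theta_n(e_i)\bigr)\to 2\pi$, i.e.\ $\sum_{i=1}^s\Theta(e_i)\to(s-2)\pi$, which contradicts the strict pseudo-Jordan inequality. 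Your instinct that the interface with Lemma \ref{L-2-12} is delicate (the interstices of $\mathcal P_n$ only converge to, rather than carry, the prescribed conformal structure) is a fair observation about the paper's own write-up, but repairing that does not rescue your argument: the contradiction must come from the topology of the full degenerating set and from the angle condition, not from the hexagonal filling.
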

\begin{proof}
Assume that the proposition is untrue. Let $V_0\subset V$ be the set of vertices whose disks degenerate to points. Suppose that $CK(V_0)$ is the union of those cells of $\mathscr{D}$ having at least one vertex in $V_0$. Assume that $CK(V_0)$ is connected and is homotopic to a surface of topological type $(g_0,l_0)$, where $g_0,\,l_0$ respectively denote the genus and the number of boundary components. Lemma \ref{L-2-12} implies that each interstice in $CK(V_0)$ degenerates to a point.

If $g_0>0$, or $l_0>1$, as the disks $\{\,D_v\,\}_{v\in V_0}$ degenerate to points, there exists a simple closed geodesic $\gamma_n$ on $(S,\mu_n)$ with the length $\ell(\gamma_n)$ tending to zero. Due to the Collar theorem \cite[Chap. 4]{Bu},  $\gamma_n$ gives an embedding cylinder domain $Cy(\gamma_n)$ in $(S,\mu_n)$ such that
\[
Cy(\gamma_n)\ =\ \big\{\,p\ \in\  (S,\mu_n)\,|\,\dist(p,\gamma_n)\ \leq\  d_n\,\big\},
\]
where $d_n>0$ satisfies
\[
\sinh\big(\ell(\gamma_n)/2\big)\sinh d_n\,=\,1.
\]

As $\ell(\gamma_n)\to 0$, the cylinder becomes so long  that the diameter of $\big(S,\mu_n\big)$ tends to infinity. However, because the radius of any disk in $\mathcal P_n$ is upper bounded, the diameter of $\big(S,\mu_n\big)$ is also upper bounded. This leads to a contradiction.

In case that $g_0=0,\,l_0=1$, then $CK(V_0)$ is simply-connected. Let $e_1,\cdots, e_s $ be the edges forming the  \textbf{pseudo-Jordan} path bounding $CK(V_0)$. According to the condition, we have
\begin{equation}\label{E-p}
\sum\nolimits_{i=1}^s\Theta_n(e_i)
\ = \ \sum\nolimits_{i=1}^s\Theta(e_i)\ <\ (s-2)\pi.
\end{equation}
On the other hand, let $V_\partial$ be the set of end vertices of $e_1,\cdots, e_s$. As the disks associated to the those vertices in $V_0$ degenerate, the interstices in $CK(V_0)$ degenerates to points. Consequently, the disks associated to the vertices in $V_\partial$ will meet at a common point. It follows that
\[
\sum\nolimits_{i=1}^s\Big(\pi-\Theta_n(e_i)\Big)\to 2\pi,
\]
which contradicts to (\ref{E-p}).

Thus the proposition is proved.
\end{proof}

\subsection{Ideal circle patterns} Observe that an \textbf{ideal} circle pattern is the limiting case of these circle patterns in the above part. Let us prove Theorem \ref{T-1-7} via Theorem \ref{T-1-5}.

\begin{proof}[\textbf{Proof of Theorem \ref{T-1-7}}]
 Assume that $\Theta: E\mapsto (0,\,\pi)$ is a function satisfying the conditions \textbf{(H1)} and \textbf{(H2)}. We define a family of weight functions $\Theta_{\varepsilon}$ by setting $\Theta_\varepsilon(e)=\Theta(e)-\varepsilon$ for each $e\in E$, where $\varepsilon$ is any positive number such that $\Theta_\varepsilon(e)\in(0,\pi)$. We check that
\[
\sum\nolimits_{i=1}^s\Theta_{\varepsilon}(e_l)<(s-2)\pi,
 \]
 whenever $e_1,e_2,\cdots,e_s$ form a \textbf{pseudo-Jordan} path in $S$. Pick one $[h(G)]=(h_1,\cdots,h_{|F|})\in \mathcal T_{G}$. Owing to Theorem \ref{T-1-5}, there exists a circle pattern pair $(\mu_\varepsilon,\mathcal P_\varepsilon)$ realizing $(G,\Theta_{\varepsilon})$ whose interstices are equipped with conformal structures assigned by $[h(G)]$.

 Let $I_{1,\epsilon},\cdots,I_{|F|,\varepsilon}$ be the interstices of $\mathcal P_{\varepsilon}$. For each $\alpha\in\{1,\cdots,|F|\}$, there exists the mark-preserving conformal mapping
 \[
\varphi_{\alpha,\varepsilon}: h_{\alpha}(I^0_\alpha)\mapsto I_{\alpha,\varepsilon}.
\]
 Because of Lemma \ref{L-2-11}, the radius of each disk in $\mathcal P_\varepsilon$ is upper bounded. As a result, we can extract a subsequence $\{(\mu_{\varepsilon_k},\mathcal P_{\varepsilon_k})\}$  convergent to a pair $(\mu_\ast,\mathcal P_\ast)$. Under the condition \textbf{(H2)}, similar argument to Proposition \ref{P-4-1} implies that no disk in $\mathcal P_\ast$ degenerates. Meanwhile, due to Mentel's theorem, a convergent subsequence of $\{\varphi_{\alpha,\varepsilon_k}\}$ exists. For simplicity, we still denote it by $\{\varphi_{\alpha,\varepsilon_k}\}$.

To show that every interstice of $\mathcal P_\ast$ consists of a point,
we claim that
\[
\varphi_{\alpha,\varepsilon_k}\,\to\, \text{const}.
\]
Otherwise, Rouch\'{e}'s theorem implies that
\[
\varphi_{\alpha,\varepsilon_k}\,\to\, \phi_{\alpha},
\]
where $\phi_{\alpha}$ is a mark-preserving conformal map. Then the boundary of the $\alpha$-th interstice of $\mathcal P_\ast$ is non-empty and forms a curved polygon, which is called the interstice polygon. Replacing each side of the interstice polygon with geodesic segment, we obtain a hyperbolic polygon, called the rectified polygon.
See FIGURE 5.

\begin{figure}[htbp]\centering
\includegraphics[width=0.95\textwidth]{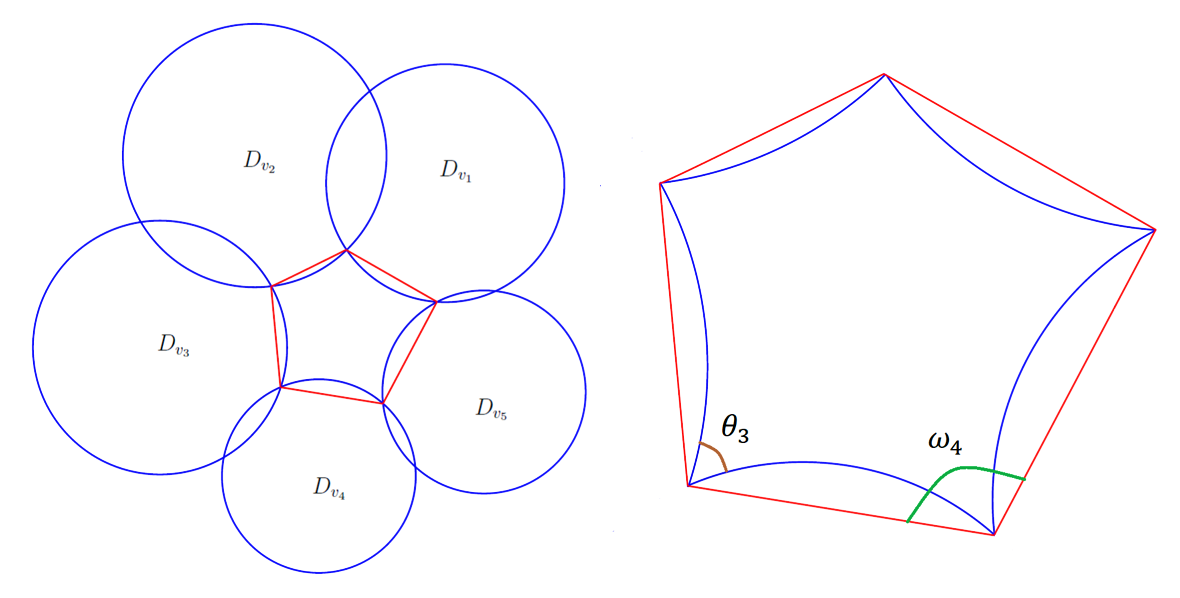}
\caption{Interstice polygon and rectified polygon}
\end{figure}

Let $D_{v_1},\cdots D_{v_m}$, in counter-clockwise order, be all the  disks adjacent to the above interstice. For $i=1,\cdots,m$, assume that $\theta_i$ is the $i$-th inner angle of the interstice polygon. Here the subscript $i$ means that the angle is at the endpoint in $D_{v_i}\cap D_{v_{i+1}}$ (set $v_{m+1}=v_1$). Define the $i$-th inner angle $w_i$ of the rectified polygon similarly. Observe that
\[
\theta_i<w_i
\]
and
\[
\theta_i\,=\,\lim_{k\to\infty}\big(\,\Theta(e_i)-\varepsilon_k\,\big)
\,=\,\Theta(e_i),
\]
where $e_i\in E$ is the edge between $v_i$ and $v_{i+1}$.  Combining with the Gauss-Bonnet formula, we derive that
\[
\sum\nolimits_{i=1}^m\Theta(e_i)\,=\,\sum\nolimits_{i=1}^m\theta_i\ <\sum\nolimits_{i=1}^m \omega_i\,<\, (m-2)\pi.
\]
On the other hand, according to condition \textbf{(H1)},
\[
\sum\nolimits_{i=1}^m\Theta(e_i)\,=\,(m-2)\pi.
\]
This leads to a contradiction.

Setting $(\mu,\mathcal P)=(\mu_\ast,\mathcal P_\ast)$,  we check that it is an \textbf{ideal } circle pattern pair with \textbf{primitive contact graph} $G$ and the exterior intersection angles given by $\Theta$.

For the rigidity part, suppose that $(\mu,\mathcal P)$ and $(\tilde{\mu}, \tilde{\mathcal P})$ are two ideal circle pattern pairs realizing $(G,\Theta)$. Owing to Lemma \ref{L-2-13}, similar argument to proof of Theorem \ref{T-1-5} implies that $(\mu,\mathcal P)$ and $(\tilde{\mu}, \tilde{\mathcal P})$ are isometric.
\end{proof}

\subsection{Packable surfaces}In this part some knowledge on Teichm\"{u}ller theory is needed. The readers can refer to  \cite{Ahl,Ima} for detailed backgrounds. Let $T(S)$ be the Teichm\"{u}ller space of the compact oriented surface $S$ of genus $g>1$. Then $T(S)$ parameterizes the equivalence classes of marked hyperbolic metrics on $S$. Here a marking is an isotopy class of orientation-preserving homeomorphism from $S$ to itself. And two marked hyperbolic metrics $\mu,\tilde{\mu}\in \mathcal T(S)$ are equivalent if there exists an isometry $\phi: (S,\mu)\mapsto(S,\tilde{\mu})$ isotopic to the identity map.

\begin{proof}[\textbf{Proof of Theorem \ref{T-1-7}}]
For any $\mu\in T(S)$, by adding proper disks into $(S,\mu)$, it is not hard to see that  $(S,\mu)$  supports a circle packing $\mathcal P$ whose contact graph is isomorphic to the 1-skeleton of a cellular decomposition of $S$. Let $I_1,\cdots,I_{|F|}$ denote all the interstices of $\mathcal P$. For each $\alpha$, lifting the inclusion $\imath_\alpha:I_\alpha\mapsto (S,\mu)$ to the universal cover $\mathbb D$ gives a map $h_\alpha: I_\alpha\mapsto \mathbb D$.

The following process is similar to the proof of Theorem \ref{T-1-5}. Lay down the regular hexagonal circle packing  in $\mathbb{C}$, with each circle of radius $1/n$. By using the boundary component $h_\alpha(I_\alpha)$ like a cookie-cutter, we obtain a circle packing $\mathcal Q_{n,\alpha}$ which consists of all disks intersecting with the region $h_\alpha(I_\alpha^0)$.

For $\alpha=1,2,\cdots,|F|$,  jointing the contact graph $K_{n,\alpha}$ of $\mathcal {Q}_{n,\alpha}$ to the original graph $G$ along the corresponding
boundaries, in this way we obtain a triangular graph. By Theorem \ref{T-1-1}, similar argument to the proof of Theorem \ref{T-1-5} shows that there exists a hyperbolic metric $\mu_n$ on $S$ such that $(S,\mu_n)$ supports a circle packing $\mathcal P_n$ realizing $G$.

For $\{(\mu_n,\mathcal P_n)\}$, let us extract a subsequence $\{(\mu_{n_k},\mathcal P_{n_k})\}$ convergent to  $(\mu_\infty,\mathcal P_\infty)$. Due to Proposition \ref{P-4-1},  $(\mu_\infty,\mathcal P_\infty)$ is a non-degenerating  pair with contact graph $G$.
Moreover,  Rodin-Sullivan's consequence \cite{RS} implies that the interstices of $\mathcal P_\infty$ and $\mathcal P$ share the same conformal structures. By the rigidity part of Theorem \ref{T-1-5}, we have $\mu_\infty=\mu$.

In summary, there exists a sequence $\{\mu_{n_k}\}$ convergent to $\mu$ such that each $(S,\mu_{n_k})$ is packable. Thus the theorem is proved.
\end{proof}

\section{Appendix 1: several results on univalent functions}
In this appendix, we collect some results on univalent functions.

Let $f$ be a holomorphic function in $\mathbb{D}$ such that $|f(z)|< 1$. Set
\[
M(f ; z):\ =\frac{|f'(z)|(1-|z|^2)}{1-|f(z)|^2}.
\]

Here is a list of properties of this quantity.
\begin{itemize}
\item[$(i)$] For any $\beta$ in the isometry group  $Aut(\mathbb{D})$ of  $\mathbb{D}$,
\begin{equation}\label{E-12}
M(\beta ; z)\ \equiv \ 1;
\end{equation}
\item[$(ii)$] For any $\beta,\alpha\in Aut(\mathbb{D})$,
\begin{equation}\label{E-13}
M(\beta\circ f\circ\alpha ; z)\ =\  M(f ; \alpha(z));
\end{equation}
\item[$(iii)$] Suppose that $f$  is invertible. Denote $w = f(z)$. Then
\begin{equation}\label{E-14}
    M(f ; z)M(f^{-1} ; w)\ \equiv \ 1.
\end{equation}
\end{itemize}

In addition, the following two formulas will be helpful.
\begin{equation}\label{E-15}
 \Delta\log\frac{1}{1-|z|^2}\ =\ \bigg(\frac{2}{1-|z|^2}\bigg)^2.
 \end{equation}
 \begin{equation}\label{E-16}
 \Delta\log\frac{|f'(z)|}{1-|f(z)|^2}\ =\ \bigg(\frac{2|f'(z)|}{1-|f(z)|^2}\bigg)^2.
\end{equation}

The lemma below was due to  Schiffer-Hawley \cite{S-N}. It plays an important role in He-Liu's work \cite{HL}.
\begin{lemma}\label{L-9-1}
Suppose $\Omega\,(resp.\,\widetilde{\Omega})\subset \mathbb{C}$ is a domain bounded by $m$ close analytic curves $C_\tau\,(\text{resp}.\,\tilde{C}_\tau)$ $(\tau=1,\cdots,m)$, now $w=f(z)$ is a univalent conformal mapping from $\Omega$ to $\widetilde{\Omega}$ and carries the curve system $C_\tau$ to corresponding curve system curves $\tilde{C}_\tau$. We assume $f(z)$
is analytic in the closed domain $\Omega \cup\partial \Omega$, where $\partial \Omega=\cup C_\tau$. Suppose that $z=z(s)$ and $w=w(\sigma)$ are parametric representation in terms of their arc length, then we have the following formula:
$$\frac{\partial}{\partial n}\log{|f'(z)|}\ =\ k(s)-\tilde k(\sigma)|f'(z)|$$
where the operator $\partial/\partial n$ denotes differential with respect to the exterior normal on the boundary curves $C=\cup C_\tau$, and $k(s),\tilde k(\sigma)$ are curvatures of curve $C=\cup C_\tau$ and $\tilde{C}=\cup \tilde{C}_\tau$
in corresponding parameter point.
\end{lemma}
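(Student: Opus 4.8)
The plan is to derive the identity from the transformation law of the unit tangent under a conformal map, combined with the Cauchy--Riemann equations for a holomorphic branch of $\log f'$. Since $f$ is univalent and analytic on $\Omega\cup\partial\Omega$ and each $C_\tau$ is an analytic curve, $f'$ is finite and non-vanishing on $\partial\Omega$; hence near each boundary arc the function $\log f'(z)=\log|f'(z)|+i\arg f'(z)$ admits a single-valued holomorphic branch, whose real part $\log|f'|$ is harmonic with harmonic conjugate $\arg f'$. Throughout I fix an orientation of every $C_\tau$ so that $\Omega$ lies on one prescribed side, and I let $\partial/\partial n$ denote differentiation along the exterior normal; the corresponding arc-length parameters are $s$ on $C$ and $\sigma$ on $\tilde C$.

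First I would record how arc length and tangent directions transform. Parametrizing $C$ by arc length, $z=z(s)$ with $|z'(s)|=1$, the image curve satisfies $w(\sigma(s))=f(z(s))$, and differentiating gives $w'(\sigma)\,\tfrac{d\sigma}{ds}=f'(z)\,z'(s)$. Taking absolute values and using $|w'(\sigma)|=1$ yields the arc-length distortion $\tfrac{d\sigma}{ds}=|f'(z(s))|$. Taking arguments gives
\[
\arg w'(\sigma)\;=\;\arg f'(z(s))+\arg z'(s)
\]
up to an additive constant, so the tangent angle of the image curve equals that of the source curve plus $\arg f'$.

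Next I would pass to curvatures. Since the signed geodesic curvature is the rate of change of the tangent angle with respect to arc length, differentiating the previous display in $s$ and invoking $d\sigma/ds=|f'|$ gives
\[
\tilde k(\sigma)\,|f'(z)|\;=\;k(s)+\frac{d}{ds}\arg f'\big(z(s)\big).
\]
It then remains to identify the tangential derivative $\tfrac{d}{ds}\arg f'(z(s))$ with a normal derivative of $\log|f'|$. Because $\log|f'|$ and $\arg f'$ form a harmonic-conjugate pair, the Cauchy--Riemann equations in boundary-adapted form read $\partial_s\arg f'=-\partial_n\log|f'|$, the sign being fixed by the chosen boundary orientation and the direction of the exterior normal. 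Substituting this relation into the curvature identity and rearranging produces exactly
\[
\frac{\partial}{\partial n}\log|f'(z)|\;=\;k(s)-\tilde k(\sigma)\,|f'(z)|.
\]

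The main obstacle is the sign bookkeeping rather than any deep estimate. Everything hinges on a single consistent choice of boundary orientation, of the sign convention for geodesic curvature, and of the sense of the exterior normal, so that the conjugate-derivative relation $\partial_s\arg f'=-\partial_n\log|f'|$ carries precisely the minus sign needed to match the statement; an inconsistent choice would flip a sign in the final formula. The analytic input—that $f'$ is finite and non-vanishing on $\partial\Omega$ and that all boundary derivatives exist—is exactly what the hypotheses that $f$ is analytic up to the closure and that the $C_\tau$ are analytic curves guarantee, and this is what legitimizes differentiating $\arg f'$ along the boundary and applying the Cauchy--Riemann relation there.
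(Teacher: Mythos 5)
The paper never proves Lemma \ref{L-9-1}; it is quoted from Schiffer--Hawley \cite{S-N}, so there is no internal proof to compare against, and your argument has to stand on its own and square with the way the lemma is later applied (equation (\ref{E-18}) in the proof of Lemma \ref{L-2-9}). Your outline is the natural one, and its first steps are sound: the arc-length distortion $d\sigma/ds=|f'|$, the additivity of tangent angles, and differentiation giving $\tilde k(\sigma)\,|f'|=k(s)+\tfrac{d}{ds}\arg f'(z(s))$, valid once a boundary orientation is fixed and $k,\tilde k$ are read as arc-length derivatives of the tangent angle under that orientation.

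The genuine gap is in the last step, and it sits exactly where the content of the lemma lives: the sign. You assert the boundary Cauchy--Riemann relation $\partial_s\arg f'=-\partial_n\log|f'|$ with ``the sign being fixed by the chosen boundary orientation,'' but you never fix the orientation --- the sign is in effect chosen so that the conclusion matches the statement, which is circular. Moreover, the two ingredients you combine are inconsistent under the standard choice: if the boundary is positively oriented (domain on the left of the tangent), so that a circle bounding a disk of radius $\rho$ has curvature $+1/\rho$, then the correct conjugate relation is $\partial_s\arg f'=+\partial_n\log|f'|$ (exterior normal), and your own curvature identity then yields $\partial_n\log|f'|=\tilde k\,|f'|-k$, the negative of the claim. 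A concrete check: for $f(z)=(z-a)/(1-az)$, $0<a<1$, mapping the unit disk to itself, at $z=1$ one computes $\partial_n\log|f'|=2a/(1-a)>0$, while $k-\tilde k\,|f'|=1-\tfrac{1+a}{1-a}=-2a/(1-a)<0$. The repair is to commit, throughout, to the opposite orientation (domain on the right of the tangent): then your minus sign in the conjugate relation is correct, the derivation closes, and the resulting $k,\tilde k$ are counted positive precisely when the boundary bends away from the domain. That is in fact the convention the paper needs --- the interstice lies outside the circles $C_l$, which is why $k=\tilde k=1/\mathbbm{r}_l>0$ in (\ref{E-18}) --- but your proof, as written, establishes the formula only up to this unresolved sign convention, and for this lemma the sign is the theorem.
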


\section{Appendix 2: Rigidity lemma}
 Recall that the interstices $I,\tilde{I}$ of two $m$-circle configurations $\mathcal P,\tilde{\mathcal P}$ are equipped with the same conformal structure if and only if there exists a mark-preserving conformal mapping $\phi$ between them.

\begin{proof}[\textbf{Proof of Lemma \ref{L-2-9}}]
We consider the function $H(\phi ; z):=\log M(\phi;z)$.
Suppose it attains minimum at $z_0$. There are four cases to distinguish.

$(\textrm{I})$ If $z_0$ belongs to the interior of $I$, we have
 \[
 \Delta H(\phi ; z_0)\ \geq \ 0.
 \]
 Note that
 \[
H(\phi ; z_0)\,=\,\log\frac{|\phi'(z_0)|(1-|z_0|^2)}{1-|\phi(z_0)|^2}\,=\,\log\frac{|\phi'(z_0)|}{1-|\phi(z_0)|^2}-\log\frac{1}{1-|z_0|^2}.
\]
 From (\ref{E-15}) and (\ref{E-16}), it follows that
\[
\bigg(\frac{2|\phi'(z_0)|}{1-|\phi(z_0)|^2}\bigg)^2-\bigg(\frac{2}{1-|z_0|^2}\bigg)^2\ \geq \ 0,
\]
which implies
\[
 M(\phi;z_0)\ \geq \ 1.
\]
Since $z_0$ is the minimal point of $H(\phi ; z)$,
\[
 H(\phi;z)\ \geq \, H(\phi ; z_0)\ =\ \log M(\phi;z_0)\ \geq\ 0.
\]
As a result,
\[
 M(\phi;z)\ \geq \ 1.
\]

$(\textrm{II}).$ If $z_0$ is in the boundary of $I$, we assume $z_0$ locates in the $l$-th circle $C_l$. By compositing proper hyperbolic isometries $\beta,\alpha\in Aut(\mathbb{D})$, we may assume that $C_l,\tilde C_l$ are the same circle centering at the origin with Euclidean radius $\displaystyle\mathbbm{r}_l=\tanh(r_l/2)=\tanh(\tilde r_l/2)$. Owing to (\ref{E-13}),  for simplicity, we still use  $H(\phi;\cdot)$ instead of $H(\beta\circ\phi\circ\alpha;\cdot)$. Since  $z_0$ and $\phi(z_0)$ belong to $C_l$ and $\tilde C_l$, respectively, we have
\begin{equation}\label{E-17a}
|z_0|\,=\,|\phi(z_0)|\,=\,\mathbbm{r}_l.
\end{equation}
Because $z_0$ is the minimal point, we obtain
\begin{equation}\label{E-17}
\frac{\partial}{\partial n_l}H(\phi;z_0)\ \leq \ 0.
\end{equation}
It follows from Lemma \ref{L-9-1} that
\begin{equation}\label{E-18}
\frac{\partial}{\partial n_l}\log{|\phi'(z_0)|}\ =\ \frac{1}{\mathbbm{r}_l} \ -\frac{1}{\mathbbm{r}_l}|\phi'(z_0)|.
\end{equation}
A direct calculation gives
\begin{equation}\label{E-19}
\frac{\partial}{\partial n_l}\log(1-|z_0|^2)\ = \ \frac{2|z_0|}{1-|z_0|^2},
\end{equation}
\begin{equation}\label{E-20}
\Big|\frac{\partial}{\partial n_l}\log(1-|\phi(z_0)|^2)\Big|\ \leq \ \frac{2|\phi(z_0)||\phi'(z_0)|}{1-|\phi(z_0)|^2}.
\end{equation}
Note that
\[
\frac{\partial}{\partial n_l}H(\phi;z_0)\,=\,\frac{\partial}{\partial n_l}\log{|\phi'(z_0)|}+\frac{\partial}{\partial n_l}\log(1-|z_0|^2)-\frac{\partial}{\partial n_l}\log(1-|\phi(z_0)|^2).
\]
From (\ref{E-17a}), (\ref{E-17}),(\ref{E-18}),(\ref{E-19}),(\ref{E-20}), we deduce that
\[
|\phi'(z_0)|\ \geq \ 1.
\]
Consequently,
\[
 M(\phi;z)\, \geq\, M(\phi;z_0)\,\geq\, 1.
\]

$(\textrm{III}).$ If $z_0$ is an intersection point of two adjacent circles $C_j,C_{j+1}$ meeting in angle $\Theta_j \neq0$. First, we extend the conformal map $\phi$ by reflections across $C_j$ and $C_{j+1}$.  Repeat this procedure until the exterior normal vector $\textbf{n}_j$ and
 $\textbf{n}_{j+1}$ of $C_j$ and $C_{j+1}$ at $z_0$ belong to the domain of the extended conformal mapping. For simplicity, we still denote it by $\phi$. We have
 \[
 \frac{\partial}{\partial \vec{v}}H(\phi;z_0)\ \leq 0,
 \]
 where $\vec{v}$ denotes the unit vector bisecting the angle between $\textbf{n}_j$ and $\textbf{n}_{j+1}$. A simple computation gives
 \[
 \frac{\partial}{\partial \vec{v}}H(\phi;z_0)\,=\, \frac{1}{2\sin(\Theta_j/2)}\bigg(\,\frac{\partial}{\partial n_j}H(\phi;z_0)+\frac{\partial}{\partial n_{j+1}}H(\phi;z_0)\,\bigg).
 \]
That means
\[
\frac{\partial}{\partial n_j}H(\phi;z_0)\leq 0,
\]
or
\[
\frac{\partial}{\partial n_{j+1}}H(\phi;z_0)\leq 0.
\]
We transform this case into $(\textrm{II})$.

$(\textrm{IV}).$ If $z_0$ is an intersection point of two adjacent circles $C_j,C_{j+1}$ meeting in angle $\Theta_j =0$. By compositing  proper hyperbolic isometries, we assume that $z_0=\phi(z_0)=0$. Then $C_j, C_{j+1}$ (resp. $\tilde{C}_j,\tilde{C}_{j+1}$) tangent at the origin. Thus
 \[
 M(\phi,z_0)=|\phi'(0)|.
 \]
 According to the definition,
 \[
 \phi'(0)=\lim_{z\to 0}\phi(z)/z=\lim_{z\to 0}\frac{1/z}{1/\phi(z)}=\lim_{w\to\infty}w/g(w),
 \]
 where $w=1/z$ and $g(w)=1/\phi(1/w)$.

After the map $w=1/z$, the circles $C_j, C_{j+1}$  become a pair of parallel horizontal lines with width (in Euclidean sense)
 \[
 \frac{1}{2}\Bigg(\,\frac{1}{\tanh (r_j/2)}+\frac{1}{\tanh(r_{j+1}/2)}\,\Bigg).
 \]
 Similarly, $\tilde{C}_j,\tilde{C}_{j+1}$ become a pair of parallel lines with width
 \[
 \frac{1}{2}\Bigg(\,\frac{1}{\tanh(\tilde r_j/2)}+\frac{1}{\tanh(\tilde r_{j+1}/2)}\,\Bigg).
 \]
Because $r_j=\tilde r_j$ and $r_{j+1}=\tilde r_{j+1}$,  a routine calculation gives
 \[
 \lim_{w\to\infty}w/g(w)=\bigg(\frac{1}{\tanh (r_j/2)}+\frac{1}{\tanh(r_{j+1}/2)}\bigg)\bigg/\bigg(\frac{1}{\tanh(\tilde r_j/2)}+\frac{1}{\tanh(\tilde r_{j+1}/2)}\bigg)=1,
 \]
which implies that
 \[
 M(\phi;z)\ \geq \ 1.
 \]

To summarise, we always have
\[
M(\phi;z)\ \geq \ 1, \forall\, z\in I.
\]
Similarly,
 \[
 M(\phi^{-1};w)\ \geq \ 1, \forall\, w\in \tilde{I}.
 \]
 From (\ref{E-14}), it follows that
 \[
 M(\phi;z)\ = \ 1, \forall\; z\in I.
 \]

Let  $\displaystyle ds$ and $d\tilde{s}$ be the hyperbolic metrics on $I,\tilde {I}$, respectively. The above fact implies that $\displaystyle \phi^{\star}d\tilde{s}=ds$. We thus finish the proof.
 \end{proof}

\section{Appendix 3: Maximal principle lemma}
The main purpose of this appendix is to show Lemma \ref{L-2-10}.

\begin{proof}[\textbf{Proof of Lemma \ref{L-2-10}}]
There is a mark-preserving conformal mapping $\phi: I\mapsto \tilde{I}$ between the interstices of $\mathcal P$ and $\mathcal P$. As illustrated in FIGURE 6, we divide $\vartheta_i$ into three parts $\vartheta_{i0},\vartheta_{i1},\vartheta_{i2}$. Namely,
\[
\vartheta_i\, =\, \vartheta_{i0}+\vartheta_{i1}+\vartheta_{i2}.
\]
Similarly, set
\[
\tilde\vartheta_i \,=\, \tilde\vartheta_{i0}+\tilde\vartheta_{i1}+\tilde\vartheta_{i2}.
\]
Note that
\[
\max_{1\leq \tau\leq m}\Bigg\{\frac{\tanh(r_\tau/2)}{\tanh(\tilde r_\tau/2)}\Bigg\}
\,=\,\frac{\tanh(r_i/2)}{\tanh(\tilde r_i/2)}\,>\,1.
\]
By Lemma \ref{L-2-7}, it is not hard to see
\begin{equation}\label{E-21}
\vartheta_{i1}\,<\,\tilde\vartheta_{i1},\quad \vartheta_{i2}\,<\,\tilde\vartheta_{i2}.
\end{equation}

\begin{figure}[htbp]\centering
\includegraphics[width=0.7\textwidth]{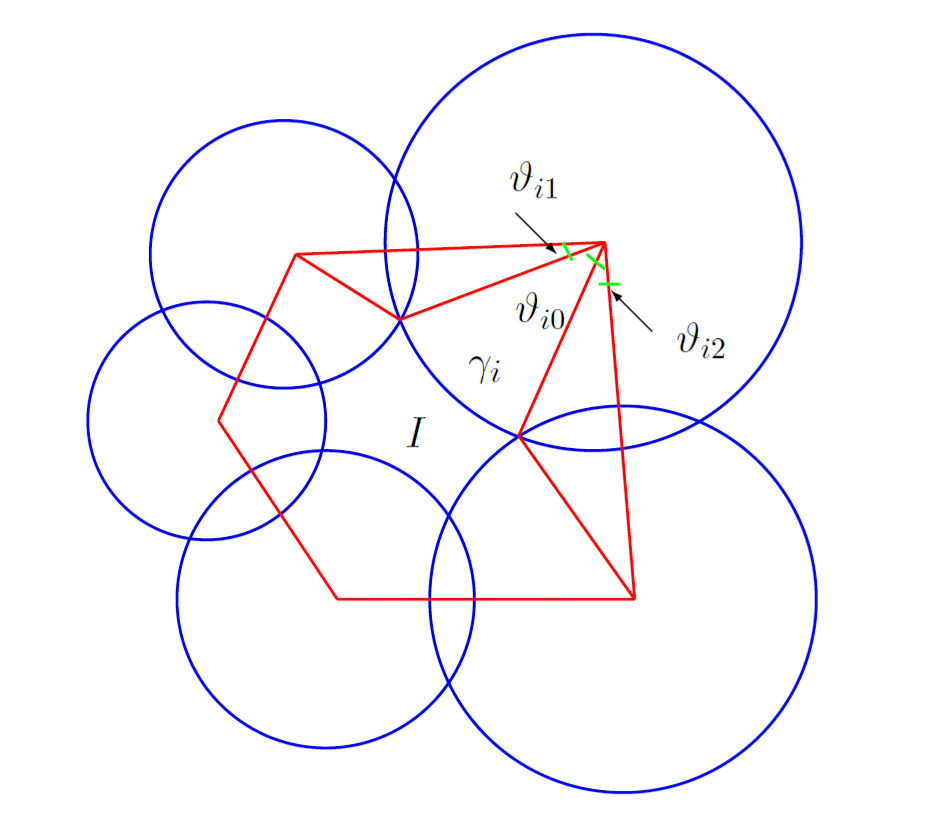}
\caption{}
\end{figure}

It suffice to show that $\vartheta_{i0}<\tilde\vartheta_{i0}$. To achieve the goal, let us consider $M(\phi;z)$. Suppose $M(\phi;z)$ attains its minimum at a point. Using similar argument to the proof of Lemma \ref{L-2-9}, we obtain
\begin{equation}\label{E-22}
M(\phi;z)\ \geq \ \frac{\tanh(\tilde r_i/2)}{\tanh(r_i/2)}.
\end{equation}
Actually, in case $(\textrm{I})$, we have
\[
M(\phi;z)\,\geq\,1.
\]
In case $(\textrm{II})$ or $(\textrm{III})$, we derive
\[
M(\phi;z)\,\geq\, \min_{1\leq\tau\leq m}\Bigg\{\frac{\tanh( \tilde{r}_\tau/2)}{\tanh( r_\tau/2)}\Bigg\}\,=\,\frac{\tanh(\tilde r_i/2)}{\tanh(r_i/2)}.
\]
In case $(\textrm{IV})$, we deduce
\[
M(\phi;z)\,\geq\,\bigg(\frac{1}{\tanh (r_j/2)}+\frac{1}{\tanh(r_{j+1}/2)}\bigg)\bigg/\bigg(\frac{1}{\tanh(\tilde r_j/2)}+\frac{1}{\tanh(\tilde r_{j+1}/2)}\bigg),
\]
which also implies (\ref{E-22}).

Let us integrate $ds$ (resp. $d\tilde{s}$) along the $i$-th side $\gamma_i$ (resp. $\tilde \gamma_i$) boundary of $I$ (resp. $\tilde{I}$). Note that $\displaystyle \phi^{\star} d\tilde{s}\ = \ M(\phi;z)ds$. Due to (\ref{E-22}), we obtain
\begin{equation}\label{E-23}
\int_{\tilde\gamma_i}\ \mathrm{d}\tilde{s}\ \geq \ \frac{\tanh(\tilde r_i/2)}{\tanh(r_i/2)}\int_{\gamma_i}\ \mathrm{d}s.
\end{equation}
Moreover, a direct calculation gives
\[
\sinh r_i \ \vartheta_{i0}\ =\ \int_{\gamma_i}\ \mathrm{d}s,
\]
and
\[
\sinh\tilde r_i \ \tilde\vartheta_{i0}\ =\ \int_{\tilde\gamma_i}\ \mathrm{d}\tilde{s}.
\]
It follows that
\begin{equation}\label{E-24}
\vartheta_{i0}\,\leq\,\frac{\cosh^2(\tilde{r}_i/2)}{\cosh^2(r_i/2)}\tilde{\vartheta}_{i0}\,<\,\tilde{\vartheta}_{i0}.
\end{equation}
Combining  (\ref{E-21}) and (\ref{E-24}), we show that
\begin{equation}\label{E-25}
\vartheta_i \, <\,\tilde\vartheta_i.
\end{equation}
Thus the lemma is proved.
\end{proof}

\begin{proof}[\textbf{Proof of Lemma \ref{L-2-11}}]
  Following the above method, we write $\vartheta_i$ as $\vartheta_i = \vartheta_{i0}+\vartheta_{i1}+\vartheta_{i2}$. By Lemma \ref{L-2-8}, as $r_i\to+\infty$, we have
   \[
  \vartheta_{i1}\to 0,\quad \vartheta_{i2}\to 0.
  \]
  Moreover, it follows from (\ref{E-24}) that
  \[
  \vartheta_{i0}\to 0
  \]
   as $r_i\to +\infty$. We finish the proof.
\end{proof}

\begin{proof}[\textbf{Proof of Lemma \ref{L-2-13}}]
 Observe that an ideal multi-circle configuration can be divided into parts, where each part is a two-circle configuration (see FIGURE 7). Thus the lemma follows from Lemma \ref{L-2-5}, Lemma \ref{L-2-7} and Lemma \ref{L-2-8}.
\end{proof}

\begin{figure}[htbp]\centering
\includegraphics[width=0.6\textwidth]{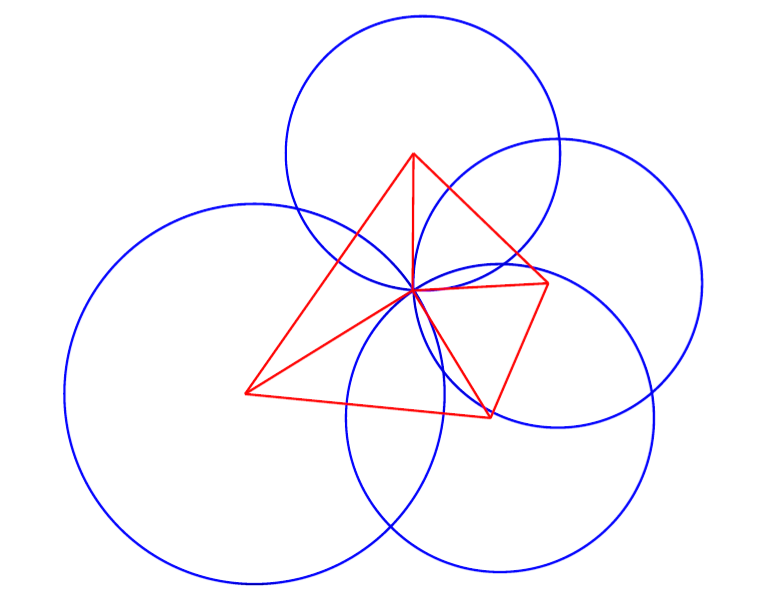}
\caption{An ideal 4-circle configuration}
\end{figure}
\section{Acknowledgement}

The author  would like to thank J. Liu for encouragement and helpful comments. He also thanks to L. Liu  for the help of  FIGURE 6. Part of this work was done when the author was visiting the Department of Mathematics, Rutgers University, he would like to thank for its hospitality. He also would like to thank the China Scholarship Council for financial support (File No. 201706135016).

\end{document}